\theoremstyle{plain}
\newtheorem{proposition}{Proposition}[section]
\newtheorem{theorem}[proposition]{Theorem}
\newtheorem{lemma}[proposition]{Lemma}
\newtheorem{corollary}[proposition]{Corollary}
\newtheorem{question}[proposition]{Question}
\theoremstyle{definition}
\newtheorem{definition}[proposition]{Definition}
\newtheorem{observation}[proposition]{Observation}
\theoremstyle{remark}
\newtheorem{remark}[proposition]{Remark}
\DeclareMathOperator{\Aut}{Aut}
\DeclareMathOperator{\supp}{supp}
\DeclareMathOperator{\Euc}{Euc}
\DeclareMathOperator{\dist}{d}
\DeclareMathOperator{\Leb}{Leb}
\DeclareMathOperator{\Cc}{\mathcal{C}}
\DeclareMathOperator{\Hc}{\mathcal{H}}
\DeclareMathOperator{\Lc}{\mathcal{L}}
\DeclareMathOperator{\Oc}{\mathcal{O}}
\DeclareMathOperator{\Bb}{\mathbb{B}}
\DeclareMathOperator{\Cb}{\mathbb{C}}
\DeclareMathOperator{\Db}{\mathbb{D}}
\DeclareMathOperator{\Nb}{\mathbb{N}}
\DeclareMathOperator{\Rb}{\mathbb{R}}
\DeclareMathOperator{\Zb}{\mathbb{Z}}
\DeclareMathOperator{\Bf}{\mathsf{K}}
\DeclareMathOperator{\Bfd}{\mathsf{k}}
\DeclareMathOperator{\Gf}{\mathsf{G}}
\DeclareMathOperator{\wsq}{\mathsf{wsq}}
\DeclareMathOperator{\sq}{\mathsf{sq}}
\DeclareMathOperator{\Levi}{\mathscr{L}}
\newcommand{\abs}[1]{\left|#1\right|}
\newcommand{\norm}[1]{\left\|#1\right\|}
\newcommand{\wt}[1]{\widetilde{#1}}
\newcommand{\ip}[1]{\left\langle #1\right\rangle}
\begin{document}

\title{Weakly holomorphic homogeneous  regular manifolds}
\author{Andrew Zimmer}\address{Department of Mathematics, University of Wisconsin, Madison, WI, USA}
\email{amzimmer2@wisc.edu}
\date{\today}
\keywords{}
\subjclass[2010]{}

\begin{abstract} We introduce a class of complex manifolds which we call weakly holomorphic homogeneous regular manifolds (wHHR) manifolds.  As the name suggests, this class contains the so-called holomorphic homogeneous regular manifolds but also other classes of complex manifolds such as two dimensional finite type domains and simply connected K\"ahler manifolds with pinched negative sectional curvature. For wHHR Stein manifolds we prove that the Bergman and Kobayashi metrics are biLipschitz equivalent.   \end{abstract}

\maketitle

\section{Introduction}

The Kobayashi and Bergman metrics are in general not uniformly biLipschitz on a complex manifold (even for domains in complex Euclidean space). However, for certain special classes of complex manifolds they are, such as
\begin{enumerate}
\item finite type domains in $\Cb^2$~\cite{Catlin1989}, 
\item simply connected negatively curved complete K\"ahler manifolds~\cite{WY2020},
\item the holomorphic homogeneous regular (\textsf{HHR}) manifolds introduced by Liu--Sun--Yau~\cite{LSY2004a} and independently by Yeung~\cite{Y2009}, which include:
\begin{enumerate}
\item strongly pseudoconvex domains,
\item Kobayashi hyperbolic convex domains, and
\item Teichm\"uller spaces.
\end{enumerate}
\end{enumerate}

The purpose of this paper is to introduce a new class of non-compact complex manifolds which contains all of the above families and where the Kobayashi and Bergman metrics are uniformly biLipschitz.

We begin by recalling the definition of holomorphic homogeneous regular manifolds.

\begin{definition}\cite{LSY2004a,Y2009} A complex $m$-manifold $M$ is a \emph{holomorphic homogeneous regular (\textsf{HHR}) manifold}  if there exists $s>0$ such that: for every $\zeta \in M$ there is a holomorphic embedding $f: M \rightarrow \Cb^m$ with $f(\zeta)=0$ and
\begin{align*}
s \Bb^m \subset f(M) \subset \Bb^m,
\end{align*}
where $\Bb^m \subset \Cb^m$ is the unit ball. 
\end{definition}

\begin{remark}
\textsf{HHR} manifolds are sometimes called manifolds with the \emph{uniform squeezing property}, see for instance~\cite{Y2009}. 
\end{remark}

For a general complex manifold, it is very difficult to construct bounded holomorphic functions much less bounded embeddings into complex Euclidean space (see for instance Yau's long standing conjecture~\cite[Problem 38]{Yau1982}). Motivated by Sibony's~\cite{Sib1981} lower bound on the Kobayashi metric, we replace the existence of a bounded holomorphic embeddings of $M$ into $\Cb^m$ with the existence of a bounded plurisubharmonic function with large Levi form. 

In the following definition let $\Levi(f)$ denote the Levi form of a $\Cc^2$ function $f$ and let $g_{\Euc}$ denote the Euclidean metric on $\Cb^m$. 

\begin{definition}\label{defn:wHHR} A complex $m$-manifold $M$ is a \emph{weakly holomorphic homogeneous regular (\textsf{wHHR}) manifold} if  there exists $s > 0$ such that: for every $\zeta \in M$ there exist a holomorphic embedding $\Phi : \Bb^m \rightarrow M$ and a $\Cc^2$ plurisubharmonic function $\phi : M \rightarrow [0,1]$ where $\Phi(0)= \zeta$ and 
\begin{align*}
\Levi(\phi \circ \Phi) \geq s^2 g_{\Euc} \text{ on } \Bb^m. 
\end{align*}
If, in addition, $M \subset \Cb^m$ is a domain, then we call $M$ a \textsf{wHHR} domain.
\end{definition}

As the name suggests every \textsf{HHR} manifold is an \textsf{wHHR} manifold.

\begin{observation}\label{obs:HHR_implies_wHHR}
If $M$ is a \textsf{HHR} manifold, then $M$ is also a \textsf{wHHR} manifold. 
\end{observation}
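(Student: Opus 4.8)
The plan is to build the data required by Definition \ref{defn:wHHR} directly out of the squeezing maps furnished by the \textsf{HHR} condition. Fix $s>0$ as in the definition of \textsf{HHR}, fix an arbitrary $\zeta\in M$, and let $f\colon M\to\Cb^m$ be a holomorphic embedding with $f(\zeta)=0$ and $s\Bb^m\subset f(M)\subset\Bb^m$. Since $f$ is a biholomorphism onto the open set $f(M)\supset s\Bb^m$, the inverse $f^{-1}$ is holomorphic on $s\Bb^m$, so I would define $\Phi\colon\Bb^m\to M$ by $\Phi(z)=f^{-1}(sz)$. This is holomorphic and injective (both $f^{-1}$ and the dilation $z\mapsto sz$ are), hence a holomorphic embedding of $\Bb^m$ onto the open set $f^{-1}(s\Bb^m)\subset M$, and $\Phi(0)=f^{-1}(0)=\zeta$. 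This supplies the required embedding of the ball.

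For the plurisubharmonic function I would take $\phi\colon M\to[0,1]$ defined by $\phi(w)=\abs{f(w)}^2=\sum_{j=1}^m\abs{f_j(w)}^2$. Because $f(M)\subset\Bb^m$ this is valued in $[0,1)\subset[0,1]$, it is real-analytic hence $\Cc^2$, and it is (strictly) plurisubharmonic since it is a finite sum of squares of moduli of holomorphic functions. The one substantive thing to check is the Levi form estimate, and here the composition collapses: $(\phi\circ\Phi)(z)=\abs{f(f^{-1}(sz))}^2=s^2\abs{z}^2$ for $z\in\Bb^m$, so $\Levi(\phi\circ\Phi)=s^2\,\Levi(\abs{z}^2)=c\,s^2\,g_{\Euc}$ on $\Bb^m$, where $c>0$ is the universal constant relating the Levi form of $\abs{z}^2$ to $g_{\Euc}$. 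Since the squeezing constant in Definition \ref{defn:wHHR} only needs to exist, renaming $\sqrt{c}\,s$ (or rescaling $f$ at the outset so that $c=1$) gives $\Levi(\phi\circ\Phi)\ge s^2 g_{\Euc}$, and as $\zeta$ was arbitrary with the constant independent of $\zeta$, this shows $M$ is \textsf{wHHR}.

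There is essentially no serious obstacle here; the statement is a bookkeeping exercise once one notices that $\abs{f}^2$ is the natural bounded plurisubharmonic surrogate for the bounded embedding. The only points requiring any care are (i) confirming that $\Phi=f^{-1}(s\,\cdot\,)$ is a genuine embedding of $\Bb^m$, which is immediate from the properties of $f^{-1}$ and the dilation, and (ii) tracking the normalization constant between $\Levi(\abs{z}^2)$ and $g_{\Euc}$ so that the inequality appears in exactly the form demanded by Definition \ref{defn:wHHR}.
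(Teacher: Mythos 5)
Your proof is correct and is essentially identical to the paper's: both take $\Phi(w)=f^{-1}(sw)$ and $\phi(z)=\norm{f(z)}^2$, compute $(\phi\circ\Phi)(w)=s^2\norm{w}^2$, and read off $\Levi(\phi\circ\Phi)=s^2 g_{\Euc}$. Your worry about a normalization constant $c$ is unnecessary under the paper's convention for the Levi form, where $\Levi(\norm{z}^2)=g_{\Euc}$ exactly, but you correctly note it would be harmless in any case.
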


Since the proof is short we include it here. 

\begin{proof} Suppose $f : M \rightarrow \Cb^m$ is a holomorphic embedding with $f(\zeta) = 0$ and $s\Bb^m \subset f(M) \subset \Bb^m$. Then define $\Phi : \Bb^m \rightarrow M$ by $\Phi(w) = f^{-1}(sw)$ and define $\phi: M \rightarrow [0,1]$ by $\phi(z) = \norm{f(z)}^2$. Then $\phi$ is a $\Cc^2$ plurisubharmonic function and $\phi \circ \Phi(w) = s^2\norm{w}^2$. So 
\begin{align*}
\Levi(\phi \circ \Phi) = s^2 g_{\Euc} 
\end{align*}
on $\Bb^m$. 
\end{proof}

Like \textsf{HHR} manifolds~\cite{LSY2004a,Y2009}, we will show that the Kobayashi distance is Cauchy complete on a \textsf{wHHR} manifold. 

\begin{theorem}[see Corollary~\ref{cor:kob is cc} below]\label{thm:Kob_intro} If $M$ is a \textsf{wHHR} manifold, then the Kobayashi distance on $M$ is Cauchy complete. \end{theorem}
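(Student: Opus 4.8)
The plan is to establish a lower bound on the Kobayashi distance near any point of $M$ in terms of the Euclidean distance coming from the embedded ball $\Phi : \Bb^m \rightarrow M$, and then combine this with a quantitative version of the Cauchy criterion. First I would recall Sibony's observation: if $\phi : M \rightarrow [0,1]$ is plurisubharmonic with $\Levi(\phi) \geq c\, g$ near a point $p$ for some Hermitian metric $g$ and some $c > 0$, then one gets a lower bound $k_M(p, \cdot) \gtrsim \sqrt{c}\, \dist_g(p, \cdot)$ for points near $p$; more precisely, using $u = \phi$ as a competitor in the definition of the Sibony pseudometric (or directly estimating $1 - |f|^2$-type quantities along analytic disks), one controls how fast the Kobayashi distance grows. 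The key point is that the constant $s$ in the definition of \textsf{wHHR} is uniform over all $\zeta \in M$, so the lower bound is uniform.

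The key steps, in order, would be: (1) Fix $\zeta \in M$ and take $\Phi : \Bb^m \rightarrow M$ and $\phi$ as in Definition~\ref{defn:wHHR}. Since $\phi \circ \Phi$ is bounded by $1$ and has Levi form at least $s^2 g_{\Euc}$ on $\Bb^m$, a direct computation (expanding $\phi \circ \Phi$ to second order, or invoking Sibony's lower bound applied on $\Bb^m$ with the pullback function) shows there is a radius $r_0 \in (0,1)$, depending only on $s$, and a constant $c_0 > 0$, depending only on $s$, such that the Kobayashi metric satisfies $\kappa_M(\Phi(w); v) \geq c_0 \norm{(d\Phi)_w^{-1} v}$ for $w$ in the ball $r_0 \Bb^m$; by the distance-decreasing property of $\Phi$ composed with the Kobayashi metric comparison, this translates into $k_M(\zeta, \Phi(w)) \geq c_0 \norm{w}$ for $\norm{w} \leq r_0$ — or at least a bound of the form $k_M(\zeta, x) \geq \delta_0 > 0$ for all $x$ outside the relatively compact set $\Phi(r_0 \Bb^m)$ whenever $x$ is still "close" to $\zeta$ in a suitable sense. (2) More usefully, observe that $\Phi(\frac12 \Bb^m)$ is a neighborhood of $\zeta$ that is relatively compact in $\Phi(\Bb^m)$, and the estimate shows every point of its boundary sphere image has Kobayashi distance from $\zeta$ bounded below by a universal constant $\delta_0 = \delta_0(s) > 0$. (3) Now run the standard completeness argument: let $(x_n)$ be a Kobayashi-Cauchy sequence; it is contained in a bounded set for the Kobayashi distance, and using the uniform-neighborhood estimate from (2) one shows that the sequence cannot escape to the "boundary" — concretely, for $n$ large all $x_n$ lie within Kobayashi distance $\delta_0 / 2$ of each other, hence (after fixing a tail point $x_N$ and its chart $\Phi$) all $x_n$ with $n \geq N$ lie in $\Phi(\frac12\Bb^m)$, which is relatively compact in $M$; the Kobayashi distance restricted there is comparable to the Euclidean distance on $\frac12 \Bb^m$, so $(x_n)$ converges in $M$.

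The main obstacle I expect is step (1): making the passage from the differential (infinitesimal) Sibony/Levi-form bound to an honest lower bound on the integrated Kobayashi \emph{distance} that is both uniform in $\zeta$ and valid on a definite-size neighborhood. One has to be careful that the function $\phi \circ \Phi$ lives on $\Bb^m$ but the Kobayashi metric of $M$ — not of $\Bb^m$ — is what must be bounded below; since $\Phi$ is only an embedding, not a biholomorphism onto $M$, one cannot simply use $k_{\Bb^m}$. The right tool is Sibony's lemma applied directly to $M$ with the global bounded plurisubharmonic function $\phi$: on the region $\Phi(r_0\Bb^m)$ the lower bound $\Levi(\phi) \geq s^2 (\Phi^{-1})^* g_{\Euc}$ holds, and $(\Phi^{-1})^* g_{\Euc}$ dominates a fixed multiple of any chart metric there, so Sibony gives $\kappa_M \gtrsim s \cdot (\text{chart metric})$ on that region with constants depending only on $s$ and $r_0(s)$. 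Integrating along a path and being slightly careful about paths that leave $\Phi(r_0 \Bb^m)$ (they must cross the sphere $\Phi(\partial(r_0 \Bb^m))$, each crossing contributing a definite amount) yields the uniform distance estimate of step (2). Once that estimate is in hand, steps (2) and (3) are routine point-set topology together with the standard fact that the Kobayashi distance is locally comparable to the Euclidean distance inside a relatively compact coordinate ball.
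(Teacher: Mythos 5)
Your overall plan is correct and follows the same architecture as the paper: establish a uniform lower bound on the Kobayashi distance in a definite-size $\Phi$-coordinate ball around the basepoint, then run the standard ``a Cauchy sequence must eventually stay in a relatively compact chart ball'' argument. The paper's Corollary~\ref{cor:kob is cc} is exactly your step~(3), and the route-crossing argument you sketch for paths that leave $\Phi(r_0\Bb^m)$ is exactly the ``moreover'' clause in the proof of Theorem~\ref{thm:comp_to_kob}.

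There is a genuine gap in step~(1), however, and it is precisely the step you flagged as ``the main obstacle.'' You propose to ``apply Sibony's lemma directly to $M$ with the global bounded plurisubharmonic function $\phi$,'' but the $\phi$ supplied by Definition~\ref{defn:wHHR} is \emph{not} a Sibony competitor: it is required only to be $\Cc^2$, plurisubharmonic, valued in $[0,1]$, and have $\Levi(\phi\circ\Phi)\geq s^2 g_{\Euc}$. It need not vanish at $\zeta$, and $\log\phi$ need not be plurisubharmonic; both are needed to invoke Sibony's Schwarz lemma (Theorem~\ref{thm:sibony}) along the test disks $\phi\circ\varphi$. Without these normalizations, a bounded psh $\phi$ with large Levi form at $p$ contributes nothing directly to the Sibony pseudometric at $p$. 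The nontrivial content of the argument is exactly the construction that upgrades $\phi$: one cuts off a squared-distance or $\log\|\cdot\|$ term in the $\Phi$-chart and ``recovers'' plurisubharmonicity of the product or sum by paying with $e^{\lambda\phi}$ or $\tfrac{A}{s^2}(\phi-1)$, at the cost of an exponentially degrading constant. The paper does this twice, in two ways: Proposition~\ref{prop:wsq_basic in paper 1} builds a genuine Sibony competitor (log-psh, vanishing at $z_0$) out of $\phi$, and Theorem~\ref{thm:green_fcn_bds} builds instead a negative psh competitor for the pluricomplex Green function out of $\phi$. The completeness proof actually travels the second route: Green function lower bound $\Rightarrow$ Kobayashi metric lower bound via monotonicity $\Gf_{\Db}\geq\Gf_M\circ\varphi$ (proof of Theorem~\ref{thm:comp_to_kob}), which is cleaner and gives better constants than going through the Sibony metric. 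So your plan is salvageable, but ``Sibony gives $\kappa_M\gtrsim s\cdot(\text{chart metric})$'' is not a one-liner: it is the main lemma, and as stated you have not supplied it.
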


Combining Theorem~\ref{thm:Kob_intro} with results of Royden~\cite[Corollary pg. 136]{R1971} and Wu~\cite[Theorem F]{W1967} we deduce that \textsf{wHHR} domains are taut and pseudoconvex. 

\begin{corollary}\label{cor:pseudoconvex} If $\Omega \subset \Cb^m$ is a \textsf{wHHR} domain, then $\Omega$ is taut and pseudoconvex. \end{corollary}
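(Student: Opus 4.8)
The plan is to chain together Theorem~\ref{thm:Kob_intro} with two classical facts. Since a \textsf{wHHR} domain is in particular a \textsf{wHHR} manifold, Theorem~\ref{thm:Kob_intro} shows that the Kobayashi distance $d_\Omega$ on $\Omega$ is Cauchy complete. Before exploiting this I would first record that $\Omega$ is Kobayashi hyperbolic, so that $d_\Omega$ is a genuine distance, and that $d_\Omega$ induces the Euclidean topology on $\Omega$. Hyperbolicity is essentially built into Definition~\ref{defn:wHHR}: the bounded plurisubharmonic function $\phi$ with $\Levi(\phi \circ \Phi) \geq s^2 g_{\Euc}$ on $\Bb^m$, combined with Sibony's lower bound~\cite{Sib1981} for the infinitesimal Kobayashi metric in terms of bounded plurisubharmonic functions with positive Levi form, produces a uniform positive lower bound for the Kobayashi metric of $\Omega$ near every point; hence $\Omega$ is hyperbolic, and by Barth's theorem the Kobayashi distance of a hyperbolic domain in $\Cb^m$ induces the standard topology. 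Thus $\Omega$ is complete hyperbolic, that is, $d_\Omega$ is a complete distance inducing the topology of $\Omega$.

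Next I would invoke Royden~\cite[Corollary pg. 136]{R1971}: a complete hyperbolic complex manifold is taut, i.e.\ the family $\Hol(\Db,\Omega)$ of holomorphic discs in $\Omega$ is normal in the sense that every sequence admits a subsequence that either converges locally uniformly or is compactly divergent. Applied to $\Omega$, this gives that $\Omega$ is taut.

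Finally I would apply Wu~\cite[Theorem F]{W1967}, according to which a taut domain in $\Cb^m$ is a domain of holomorphy, hence pseudoconvex. The mechanism behind Wu's theorem is the Kontinuit\"atssatz: a failure of pseudoconvexity yields a sequence of analytic discs in $\Omega$ whose boundaries remain in a fixed compact subset of $\Omega$ while their centers escape every compact subset, contradicting the normal-families description of tautness. This gives that $\Omega$ is pseudoconvex and completes the argument.

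Since each step is either an instance of Theorem~\ref{thm:Kob_intro} or a citation, there is no substantial obstacle here; the only point that genuinely requires attention is upgrading the conclusion of Theorem~\ref{thm:Kob_intro} — Cauchy completeness of the (a priori possibly degenerate) Kobayashi pseudodistance — to honest complete hyperbolicity, which is exactly what the hyperbolicity observation and Barth's theorem supply before Royden's result can be invoked.
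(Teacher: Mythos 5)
Your argument is the paper's: Theorem~\ref{thm:Kob_intro} (via Corollary~\ref{cor:kob is cc}) gives Cauchy completeness, Royden upgrades that to tautness, and Wu's Theorem~F gives pseudoconvexity. The extra step you flag—upgrading Cauchy completeness of the Kobayashi pseudodistance to genuine complete hyperbolicity—is a legitimate point of care, but it is already built into the paper's Theorem~\ref{thm:comp_to_kob} (the lower bound in Equation~\eqref{eqn:estimate on Kob distance} gives local nondegeneracy), so you are spelling out rather than adding anything.
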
 

We can also estimate the Bergman metric on \textsf{wHHR} manifolds, but require the additional assumption that the manifold is Stein. 

\begin{theorem}[see Theorem~\ref{thm:wHHR_comparision} below]\label{thm:wHHR_comparision in intro} If $M$ is a \textsf{wHHR} Stein manifold, then:
\begin{enumerate}
\item The Bergman metric on $M$ is a complete K\"ahler metric with bounded geometry (in the sense of Definition~\ref{defn:bounded geometry} below).
\item The Kobayashi and Bergman metrics on $M$ are uniformly biLipschitz. 
\end{enumerate}
\end{theorem}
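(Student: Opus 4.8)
The plan is to establish the two assertions of Theorem~\ref{thm:wHHR_comparision in intro} by combining three ingredients: a localized lower bound on the Kobayashi metric coming from the given bounded plurisubharmonic functions with large Levi form (this is essentially Sibony's estimate), a comparison of the Kobayashi metric with the metric pulled back through the embeddings $\Phi:\Bb^m\to M$, and standard $L^2$/$\bar\partial$-techniques on Stein manifolds to control the Bergman kernel and metric. Throughout, the key structural feature to exploit is the \emph{uniformity in $\zeta$} built into Definition~\ref{defn:wHHR}: the squeezing constant $s$ is independent of the base point, so all the estimates one obtains are uniform, and this is what ultimately yields bounded geometry and a global biLipschitz constant.

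First I would prove a two-sided pointwise estimate for the Kobayashi metric $\kappa_M$ at an arbitrary point $\zeta$, in terms of the Euclidean metric transported by $\Phi$. The upper bound $\kappa_M(\zeta;\Phi_*v)\le \|v\|$ is immediate from the distance-decreasing property applied to $\Phi:\Bb^m\to M$ (using $\kappa_{\Bb^m}(0;v)=\|v\|$). For the lower bound, I would use the plurisubharmonic function $\phi$: since $0\le \phi\le 1$ and $\Levi(\phi\circ\Phi)\ge s^2 g_{\Euc}$ on $\Bb^m$, Sibony's lemma produces a lower bound $\kappa_M(\zeta; \cdot)\gtrsim s\,(\Phi^{-1})_*(\cdot)$ near $\zeta$ — concretely, one builds from $\phi$ a bounded function whose Levi form dominates a multiple of the metric and invokes the standard estimate that the Kobayashi metric is bounded below by the square root of the Levi form of any bounded plurisubharmonic function (up to a universal constant). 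The upshot is that, at every $\zeta$, the Kobayashi metric is comparable with constants depending only on $s$ to the flat metric $\Phi^*g_{\Euc}$ on a fixed-size ball around $\zeta$. Theorem~\ref{thm:Kob_intro} (Cauchy completeness) will be used freely here and below.

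Next, for the Bergman side I would use that $M$ is Stein to run the Hörmander $L^2$ $\bar\partial$-machinery with weight $\phi$ (or a small multiple of it composed with $\Phi$) as in Ohsawa--Takegoshi-type constructions. Because $\phi$ is globally bounded, the weight is bounded, so the weighted and unweighted $L^2$ spaces of holomorphic top-forms are comparable; solving $\bar\partial$ with the strictly plurisubharmonic weight $\varepsilon\,\phi\circ\Phi$ on the unit ball image gives a holomorphic $L^2$ form that is large at $\zeta$ and controlled in norm, yielding a uniform lower bound on the Bergman kernel $K_M(\zeta)$. An upper bound on $K_M(\zeta)$, together with a localization of the Bergman kernel to the $\Phi$-image of a ball (again using Steinness and the plurisubharmonic weight to cut off), gives that $K_M(\zeta)$ is comparable to $K_{\Bb^m}(0)$ after the $\Phi$-normalization. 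Differentiating these kernel estimates — via the standard Cauchy-estimate argument for the Bergman metric, or Kobayashi's characterization of the Bergman metric as a sup over $L^2$-normalized sections vanishing at $\zeta$ — produces two-sided bounds $c\,\Phi^*g_{\Euc}\le g_{B,M}\le C\,\Phi^*g_{\Euc}$ at $\zeta$, with $c,C$ depending only on $s$ and $m$. Since this holds at every $\zeta$, combining with the Kobayashi estimate from the previous paragraph gives assertion~(2), the uniform biLipschitz equivalence. For assertion~(1): comparability with the flat metric on balls of fixed radius, done uniformly in the base point and uniformly in all covariant derivatives (which one gets by pushing the kernel estimates through higher-order Cauchy estimates), is exactly bounded geometry in the sense of Definition~\ref{defn:bounded geometry}; completeness of the Bergman metric follows from its biLipschitz equivalence with the Kobayashi metric, which is complete by Theorem~\ref{thm:Kob_intro}.

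The main obstacle I expect is the \emph{Bergman-side localization}: transferring information between $K_M$ and $K_{\Bb^m}\circ$(normalization) requires cutting off holomorphic forms near $\zeta$ and correcting with $\bar\partial$, and one must check that all error terms are controlled purely by $s$ and $m$ and not by the geometry of $M$ far from $\zeta$ — in particular that the cutoff can be taken supported in the fixed-size ball $\Phi(r\Bb^m)$ where the Levi-form lower bound is available, and that the resulting holomorphic correction does not destroy the value at $\zeta$. The boundedness of $\phi$ is what makes the weighted estimates uniform, but one has to be careful that the strict plurisubharmonicity used to solve $\bar\partial$ is only guaranteed on the ball, so the global solution must be obtained by combining a local strictly-plurisubharmonic weight with a global (merely) plurisubharmonic exhaustion coming from Steinness. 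Getting the higher-derivative estimates for part~(1) is then a routine but bookkeeping-heavy iteration of the same Cauchy-estimate idea.
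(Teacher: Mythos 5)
Your plan shares the paper's overall architecture (bounded psh function with large Levi form to control the Kobayashi metric, H\"ormander $\bar\partial$-theory on a Stein manifold to build Bergman sections, Cauchy estimates for higher derivatives), but the step you flag as an ``obstacle'' — making the $\bar\partial$-correction vanish at $\zeta$ — is not a routine loose end; it is the linchpin of the Bergman lower bound, and your weights cannot do it. You solve $\bar\partial F_0=\bar\partial(\chi f)$ with weights $\varepsilon\,\phi\circ\Phi$ (bounded) plus a global exhaustion (smooth); neither has any singularity at $\zeta$, so there is no reason for $F_0$ to vanish at $\zeta$, and $F=\chi f - F_0$ need not have $\hat F(0)=f(0)$. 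The paper's fix is to add the singular weight $\lambda_2=2(m+n)\,\mathsf{G}_M(\cdot,\Phi(0))$, where $\mathsf{G}_M$ is the pluricomplex Green function; finiteness of the weighted $L^2$ norm then forces $F_0$ to vanish to order $n$ at $\Phi(0)$ (see Lemma~\ref{thm:extension}). That, in turn, requires the two-sided Green function estimates of Theorem~\ref{thm:green_fcn_bds}. So although your appeal to ``Sibony's estimate'' is a legitimate shortcut for the Kobayashi lower bound in isolation, it discards precisely the intermediate object (the Green function estimate) that the Bergman side needs; the paper derives the Kobayashi bound \emph{through} the Green function estimate for this reason.

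There is a second, smaller gap in part (1): Definition~\ref{defn:bounded geometry} separately requires a positive lower bound on the injectivity radius, and comparability of $g_M$ with the flat metric in $\Phi$-coordinates plus bounds on $\nabla^q R$ do not by themselves yield it (one must rule out short geodesic loops that leave the chart). The paper invokes ~\cite[Proposition 2.1]{LSY2004b}, which converts uniform chart comparability and curvature bounds into an injectivity radius bound; your proposal does not address the injectivity radius at all. Everything else — the upper bound on the Bergman kernel via pulling back $L^2$ forms (Equation~\eqref{eqn:bound on L2 norm} plus the Wu--Yau derivative estimate, Proposition~\ref{prop:upper bounds on Bergman stuff}), and the deduction of completeness of $g_M$ from biLipschitz equivalence with the complete Kobayashi metric — is consistent with what the paper does.
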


Notice that a \textsf{wHHR} domain is always Stein by Corollary~\ref{cor:pseudoconvex}. We require that the manifold is Stein in Theorem~\ref{thm:wHHR_comparision in intro} so that we can solve the $\bar{\partial}$-equation in weighted $L^2$ spaces.

\subsection{A pointwise version} The \emph{squeezing function}, introduced by Deng--Guan--Zhang~\cite{DGZ2012}, on a complex $m$-manifold $M$ is defined by 
\begin{align*}
\sq_M(\zeta) := \sup\{ r : & \text{ there exists an holomorphic embedding } \\
& f: M \rightarrow \Bb^m \text{ with } f(\zeta)=0 \text{ and }r\Bb^m \subset f(M) \}.
\end{align*}
Notice that a manifold is \textsf{HHR} if and only if the squeezing function has a positive lower bound. Further, if $M$ admits no bounded embeddings, then $\sq_M \equiv -\infty$. 

The squeezing function can be useful in some arguments because it provides a localized version of the \textsf{HHR} condition, see for instance the proof of Theorem 1.3 in~\cite{Z2018} or the arguments in Section 4 of~\cite{AFGG2020}. Motivated by this utility, we make the following definition. 

\begin{definition}\label{defn:weak_squeezing_function} Given a complex $m$-manifold $M$ the \emph{weak squeezing function} 
\begin{align*}
\wsq_M : M \rightarrow [0,1]
\end{align*}
is defined as follows: $\wsq_M(\zeta)$ is the supremum over all $s \in [0,1]$ where there exist a holomorphic embedding $\Phi : \Bb^m \rightarrow M$ and a $\Cc^2$ function $\phi : M \rightarrow [0,1]$ where $\Phi(0)=\zeta$, $\phi(\zeta) = 0$, $\log \phi$ is plurisubharmonic, and 
\begin{align*}
\mathscr{L}( \phi \circ \Phi)\geq s^2 g_{\Euc} 
\end{align*}
 on $\Bb^m$.
\end{definition}

\begin{remark} The conditions on $\phi$ in Definition~\ref{defn:weak_squeezing_function} might seem stronger than the conditions in Definition~\ref{defn:wHHR}, however it is straightforward to modify the function $\phi$ in Definition~\ref{defn:wHHR} to satisfy the assumptions in Definition~\ref{defn:weak_squeezing_function} for a possibly smaller value of $s$. The additional assumptions on $\phi$ in Definition~\ref{defn:weak_squeezing_function} are motivated by Sibony's lower bound for the Kobayashi metric~\cite{Sib1981}.
\end{remark}

This function has the following basic properties. 

\begin{proposition}[see Propositions~\ref{prop:wsq_basic in paper 1} and~\ref{prop:wsq_basic in paper 2} below]\label{prop:wsq_basic} If $M$ is a complex manifold, then 
\begin{enumerate}
\item $\sq_M(z_0) \leq \wsq_M(z_0) \leq 1$ for all $z_0 \in M$. 
\item $M$ is a \textsf{wHHR} manifold if and only if $\wsq_M$ has a positive lower bound.
\end{enumerate}
\end{proposition}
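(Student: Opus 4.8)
The plan is to prove the two parts of Proposition~\ref{prop:wsq_basic} separately, both essentially by unwinding the definitions.

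\textbf{Part (1): the chain of inequalities.} The upper bound $\wsq_M(z_0) \leq 1$ is immediate from the definition, since the supremum is taken over $s \in [0,1]$. For the lower bound $\sq_M(z_0) \leq \wsq_M(z_0)$, I would argue exactly as in the proof of Observation~\ref{obs:HHR_implies_wHHR}, but keeping track of the local quantity. Given any $r < \sq_M(z_0)$, there is a holomorphic embedding $f : M \to \Bb^m$ with $f(z_0) = 0$ and $r\Bb^m \subset f(M)$. Set $\Phi(w) = f^{-1}(rw)$ (defined on $\Bb^m$ since $r\Bb^m \subset f(M)$) and $\phi(z) = \norm{f(z)}^2$. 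Then $\phi : M \to [0,1]$ since $f(M) \subset \Bb^m$, we have $\Phi(0) = z_0$ and $\phi(z_0) = 0$; moreover $\log\phi(z) = 2\log\norm{f(z)}$ is plurisubharmonic because $\log\norm{\cdot}$ is plurisubharmonic on $\Cb^m$ and $f$ is holomorphic. Finally $\phi\circ\Phi(w) = r^2\norm{w}^2$, so $\Levi(\phi\circ\Phi) = r^2 g_{\Euc} \geq r^2 g_{\Euc}$ on $\Bb^m$. Hence $r$ is an admissible value of $s$ in the definition of $\wsq_M(z_0)$, giving $r \leq \wsq_M(z_0)$; letting $r \uparrow \sq_M(z_0)$ finishes this case. (If $\sq_M(z_0) = -\infty$ the inequality is vacuous.)

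\textbf{Part (2): the characterization of \textsf{wHHR}.} If $\wsq_M$ has a positive lower bound $s_0 > 0$, then for each $\zeta \in M$ and each $s < s_0$ there exist $\Phi$ and $\phi$ as in Definition~\ref{defn:weak_squeezing_function}; in particular $\phi : M \to [0,1]$ is $\Cc^2$ (automatically plurisubharmonic since $\log\phi$ plurisubharmonic and $\phi \geq 0$ implies $\phi = e^{\log\phi}$ is plurisubharmonic — using that $e^t$ is convex increasing, and handling the locus $\phi = 0$ by the standard fact that such a function is plurisubharmonic) and satisfies $\Levi(\phi\circ\Phi) \geq s^2 g_{\Euc}$ on $\Bb^m$ with $\Phi(0) = \zeta$. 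This is precisely the data required by Definition~\ref{defn:wHHR} with the uniform constant $s$, so $M$ is \textsf{wHHR}. Conversely, suppose $M$ is \textsf{wHHR} with constant $s > 0$; I must show $\wsq_M \geq s'$ for some $s' > 0$. Here I would invoke the remark following Definition~\ref{defn:weak_squeezing_function}, which asserts that the function $\phi$ from Definition~\ref{defn:wHHR} can be modified to satisfy the extra conditions of Definition~\ref{defn:weak_squeezing_function} at the cost of shrinking $s$. Concretely, given $\Phi$ and plurisubharmonic $\phi : M \to [0,1]$ with $\Levi(\phi\circ\Phi) \geq s^2 g_{\Euc}$ and $\Phi(0) = \zeta$, one replaces $\phi$ by something like $\psi = (1-\epsilon)\phi + \epsilon\norm{\cdot}^2_{\text{loc}}$ composed appropriately — or more cleanly, uses $\psi = c\log(1 + \phi/c)$-type constructions — to arrange $\log\psi$ plurisubharmonic and $\psi(\zeta) = 0$ while retaining a definite lower bound $t^2 g_{\Euc}$ on the Levi form of $\psi\circ\Phi$ with $t = t(s,m) > 0$ uniform in $\zeta$. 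The value $s' = t$ then serves as a positive lower bound for $\wsq_M$.

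\textbf{Main obstacle.} The only non-formal point is the converse direction of Part (2): rigging the function $\phi$ so that, in addition to being plurisubharmonic with values in $[0,1]$ and having a large Levi form, it also vanishes exactly at $\zeta$ and has plurisubharmonic logarithm, all with a uniform-in-$\zeta$ loss in the constant. The natural fix is to precompose with a fractional-linear automorphism of $\Bb^m$ moving things appropriately and to exploit that on the fixed ball $\Bb^m$ the function $\phi\circ\Phi$ already has a definite lower bound on its Levi form, so subtracting off its value and first-order Taylor data at $0$ and adding back a small multiple of $\norm{w}^2$ keeps the Levi form bounded below while forcing a zero at $0$; passing back to $M$ and taking logarithms (after a further harmless modification to control $\log$) is where the careful but routine bookkeeping lives. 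Since the excerpt tells us this modification is "straightforward," I would present it as a short lemma with the explicit formula and defer to the later Propositions~\ref{prop:wsq_basic in paper 1} and~\ref{prop:wsq_basic in paper 2} for the full details.
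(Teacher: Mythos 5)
Your Part (1) is essentially correct and matches the paper's easy direction: for $\sq_M \leq \wsq_M$ the same data $\Phi(w) = f^{-1}(rw)$, $\phi = \norm{f}^2$ works, and $\log\phi = 2\log\norm{f}$ is plurisubharmonic. For the upper bound $\wsq_M \leq 1$ you observe it is built into the definition (the supremum is over $s\in[0,1]$), which is literally true; note, though, that the paper's Proposition~\ref{prop:wsq_basic in paper 2} proves the non-vacuous strengthening via Sibony's Schwarz lemma (Theorem~\ref{thm:sibony}) that no admissible $s$ can exceed $1$ at all — a fact it then reuses for the rigidity statement. Your easy direction of Part (2) is also fine.

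The converse direction of Part (2) has a genuine gap. You correctly identify that the work is to manufacture, from the $\textsf{wHHR}$ data of Definition~\ref{defn:wHHR}, a function $\psi$ satisfying the extra requirements of Definition~\ref{defn:weak_squeezing_function}, in particular $\psi(\zeta)=0$ and $\log\psi$ plurisubharmonic. But the specific replacements you float do not deliver this. The crucial point is that Definition~\ref{defn:wHHR} does \emph{not} assume $\phi(\zeta)=0$; it only asks $\phi:M\to[0,1]$, plurisubharmonic, with a Levi form lower bound. Consequently neither $\psi=(1-\epsilon)\phi + \epsilon\chi\norm{\Phi^{-1}(\cdot)}^2$ nor $\psi = c\log(1+\phi/c)$ vanishes at $\zeta$ in general, and an attempt to fix this by subtracting off $\phi(\zeta)$ (or $1$-jet data) immediately threatens the constraint $\psi\geq 0$ and, worse, destroys plurisubharmonicity of $\log\psi$ near the zero level. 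Additive corrections also make $\log\psi$ hard to control. The paper sidesteps all of these problems with a \emph{multiplicative} ansatz: after first replacing $\phi$ by $\phi_1 := e^{-1+\phi}$ (to gain the pointwise inequality $\abs{X(\phi_1\circ\Phi)}^2 \leq \Levi(\phi_1\circ\Phi)(X,\bar X)$), it sets, with a cutoff $\chi$ equal to $1$ on $\delta\Bb^m$,
\begin{align*}
\phi_2(z) = \norm{\Phi^{-1}(z)}^{2\chi(\Phi^{-1}(z))}\,e^{\lambda\,\phi_1(z)}
\end{align*}
on $\Phi(\Bb^m)$ and $\phi_2 = e^{\lambda\phi_1}$ elsewhere. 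Because $\norm{0}=0$, this vanishes at $\zeta$ automatically regardless of $\phi(\zeta)$; and $\log\phi_2 = 2\chi\log\norm{\cdot} + \lambda\phi_1$ is a \emph{sum} whose first summand has Levi form bounded below by $-A$ on $\Bb^m$ and whose second summand compensates once $\lambda$ is large. The genuinely nontrivial work — and what your sketch does not contain — is the Levi form estimate for the product $\phi_2\circ\Phi = \norm{w}^2 e^{\lambda\phi_1\circ\Phi}$ on $\delta\Bb^m$, which is carried out by splitting into two cases depending on the size of $\lambda\abs{X(\phi_1\circ\Phi)}\norm{w}$ and using the inequality for $\abs{X(\phi_1\circ\Phi)}^2$ mentioned above; after this one rescales ($\tilde\Phi(w)=\Phi(\delta w)$, $\tilde\phi = e^{-\lambda}\phi_2$) to land back in the form required by Definition~\ref{defn:weak_squeezing_function}. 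Without some version of this computation your ``converse'' argument is missing its core.
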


We will prove the following pointwise version of part (2) in Theorem~\ref{thm:wHHR_comparision in intro}.  

\begin{theorem}[see Theorem~\ref{thm:bergman_metric_versus_kob} below]\label{thm:wsq_comparison} For every $s \in (0,1)$ and $m \in \Nb$ there exists $C =C(s,m)> 1$ such that: If $M$ is a Stein $m$-manifold, $z_0 \in M$, and $\wsq_M(z_0) \geq s$, then the Bergman and Kobayashi metrics are $C$-biLipschitz at $z_0$. 
\end{theorem}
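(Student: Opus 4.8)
The plan is to prove a pointwise, effective version of the biLipschitz comparison between the Bergman and Kobayashi metrics at a point $z_0$ where $\wsq_M(z_0) \geq s$. Recall that one always has the inequality $\Bergman \geq \Kobayashi$ up to a universal dimensional constant on any complex manifold (this is classical, following from the fact that the Bergman metric is the extremal metric for holomorphic maps to the disk weighted by $L^2$ data, combined with the Kobayashi metric being the largest contracting pseudometric — more precisely one uses that the Bergman metric dominates the Carath\'eodory metric up to a factor depending only on $m$, and Carath\'eodory $\leq$ Kobayashi). So the real content is the reverse bound: $\Bergman(z_0; v) \leq C \,\Kobayashi(z_0; v)$ for all tangent vectors $v$, with $C = C(s,m)$.

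For the reverse bound, the strategy is to use the embedded ball and the weighted plurisubharmonic function coming from $\wsq_M(z_0) \geq s$ to produce, for any prescribed $1$-jet at $z_0$, an $L^2$ holomorphic $m$-form (equivalently, using the Stein hypothesis and a choice of trivialization, an $L^2$ holomorphic function) with controlled norm realizing that jet. First I would fix $\epsilon > 0$ and choose a holomorphic embedding $\Phi : \Bb^m \to M$ with $\Phi(0) = z_0$ and a function $\phi : M \to [0,1]$ with $\phi(z_0) = 0$, $\log\phi$ plurisubharmonic, and $\Levi(\phi\circ\Phi) \geq (s-\epsilon)^2 g_{\Euc}$ on $\Bb^m$. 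The weight $-(m+2)\log\phi$ (or a similar multiple) is then plurisubharmonic on $M$, and near $z_0$ its Levi form is bounded below in terms of $s$; this is exactly the setup of H\"ormander's $L^2$ estimates for $\bar\partial$ on a Stein manifold. Given a tangent vector $v$ at $z_0$, I would start with a local holomorphic function (or form) on the image $\Phi(\tfrac12\Bb^m)$ with the right value and derivative at $z_0$, multiply by a cutoff supported in that ball, and correct the resulting $\bar\partial$ by solving $\bar\partial u = \bar\partial(\text{cutoff}\cdot\text{local model})$ with the weight $-(m+2)\log\phi$ together with an extra logarithmic singular weight at $z_0$ (the standard trick, using that $\log\|w\|^2$-type weights near $z_0$ force the solution $u$ to vanish to first order at $z_0$, so the corrected function still realizes the prescribed $1$-jet). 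The key point is that the $L^2$ norm of the correction is controlled by the $\bar\partial$-data, which lives where the cutoff has support — a fixed region of the ball $\Phi(\Bb^m)$ — and there the weight $\phi$ is bounded above and below and its Levi form bounded below, all in terms of $s$ and $m$. This yields an $L^2$ holomorphic object on $M$ with the given $1$-jet at $z_0$ and norm bounded by $C(s,m) \cdot |v|_{g_{\Euc}}$ (measuring $v$ via $\Phi$). Feeding this into the definition of the Bergman metric gives $\Bergman(z_0; v) \leq C(s,m)\, \|v\|_{\Phi^*g_{\Euc}}$. Separately, the embedding $\Phi : \Bb^m \to M$ is distance-decreasing for the Kobayashi metric, which forces $\Kobayashi_M(z_0; v) \geq \Kobayashi_{\Bb^m}(0; \Phi^{-1}_*v) \asymp \|v\|_{\Phi^* g_{\Euc}}$ with a universal constant; combining gives $\Bergman \leq C(s,m)\,\Kobayashi$ at $z_0$, as desired. (For the normalization of the Bergman metric on a Stein manifold, where there is no canonical volume form, one works with the canonical line bundle $K_M$ and $L^2$ holomorphic sections; the above $\bar\partial$ argument runs identically in that setting, and this is presumably where the Stein hypothesis is used — it guarantees enough global sections and the validity of H\"ormander's estimates.)

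Organizing the proof, the steps in order are: (i) reduce to proving the single inequality $\Bergman(z_0;\cdot) \leq C(s,m)\,\Kobayashi(z_0;\cdot)$, citing the classical reverse inequality; (ii) using $\wsq_M(z_0)\geq s$, fix $\Phi$ and $\phi$ as above and set up the plurisubharmonic weight $-(m+2)\log\phi$ on $M$, noting it is controlled on $\Phi(\Bb^m)$ in terms of $s,m$; (iii) for a given tangent vector $v$, build a local model section on $\Phi(\tfrac12\Bb^m)$, cut it off, and solve $\bar\partial$ with the weight plus a point-singular logarithmic weight at $z_0$ via H\"ormander–Demailly on the Stein manifold $M$, obtaining a global $L^2$ section realizing the $1$-jet $(0 \text{ or basepoint value}, v)$ with $L^2$ norm $\leq C(s,m)\|v\|_{\Phi^*g_{\Euc}}$; (iv) conclude the Bergman metric upper bound from its extremal characterization; (v) conclude the Kobayashi lower bound $\Kobayashi_M(z_0;v)\geq c(m)\|v\|_{\Phi^*g_{\Euc}}$ from distance-decreasing of $\Phi$ and the explicit Kobayashi metric of $\Bb^m$; (vi) combine. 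I expect the main obstacle to be step (iii): carefully choosing the singular weight at $z_0$ so that the solution $u$ of the $\bar\partial$-equation vanishes to the correct order (so as not to disturb the prescribed value/derivative at $z_0$), while simultaneously keeping the $L^2$ bound on the $\bar\partial$-data and hence on $u$ uniform in terms of only $s$ and $m$ — this requires that the region where the cutoff's differential is supported stays inside a fixed sub-ball of $\Phi(\Bb^m)$ where $\phi$ and its Levi form are quantitatively controlled, and tracking the constants through the H\"ormander estimate is the technical heart of the argument. The secondary subtlety is the bookkeeping for the Bergman metric of the canonical bundle on a non-homogeneous Stein manifold (no preferred volume form), but this is standard once the $L^2$ existence statement is in hand.
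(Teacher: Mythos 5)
You have the orientations of three key inequalities reversed, and the combined effect is that your sketch addresses only one of the two halves of the biLipschitz estimate. First, there is no universal inequality $g_M \gtrsim_m k_M$ on a complex (even Stein) manifold: Hahn's inequality gives Bergman $\geq$ Carath\'eodory, and one also has Carath\'eodory $\leq$ Kobayashi, but these chain in opposite directions and imply nothing about Bergman versus Kobayashi. Indeed on the punctured disk $\Db^\ast$ the Bergman metric stays bounded near $0$ (since $A^2(\Db^\ast)=A^2(\Db)$) while the Kobayashi metric blows up, so $g_{\Db^\ast}\not\gtrsim k_{\Db^\ast}$. Second, constructing an $L^2$ holomorphic section realizing a prescribed $1$-jet at $z_0$ with controlled norm produces a \emph{lower} bound on the Bergman metric, not an upper bound: by the extremal characterization (Theorem~\ref{thm:formulas for Bergman stuff}), $\Phi^*g_M(X,\bar X)=\hat\Bfd(z_0)^{-1}\max\{|X(\hat f)(z_0)|^2 : \norm{f}=1,\ \hat f(z_0)=0\}$, and exhibiting one admissible $f$ only bounds the maximum from below. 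Third, distance-decreasing of $\Phi:\Bb^m\to M$ gives $k_M(\Phi(0);d\Phi_0X)\leq k_{\Bb^m}(0;X)$, which is an \emph{upper} bound on $k_M$ in the $\Phi$-coordinates, not a lower bound.

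After correcting these orientations, your $\bar\partial$-construction plus distance-decreasing proves $k_M(z_0;\cdot)\lesssim g_M(z_0;\cdot)$ only. The other half, $g_M(z_0;\cdot)\lesssim k_M(z_0;\cdot)$, requires an upper bound on the Bergman metric in the $\Phi$-coordinates and a lower bound on the Kobayashi metric there, and your sketch addresses neither. The paper obtains the former from the Wu--Yau uniform estimates on the derivatives of $(\Phi\times\Phi)^*\Bf_M$ (Proposition~\ref{prop:upper bounds on Bergman stuff}), which use only the embedding and the Bergman inequality, and the latter from the two-sided pluricomplex Green function estimate (Theorem~\ref{thm:green_fcn_bds}), which uses the plurisubharmonic weight $\phi$ to build a Sibony-style negative plurisubharmonic barrier and then feeds into Theorem~\ref{thm:comp_to_kob}. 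Finally, your ``extra logarithmic singular weight at $z_0$'' is implemented globally in the paper as $2(m+n)\Gf_M(\cdot,\Phi(0))$ in Lemma~\ref{thm:extension}, and it is the same Green function estimate that simultaneously guarantees the jet-matching and keeps the $L^2$ constant uniform in $s$ and $m$; that step too uses $\phi$, not just the embedding.
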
 

We also prove the following rigidity result.

\begin{proposition}[see Proposition~\ref{prop:rigidity in paper} below]\label{prop:rigidity} If $M$ is a taut complex manifold and $\wsq_M(z) =1$ for some $z \in M$, then $M$ is biholomorphic to the unit ball.  \end{proposition}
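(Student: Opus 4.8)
The plan is to extract from the hypothesis $\wsq_M(z) = 1$ a sequence of data witnessing values $s_k \to 1$, take a limit using tautness, and identify the limit embedding with a biholomorphism onto the ball. Concretely, for each $k$ choose $s_k \nearrow 1$, a holomorphic embedding $\Phi_k : \Bb^m \to M$ with $\Phi_k(0) = z$, and a $\Cc^2$ function $\phi_k : M \to [0,1]$ with $\phi_k(z) = 0$, $\log \phi_k$ plurisubharmonic, and $\Levi(\phi_k \circ \Phi_k) \geq s_k^2 g_{\Euc}$ on $\Bb^m$. The key first observation is that $u_k := \phi_k \circ \Phi_k : \Bb^m \to [0,1]$ satisfies $u_k(0) = 0$, $\log u_k$ is plurisubharmonic on $\Bb^m$, and $\Levi(u_k) \geq s_k^2 g_{\Euc}$; I want to compare $u_k$ with the model function $w \mapsto \norm{w}^2$. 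Since $\log u_k$ is plurisubharmonic, bounded above by $0$, and $u_k(0)=0$, a Hopf-lemma / Schwarz-type argument on the disc (restricting to complex lines and using that $v \mapsto \log v$ composed with a subharmonic function forces rapid decay near the vanishing point) should give $u_k(w) \leq \norm{w}^2$; combined with the lower Levi bound $\Levi(u_k) \geq s_k^2 g_{\Euc}$ and $u_k \geq 0$ with $u_k(0) = 0$, I expect $u_k(w) \geq s_k^2 \norm{w}^2$ near $0$, and then globally on $\Bb^m$ by a maximum-principle comparison between $u_k$ and the plurisubharmonic function $s_k^2(\norm{w}^2 - 1) + $ (something vanishing on the boundary). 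So $u_k$ is squeezed between $s_k^2\norm{w}^2$ and $\norm{w}^2$.

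Next I would push this to control $\Phi_k$ itself. Because $M$ is taut, the family $\{\Phi_k\}$ is normal, so after passing to a subsequence $\Phi_k \to \Phi_\infty$ locally uniformly, where $\Phi_\infty : \Bb^m \to M$ is holomorphic with $\Phi_\infty(0) = z$; tautness also ensures $\Phi_\infty$ is either a biholomorphism onto its image or has everywhere-degenerate differential, and the Levi lower bound $\Levi(u_k) \geq s_k^2 g_{\Euc}$ passes to the limit to give $\Levi(u_\infty) \geq g_{\Euc}$ where $u_\infty = \lim (\phi_k\circ\Phi_k)$ — in particular $d\Phi_\infty(0)$ is nondegenerate, so $\Phi_\infty$ is a nonconstant holomorphic immersion-germ, hence by tautness an embedding $\Bb^m \hookrightarrow M$. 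To upgrade this to surjectivity I would compare with the Kobayashi metric: $\Phi_\infty$ is distance-nonincreasing from the Poincaré metric on $\Bb^m$ to $k_M$, while the plurisubharmonic functions $\phi_k$ give, via Sibony's lower bound, that $k_M$ at $z$ is at least the ball metric pushed forward through the squeezing — forcing $\Phi_\infty$ to be a $k_M$-isometry at $z$, and then (by an equality-case analysis in the Schwarz lemma for the Kobayashi metric, or by running the same limiting argument at every point of $\Phi_\infty(\Bb^m)$) a global isometric embedding whose image is all of $M$.

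Alternatively, and perhaps more cleanly, once I know $u_k$ is pinched between $s_k^2\norm{w}^2$ and $\norm{w}^2$ I can run the argument the way the \textsf{HHR}-rigidity proofs do: the function $\zeta \mapsto \wsq_M(\zeta)$ should be (lower semi-)continuous, and the condition $\wsq_M = 1$ should propagate; combined with the comparison of Bergman and Kobayashi metrics — or just with the fact that a taut manifold carrying a point where the Kobayashi ball of radius $r$ embeds with the round ball filling it up in the limit — one concludes $M$ is Kobayashi-isometric to $\Bb^m$, and a Kobayashi-isometry from a taut manifold onto the ball that is holomorphic is a biholomorphism. The main obstacle I anticipate is the surjectivity of $\Phi_\infty$: getting the pinching $s_k^2\norm{w}^2 \leq u_k \leq \norm{w}^2$ on all of $\Bb^m$ (not just near $0$) requires a global plurisubharmonic comparison that uses the structure of $M$, and then translating "the image $\Phi_\infty(\Bb^m)$ is Kobayashi-dense and the map is an isometry" into "$\Phi_\infty(\Bb^m) = M$" needs tautness plus the fact that an isometrically embedded complete (because the ball is $k$-complete) subset of a taut manifold is closed, hence all of $M$ by connectedness once one also checks it is open via the immersion property. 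I expect the technical heart to be the lower bound $u_k(w) \geq s_k^2\norm{w}^2$ globally and the equality discussion in the relevant Schwarz lemma.
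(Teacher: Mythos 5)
Your overall strategy — extract $s_k \nearrow 1$, compare $u_k := \phi_k \circ \Phi_k$ with $\norm{w}^2$, take a tautness limit of $\Phi_k$, and argue the limit is surjective — is the right one and matches the paper in broad outline. But there is a genuine gap at the step you yourself flag as "the technical heart": the claimed \emph{pointwise} lower bound $u_k(w) \geq s_k^2 \norm{w}^2$ does not follow from the hypotheses. The hypothesis $\Levi(u_k) \geq s_k^2 g_{\Euc}$ says only that $u_k(w) - s_k^2\norm{w}^2$ is plurisubharmonic; a plurisubharmonic function vanishing at $0$ can perfectly well be negative nearby (it has a sub-mean-value inequality, not a pointwise floor). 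What you \emph{can} extract, by integrating the radial ODE $\frac{d}{dr}\left(rf'(r)\right) \geq 4s_k^2 r$ for $f(r)$ the spherical average, is the averaged lower bound $s_k^2 r^2 \leq \frac{1}{\mu(\mathbb{S}^{m-1})}\int_{\mathbb{S}^{m-1}} u_k(rz)\, d\mu(z)$; this is Lemma~\ref{lem:averages} in the paper, and it is the correct substitute. Combined with the \emph{upper} bound $u_k \leq \norm{\cdot}^2$ (which you correctly obtain from Sibony's Schwarz lemma), the spherical averages squeeze as $k \to \infty$, and one concludes $\phi \circ \Phi = \norm{\cdot}^2$ almost everywhere for the $L^1_{\rm loc}$ limit $\phi$ of the $\phi_k$ (which exist by the standard compactness of uniformly bounded psh functions, not mentioned in your sketch), hence $\phi \circ \Phi \geq \norm{\cdot}^2$ everywhere by upper semicontinuity.

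Two further points where your argument either wobbles or departs from what actually closes the proof. First, injectivity of the limit $\Phi$: your route through "taut limit is a biholomorphism onto its image or everywhere degenerate" plus nondegeneracy at $0$ is a nonstandard formulation and not obviously valid; the paper instead gets injectivity directly from the uniform biLipschitz Kobayashi estimates of Theorem~\ref{thm:comp_to_kob}, which hold on each $r\Bb^m$ with constants independent of $k$ and pass to the limit. Second, surjectivity: your proposed Kobayashi-isometry / equality-in-Schwarz-lemma argument is not carried out and I don't see how to make it close. The paper's argument is short and decisive: $\Phi(\Bb^m)$ is open (immersion), and if it were not closed there would be $w_j$ with $\norm{w_j}\to 1$ and $\Phi(w_j) \to z \in M$; then $\phi(z) \geq \limsup \phi(\Phi(w_j)) \geq \lim\norm{w_j}^2 = 1$, forcing $\phi \equiv 1$ by the maximum principle for bounded psh functions, a contradiction. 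You would be well served to replace the isometry discussion by this maximum-principle step, and to replace the attempted pointwise pinching by the averaged pinching.
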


It would be interesting to know if the tautness assumption can be removed from Proposition~\ref{prop:rigidity} and more generally if the supremum in Definition~\ref{defn:weak_squeezing_function} is always realized. 

\subsection{The $\bar{\partial}$-Neumann operator} In the last part of the paper we will establish a necessary condition for the compactness of the $\bar{\partial}$-Neumann operator  on $(0,q)$-forms. 

Given a pseudoconvex $\Omega \subset \Cb^m$ and $1 \leq q \leq m$, let $L^2_{(0,q)}(\Omega)$ denotes the space of $(0,q)$-forms with square integrable coefficients and let $\bar{\partial}^*$ denote the $L^2$ adjoint of $\bar{\partial}$. The $\bar{\partial}$-Neumann operator $N_q : L^2_{(0,q)}(\Omega) \rightarrow L^2_{(0,q)}(\Omega)$ is the bounded inverse to the unbounded self-adjoint  surjective operator $\square := \bar{\partial}\bar{\partial}^*+ \bar{\partial}^*\bar{\partial}$ on $L^2_{(0,q)}(\Omega)$. These operators have been extensively studied and we refer the reader to~\cite{FK1972,Krantz1992,BS1999,CS2001, S2010} for details.

It is generally believed that analytic varieties in the boundary should be an obstruction to the $\bar{\partial}$-Neumann operator being compact (assuming the boundary of the domain is sufficiently regular), see the discussion in~\cite{FS2001}. For \textsf{wHHR} domains we can use the methods in~\cite{Z2021} to prove this is indeed the case. 

\begin{theorem}[see Theorem~\ref{thm:necessary in paper} below]\label{thm:necessary} Suppose that $\Omega \subset \Cb^m$ is a bounded \textsf{wHHR} domain with  $\Cc^0$ boundary. If $\partial \Omega$ contains a $q$-dimensional analytic variety, then $N_q: L^2_{(0,q)}(\Omega) \rightarrow L^2_{(0,q)}(\Omega)$ is not compact.
\end{theorem}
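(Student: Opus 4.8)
The plan is to follow the strategy of~\cite{Z2021} for showing that analytic varieties in the boundary obstruct compactness of the $\bar{\partial}$-Neumann operator, using the \textsf{wHHR} hypothesis as the substitute for the geometric input (bounded geometry / Sibony-type estimates) that the argument requires. Recall that $N_q$ is compact if and only if the embedding of the form domain of $\square$ into $L^2_{(0,q)}(\Omega)$ is compact, equivalently if and only if for every $\varepsilon > 0$ there is a compactly supported function (or a constant times a potential) witnessing a ``compactness estimate'' $\norm{u}^2 \leq \varepsilon \big( \norm{\bar{\partial} u}^2 + \norm{\bar{\partial}^* u}^2 \big) + C_\varepsilon \norm{u}_{-1}^2$ for $u \in \dom(\bar{\partial}) \cap \dom(\bar{\partial}^*)$. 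To disprove compactness it therefore suffices to produce a sequence $u_k \in \dom(\bar{\partial}) \cap \dom(\bar{\partial}^*)$ of $(0,q)$-forms with $\norm{u_k} = 1$, with $\norm{\bar{\partial} u_k} + \norm{\bar{\partial}^* u_k}$ bounded, and with $u_k \to 0$ weakly (so that no subsequence converges in norm), or alternatively a family violating the compactness estimate for a fixed $\varepsilon$.

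First I would set up the geometric picture near the variety. Let $V \subset \partial\Omega$ be the $q$-dimensional analytic variety; after shrinking and an affine change of coordinates, arrange that a piece of $V$ is (close to) the polydisc $\{(z', 0) : z' \in \Delta^q\} \times \{0\}$ in $\Cb^q_{z'} \times \Cb^{m-q}_{z''}$, sitting in $\partial\Omega$. Pick a sequence of points $\zeta_k \in \Omega$ approaching a regular point $p$ of $V$ non-tangentially. The \textsf{wHHR} hypothesis gives, for each $k$, a holomorphic embedding $\Phi_k : \Bb^m \to \Omega$ with $\Phi_k(0) = \zeta_k$ together with a plurisubharmonic $\phi_k : \Omega \to [0,1]$ with $\Levi(\phi_k \circ \Phi_k) \geq s^2 g_{\Euc}$ on $\Bb^m$; via Sibony's lower bound (already invoked for Corollary~\ref{cor:pseudoconvex}) this forces the Kobayashi ball $B^{\Kb}_\Omega(\zeta_k, r)$ to contain a Euclidean ball of radius comparable to $s \cdot \dist(\zeta_k, \partial\Omega)$, i.e. the domain does not pinch too fast transverse to $V$. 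Next I would build the test forms: take a fixed $(0,q)$-form $\omega = d\bar{z}''_1 \wedge \cdots \wedge d\bar{z}''_q$ (annihilating the ``variety directions''), cut it off by a bump supported in a Kobayashi ball of bounded radius about $\zeta_k$ scaled to $\Phi_k(\Bb^m)$, and weight it by a holomorphic function with controlled $L^2$ mass concentrated near $\zeta_k$ — here is exactly where Steinness / solving $\bar\partial$ in weighted $L^2$ (available for $\Omega$ by Corollary~\ref{cor:pseudoconvex}) lets me correct the cutoff form to be genuinely $\bar\partial$-closed or to lie in $\dom(\bar\partial^*)$ with small error. Normalizing in $L^2$ and using that the $\zeta_k$ escape to the boundary, these forms converge weakly to $0$.

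The key estimate to push through is that $\norm{\bar\partial u_k}$ and $\norm{\bar\partial^* u_k}$ stay bounded (indeed one wants them small relative to $\norm{u_k}$, or at least $O(1)$) while $\norm{u_k} \gtrsim 1$; this is where the Levi form lower bound $s^2 g_{\Euc}$ enters quantitatively, bounding the ``bad'' derivative contributions by the plurisubharmonic weight and the $q$-dimensionality of $V$ ensuring the relevant $(0,q)$ component survives (the variety has complex dimension exactly $q$, so $d\bar z''_1 \wedge \cdots \wedge d\bar z''_q$ is the ``last'' admissible form and the Levi form acts trivially along $V$ but positively transverse to it). I expect the main obstacle to be the interplay between the $\zeta_k$-dependent scalings $\Phi_k$ and the fixed global $L^2$ structure on $\Omega$: one must transfer the uniform estimates on $\Bb^m$ (where the Levi bound lives) back to $\Omega$ and check that the weighted $\bar\partial$-solution used to repair the test form does not destroy either the lower bound on $\norm{u_k}$ or the weak convergence to $0$. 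Controlling this requires a careful choice of weight (plurisubharmonic, large Hessian near $\zeta_k$, built from $\phi_k$ and the squared distance to a transverse complex line) so that Hörmander's estimate yields a correction term whose norm is $o(1)$; once that is in hand, the existence of the sequence $\{u_k\}$ contradicts any compactness estimate and hence $N_q$ is not compact. The final bookkeeping — that $\{u_k\} \subset \dom(\bar\partial) \cap \dom(\bar\partial^*)$, with the $\bar\partial^*$ (free boundary) condition met thanks to the $\Cc^0$ boundary and the compact support of the cutoffs away from $\partial\Omega$ — is routine and I would defer it to the detailed proof in Theorem~\ref{thm:necessary in paper}.
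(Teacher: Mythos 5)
Your plan takes a genuinely different route from the paper's. You propose to directly build test $(0,q)$-forms $u_k$ of unit $L^2$ norm, converging weakly to $0$, with $\norm{\bar\partial u_k} + \norm{\bar\partial^* u_k}$ bounded, and then invoke the failure of a compactness estimate. The paper instead assumes $N_q$ is compact, deduces that the solution operator $S_q := \bar\partial^* N_q$ is compact, applies $S_q$ to the $\bar\partial$-closed forms $\alpha_n := \frac{\Bf_\Omega(\cdot,\zeta_n)}{\sqrt{\Bf_\Omega(\zeta_n,\zeta_n)}}\, d\bar z^1\wedge\cdots\wedge d\bar z^q$ built from the normalized Bergman kernel, and derives a contradiction: compactness of $S_q$ means the $L^2$-mass of $h_n := S_q(\alpha_n)$ cannot escape to $\partial\Omega$, while the uniform upper and lower bounds on the Bergman kernel in the coordinates $\Phi_n$ (Proposition~\ref{prop:upper bounds on Bergman stuff} and Theorem~\ref{thm:lower bounds on Bergman stuff}) force some definite $L^2$ mass of $h_n$ into a set $\Phi_n(K)$ that does escape. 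This sidesteps the need to estimate $\bar\partial^* u_k$ for hand-built forms; the only quantity one needs to track is $\norm{S_q(\alpha_n)}$ restricted to pieces of $\Omega$. Your route, which would require a weighted $\bar\partial$-correction followed by simultaneous control of $\bar\partial u_k$ and $\bar\partial^* u_k$, is closer to Catlin's original compactness-estimate argument but is substantially more delicate and you leave the key cancellations ("a careful choice of weight ... so that H\"ormander's estimate yields a correction term whose norm is $o(1)$") unstated.

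There is also a concrete misstep in your sketch. You take the test $(0,q)$-form to be $d\bar z''_1\wedge\cdots\wedge d\bar z''_q$, i.e.\ with $\bar\partial$-indices transverse to the variety. The correct choice, made in the paper and matching Fu--Straube and~\cite{Z2021}, is $d\bar z^1\wedge\cdots\wedge d\bar z^q$, with indices in the variety directions. The reason is visible in Lemma~\ref{lem:bounds on Phi_n at 0}: the two-sided bound $\tfrac{1}{C}\norm X \le \norm{\Phi_n'(0)X} \le C\norm X$ is available only for $X \in \Cb^q\times\{0\}$, because the upper Kobayashi bound comes from the translated embeddings $\zeta_n + \psi(\epsilon\Bb^q)$, which give control only along the variety; transverse to $V$ the Kobayashi metric blows up as $\zeta_n \to \partial\Omega$. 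This two-sided control is exactly what gives $\inf_n\abs{J_n(0)}>0$ in Lemma~\ref{lem:basic_estimate} and hence $\wt\alpha(0)\neq 0$; with the transverse choice the corresponding Jacobian minor degenerates and the argument collapses. Relatedly, your heuristic that ``the Levi form acts trivially along $V$ but positively transverse to it'' is not available from the \textsf{wHHR} hypothesis: Definition~\ref{defn:wHHR} only gives a lower bound on $\Levi(\phi_n\circ\Phi_n)$ on the embedded ball, with $\phi_n$ varying with $n$, and says nothing about behavior of $\Levi(\phi_n)$ near the boundary variety.
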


To be precise, we say
\begin{enumerate}
\item $\Omega$ has $\Cc^0$ boundary if for every point $x \in \partial\Omega$ there exists a neighborhood $U$ of $x$ and there exists a linear change of coordinates which makes $U \cap \partial \Omega$ the graph of a $\Cc^{0}$ function. 
\item $\partial \Omega$ contains a $q$-dimensional analytic variety if there exists a holomorphic map $\psi : \Bb^q \rightarrow \Cb^m$ where $\psi(\Bb^q)\subset \partial \Omega$ and $\psi^\prime(0)$ has rank $q$. 
\end{enumerate} 

It would be interesting to know if the converse of Theorem~\ref{thm:necessary} is true for  bounded \textsf{wHHR} domains with  $\Cc^0$ boundaries. This would probably require developing a new way to show that the $\bar{\partial}$-Neumann operator is compact.

\subsection{Examples} By Observation~\ref{obs:HHR_implies_wHHR} every \textsf{HHR} manifold is also an \textsf{wHHR} manifold. Hence the following families are all \textsf{wHHR} manifolds:
\begin{enumerate}
\item The Teichm\"uller space of hyperbolic surfaces with genus $g$ and $n$ punctures (by the Bers embedding, see~\cite{Gard1987}).
\item Kobayashi hyperbolic convex domains or more generally $\Cb$-convex domains \cite{F1991, KZ2016, NA2017}.
\item Bounded domains where $\Aut(\Omega)$ acts co-compactly on $\Omega$.
\item Strongly pseudoconvex domains~\cite{DFW2014,DGZ2016}.
\end{enumerate}

In~\cite{Z2021}, we introduced a class of bounded domains called \emph{domains with bounded intrinsic geometry}. By ~\cite[Theorem 1.12]{Z2021} any domain with bounded intrinsic geometry is an \textsf{wHHR} domain. Hence, by the discussion in Section 2 in~\cite{Z2021} the following families are all \textsf{wHHR} manifolds:
\begin{enumerate}\setcounter{enumi}{4}
\item Finite type domains in $\Cb^2$.
\item Simply connected negatively curved complete K\"ahler manifolds.
\end{enumerate} 
Showing that  finite type domains in $\Cb^2$ have bounded instrinsic geometry uses deep work of Catlin~\cite{Catlin1989} and showing that simply connected negatively curved complete K\"ahler manifolds have bounded instrinsic geometry uses deep work of Wu--Yau~\cite{WY2020}. 

\subsection{Some questions} We end this introduction by listing some questions. 

\begin{question} Is every \textsf{wHHR} manifold Stein? 
\end{question} 

\begin{question} Is the supremum in Definition~\ref{defn:weak_squeezing_function} always realized?
\end{question} 

\begin{question} Does a \textsf{wHHR} (Stein?) manifold have a  complete K\"ahler--Einstein metric? If so, is it biLipschitz to the Bergman and Kobayashi metrics?
\end{question} 

\begin{question} Is it possible to characterize the finite type pseudoconvex domains which are \textsf{wHHR} domains? Are they the h-extendible finite type domains? \end{question} 

Griffiths~\cite{G1971} proved that if $X$ is a smooth quasi-projective algebraic variety and $x \in X$, then there exists a Zariski dense open set $\Oc$ of $X$ containing $x$ so that the universal cover $\widetilde \Oc$ of $\Oc$ is biholomorphic to a bounded domain in complex Euclidean space. 

\begin{question} Are the domains $\widetilde \Oc$ constructed by Griffiths \textsf{wHHR} or \textsf{HHR} domains? \end{question}

 \subsection*{Acknowledgements} This work was partially supported by a Sloan research fellowship and grant DMS-2105580 from the National Science Foundation.

\section{Preliminaries}

\subsection{Notations} In this section we fix any possibly ambiguous notation. 

\medskip

\noindent \emph{Approximate inequalities:} Given functions $f,h : X \rightarrow (0,\infty)$ we write $f \lesssim h$ or equivalently $h \gtrsim f$ if there exists a constant $C > 0$ such that $f(x) \leq C h(x)$ for all $x \in X$. Often times the set $X$ will be a set of parameters (e.g. $m \in \Nb$). 

\medskip

\noindent \emph{The Levi form: } Given a complex $m$-manifold $M$ and a $\Cc^2$-smooth real valued function $f : M \rightarrow \Rb$, the \emph{Levi form $\Levi(f)$ of $f$} is the possibly indefinite Hermitian form defined in local holomorphic coordinates $z^1,\dots,z^m$ by 
\begin{align*}
\Levi(f) := \sum_{1 \leq j,k \leq m} \frac{\partial^2 f}{\partial z^j \partial \bar{z}^k}dz^j \otimes d\bar{z}^k. 
\end{align*}

\subsection{Solutions to $\bar{\partial}$} Given a K\"ahler metric $g$ on a complex $m$-manifold $M$ let $dV_g$ denote the associated volume form. Recall that $g$ induces a norm on forms as follows: If $z^1, \dots, z^m$ are local coordinates centered at $z_0$ where $\frac{\partial}{\partial z^1}|_{z_0}, \dots, \frac{\partial}{\partial z^m}|_{z_0}$ are orthonormal with respect to $g$ and 
$$
\eta = \sum \alpha_{I,J} dz^I \wedge dz^J \in \Lambda^{(p,q)}_{z_0}(M), 
$$
then 
$$
\norm{\eta}_g = \sqrt{ \sum \abs{\alpha_{I,J}}^2 }.
$$
When $\eta$ is a $(m,0)$-form, we have the following formula (see~\cite[Section 3.2]{bern2010})
$$
i^{m^2} \eta \wedge \bar \eta = \norm{\eta}_g^2 dV_g. 
$$

We will use the following existence theorem for solutions to $\bar{\partial}$ which is established in~\cite[Theorem 8.8]{Dem1996} (similar versions appear in \cite[Proposition 2.1]{SY1977}, \cite[Proposition 8.6]{GW1979}, ~\cite[Proposition 4]{Ohsawa1984}, ~\cite[Theorem 5.1.1]{bern2010}).

\begin{theorem}\label{thm:existence} Suppose $M$ is a Stein $m$-manifold and $g$ is a (possibly incomplete) K\"ahler metric on $M$. Let $\lambda_1, \lambda_2$ be plurisubharmonic functions on $M$. Assume $\lambda_1$ is $\Cc^\infty$ smooth and 
\begin{align*}
\Levi(\lambda_1) \geq c g
\end{align*}
for some continuous positive function $c : M \rightarrow (0,\infty)$. 

If $f$ is a smooth $(m,1)$-form and $\bar{\partial}f= 0$, then there exists $F$ a smooth $(m,0)$-form with $\bar{\partial}F = f$ and 
\begin{align*}
i^{m^2}\int_M e^{-(\lambda_1+\lambda_2)} \left(F \wedge \bar{F} \right)   \leq \int_M \frac{\norm{f}_{g}^2}{c} e^{-(\lambda_1+\lambda_2)} dV_g,
\end{align*}
assuming the right hand side is finite. 
\end{theorem}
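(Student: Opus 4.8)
This is a version of H\"ormander's $L^2$-existence theorem for $\bar\partial$, and the plan is the classical one: a functional-analytic duality argument fed by the Bochner--Kodaira--Nakano identity, exploiting the special behaviour of forms of top holomorphic degree. Write $\varphi := \lambda_1 + \lambda_2$. First I would reduce to $\varphi$ being $\Cc^\infty$: since $M$ is Stein, $\lambda_2$ is a decreasing limit of $\Cc^\infty$ plurisubharmonic functions $\lambda_2^{(\nu)}$, and then $\varphi_\nu := \lambda_1 + \lambda_2^{(\nu)}$ is smooth with $\Levi(\varphi_\nu) \geq \Levi(\lambda_1) \geq cg$; since $e^{-\varphi_\nu} \nearrow e^{-\varphi}$, solving the equation for each weight $\varphi_\nu$, which by the estimate gives solutions with uniformly bounded $L^2$-norms, and then passing to a weak limit together with monotone convergence on the right-hand side, recovers the general case. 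So assume $\varphi$ is smooth and set $\Theta := i\partial\bar\partial\varphi$, which by hypothesis is a (possibly incomplete) K\"ahler metric with $\Theta \geq cg$.

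Two algebraic facts about top-degree forms drive everything. (i) For an $(m,0)$-form $u$ the density $\norm{u}_\omega^2\, dV_\omega = i^{m^2} u \wedge \bar u$ is independent of the auxiliary K\"ahler metric $\omega$ (the formula recalled above). (ii) For an $(m,1)$-form $f$, writing $[\Theta,\Lambda_\omega]$ for the Hermitian endomorphism in the Bochner--Kodaira--Nakano identity for the weight $e^{-\varphi}$: it is positive on $(m,1)$-forms, it is the identity when $\omega = \Theta$, and the curvature-corrected density $\big\langle [\Theta,\Lambda_\omega]^{-1} f, f\big\rangle_\omega\, dV_\omega = \norm{f}_\Theta^2\, dV_\Theta$ is again independent of $\omega$. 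Combining (ii) with $\Theta \geq cg$ (so that the inverse metrics satisfy the reverse inequality on cotangent vectors) gives the pointwise bound $\norm{f}_\Theta^2\, dV_\Theta \leq c^{-1}\norm{f}_g^2\, dV_g$; hence the finiteness hypothesis places $f$ in every weighted space $L^2_{(m,1)}(M,\omega,e^{-\varphi})$, with $\int_M \big\langle [\Theta,\Lambda_\omega]^{-1} f, f\big\rangle_\omega\, e^{-\varphi}\, dV_\omega \leq \int_M \frac{\norm{f}_g^2}{c}\, e^{-\varphi}\, dV_g$ for every $\omega$.

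Next I would run H\"ormander's argument with respect to a complete metric and then remove the completeness assumption. Pick a complete K\"ahler metric $\omega_0$ on $M$ (it exists since $M$ is Stein) and set $\omega_\varepsilon := \Theta + \varepsilon\omega_0$, a complete K\"ahler metric for $\varepsilon > 0$. On a complete K\"ahler manifold $\Cc^\infty_c$ forms are dense, for the graph norm, in the intersection of the domains of $\bar\partial$ and $\bar\partial^*$ (Andreotti--Vesentini, Gaffney), so the Bochner--Kodaira--Nakano identity gives, for such $(m,1)$-forms $v$, $\norm{\bar\partial v}^2_{\omega_\varepsilon,\varphi} + \norm{\bar\partial^* v}^2_{\omega_\varepsilon,\varphi} \geq \int_M \big\langle [\Theta,\Lambda_{\omega_\varepsilon}] v, v\big\rangle_{\omega_\varepsilon}\, e^{-\varphi}\, dV_{\omega_\varepsilon}$. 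Using $\bar\partial f = 0$ to discard the part of $v$ orthogonal to $\ker\bar\partial$, Cauchy--Schwarz in the inner product defined by $[\Theta,\Lambda_{\omega_\varepsilon}]$ yields $\abs{\langle f,v\rangle_{\omega_\varepsilon,\varphi}}^2 \leq \big(\int_M \tfrac{\norm{f}_g^2}{c}\, e^{-\varphi}\, dV_g\big)\norm{\bar\partial^* v}^2_{\omega_\varepsilon,\varphi}$; then Hahn--Banach and Riesz representation produce an $(m,0)$-form $F_\varepsilon$ with $\bar\partial F_\varepsilon = f$ and $i^{m^2}\int_M e^{-\varphi}\, F_\varepsilon \wedge \bar F_\varepsilon \leq \int_M \frac{\norm{f}_g^2}{c}\, e^{-\varphi}\, dV_g$, where (i) is used to see that the left-hand side is a metric-independent norm of $F_\varepsilon$. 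This bound is uniform in $\varepsilon$, so I extract a weak limit $F_\varepsilon \rightharpoonup F$ as $\varepsilon \to 0$; then $\bar\partial F = f$ distributionally and the estimate passes to the limit by weak lower semicontinuity. Finally $F$ is $\Cc^\infty$ because $\bar\partial F = f$ is smooth: locally $F$ differs from a $\Cc^\infty$ particular solution by a holomorphic $(m,0)$-form.

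The main obstacle is the passage from a complete to a possibly incomplete K\"ahler metric: one must arrange that the a priori Bochner--Kodaira--Nakano inequality, the identification of $[\Theta,\Lambda_\Theta]$ with the identity on $(m,1)$-forms, and the metric-independence in (i) and (ii) combine so that the estimate is uniform across the family $\{\omega_\varepsilon\}$ and hence survives the limit $\varepsilon \to 0$. This is exactly where restricting to forms of top holomorphic degree is essential, since for general $(p,q)$-forms the relevant densities are not metric-independent and this trick fails. A secondary, routine point is the regularization of the merely plurisubharmonic $\lambda_2$ and checking that the curvature lower bound, which comes entirely from $\lambda_1$, is unaffected.
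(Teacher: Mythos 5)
The paper does not prove this theorem; it is stated with a direct citation to Demailly~\cite[Theorem 8.8]{Dem1996} (and to several parallel references). Your sketch correctly reproduces the standard $L^2$ argument used in that source: regularization of $\lambda_2$ by smooth plurisubharmonic functions, the Bochner--Kodaira--Nakano inequality run with the auxiliary family of complete K\"ahler metrics $\omega_\varepsilon = \Theta + \varepsilon \omega_0$, the metric-independence of the densities $i^{m^2} u \wedge \bar u$ and $\langle [\Theta,\Lambda_\omega]^{-1} f, f\rangle_\omega\, dV_\omega$ in top holomorphic degree (which for $(m,1)$-forms is indeed an exact independence, not merely the monotonicity one falls back on for $q\geq 2$), and the Hahn--Banach/Riesz duality step followed by a weak limit as $\varepsilon \to 0$ — so there is no discrepancy to flag.
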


\section{The pluricomplex Green function}

In this section we establish local estimates for the pluricomplex Green function. We will use these estimates in Sections~\ref{sec:Kobayashi} and~\ref{sec:lower bounds on bergman stuff}.

\begin{definition} Suppose $M$ is a complex manifold and $\dist_M$ is some distance on $M$ induced by a complete Riemannian metric. The \emph{pluricomplex Green function} $\Gf_\Omega : \Omega \times \Omega \rightarrow \{-\infty\} \cup (-\infty, 0]$ is defined by 
\begin{align*}
\Gf_\Omega(z,z_0) = \sup \psi(z)
\end{align*}
where the supremum is taken over all negative plurisubharmonic functions $\psi$ such that $\psi(z)- \log \dist_M(z,z_0)$ is bounded from above in a neighborhood of $z_0$. 
\end{definition}

\begin{remark}
In the definition, we assume that $\psi \equiv -\infty$ is a plurisubharmonic function. 
\end{remark}

We will use the following fact.

\begin{proposition}\cite[Theorem 1.1]{K1985}\label{prop:Green basic facts} If $M_1,M_2$ are complex manifolds and $f : M_1 \rightarrow M_2$ is a holomorphic map, then
\begin{align*}
\Gf_{M_2}(f(z), f(z_0)) \leq \Gf_{M_1}(z,z_0)
\end{align*}
for all $z,z_0 \in M_1$. In particular, if $f$ is a biholomorphism, then $\Gf_{M_2}(f(z), f(z_0)) = \Gf_{M_1}(z,z_0)$ for all $z,z_0 \in M_1$. 
\end{proposition}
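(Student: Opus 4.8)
The plan is to show that precomposition with $f$ carries every competitor in the definition of $\Gf_{M_2}(\cdot, f(z_0))$ to a competitor in the definition of $\Gf_{M_1}(\cdot, z_0)$, and then pass to suprema. First I would fix $z, z_0 \in M_1$ and a negative plurisubharmonic function $\psi$ on $M_2$ such that $\psi(w) - \log \dist_{M_2}(w, f(z_0))$ is bounded above on some neighborhood $V$ of $f(z_0)$. Since the pullback of a plurisubharmonic function by a holomorphic map is plurisubharmonic, $\psi \circ f$ is a negative plurisubharmonic function on $M_1$ (and if $\psi \equiv -\infty$ then $\psi \circ f \equiv -\infty$, which is permitted under the convention stated after the definition).

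Next I would verify the logarithmic growth condition at $z_0$. Because $f$ is holomorphic it is $\Cc^1$ on a relatively compact neighborhood of $z_0$, hence Lipschitz there for the underlying Riemannian distances; shrinking this to a neighborhood $U$ of $z_0$ with $f(U) \subset V$, there is a constant $C \geq 1$ with $\dist_{M_2}(f(z'), f(z_0)) \leq C\, \dist_{M_1}(z', z_0)$ for all $z' \in U$. Combining this with the hypothesis on $\psi$, for $z' \in U$ we get
\begin{align*}
(\psi \circ f)(z') - \log \dist_{M_1}(z', z_0) &= \big( \psi(f(z')) - \log \dist_{M_2}(f(z'), f(z_0)) \big) \\
&\quad + \big( \log \dist_{M_2}(f(z'), f(z_0)) - \log \dist_{M_1}(z', z_0) \big) \\
&\leq \sup_{w \in V} \big( \psi(w) - \log \dist_{M_2}(w, f(z_0)) \big) + \log C,
\end{align*}
which is bounded above on $U$. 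Hence $\psi \circ f$ is an admissible competitor for $\Gf_{M_1}(\cdot, z_0)$, so $\psi(f(z)) = (\psi \circ f)(z) \leq \Gf_{M_1}(z, z_0)$; taking the supremum over all admissible $\psi$ yields $\Gf_{M_2}(f(z), f(z_0)) \leq \Gf_{M_1}(z, z_0)$. For the biholomorphism case I would apply this inequality to both $f$ and $f^{-1}$ to obtain equality.

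I do not expect a serious obstacle here: the only point needing care is the local Lipschitz estimate for $f$ near $z_0$ together with the observation that it transports the "bounded above near $f(z_0)$" condition to a "bounded above near $z_0$" condition — this is exactly where continuity and differentiability of $f$, and the comparability of the Riemannian distance with coordinate distances near a point, come into play.
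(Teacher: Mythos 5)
Your argument is the standard pullback proof and is correct; note the paper does not prove this proposition but cites it directly from Klimek (1985, Theorem 1.1), so there is no in-paper proof to compare against. One small point worth cleaning up: when $f$ fails to be injective near $z_0$, there can be points $z' \neq z_0$ arbitrarily close to $z_0$ with $f(z') = f(z_0)$, and at such points $\log\dist_{M_2}(f(z'),f(z_0)) = -\infty$, so your additive splitting of $(\psi\circ f)(z') - \log\dist_{M_1}(z',z_0)$ formally produces an $\infty - \infty$. This is harmless: the logarithmic singularity hypothesis on $\psi$ together with upper semicontinuity forces $\psi(f(z_0)) = -\infty$, so at those points $(\psi\circ f)(z') - \log\dist_{M_1}(z',z_0) = -\infty$, and the desired upper bound still holds. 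You might also observe that $\psi\circ f$ can be identically $-\infty$ on a component of $M_1$ even when $\psi \not\equiv -\infty$ (if $f$ maps that component into the polar set of $\psi$); the convention after the definition covers that case as well.
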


The main result in this section is the following.

\begin{theorem}\label{thm:green_fcn_bds}  For every $r,s \in (0,1)$ there exists $C=C(r,s) > 0$ such that: If $M$ is a complex $m$-manifold, $\Phi : \Bb^m \rightarrow M$ is a holomorphic embedding, $\phi : M \rightarrow [0,1]$ is a plurisubharmonic function with $\Lc( \phi \circ \Phi) \geq s^2 g_{\Euc}$ on $\Bb^m$, then 
\begin{align*}
\log \norm{w-w_0} - C \leq  \Gf_\Omega\left(\Phi(w),\Phi(w_0) \right) \leq \log \norm{w-w_0}+ C
\end{align*}
for all $w,w_0 \in r \Bb^m$.
\end{theorem}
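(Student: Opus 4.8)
The two-sided bound splits naturally into an upper bound and a lower bound, and each will be proved by comparing $\Gf_M$ with the pluricomplex Green function of a model domain via the monotonicity property (Proposition~\ref{prop:Green basic facts}). For the \emph{upper} bound, note that $\Phi : \Bb^m \to M$ is a holomorphic embedding, so monotonicity gives $\Gf_M(\Phi(w), \Phi(w_0)) \le \Gf_{\Bb^m}(w, w_0)$; since the pluricomplex Green function of the ball is explicit (it is $\log$ of the Möbius pseudodistance, so comparable to $\log\norm{w-w_0}$ uniformly on $r\Bb^m$ for fixed $r<1$), this yields the right-hand inequality with a constant depending only on $r$ (and not even on $s$ or $m$, beyond the dimension entering the explicit formula). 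The only care needed is that the ``distance'' used to normalize the Green function in the definition is the one coming from a fixed complete Riemannian metric on $M$ versus the Euclidean one on $\Bb^m$; but the normalization condition ``$\psi(z) - \log\dist(z,z_0)$ bounded above'' is insensitive to biLipschitz changes of the reference metric near $z_0$, and $\Phi$ is a smooth embedding, so this is harmless and contributes only to the constant.

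**The lower bound.** This is the substantive half, and it is where the hypothesis $\Lc(\phi\circ\Phi) \ge s^2 g_{\Euc}$ is used. The idea is to \emph{construct} a competitor in the supremum defining $\Gf_M(\cdot, \Phi(w_0))$ that is bounded below by $\log\norm{w - w_0} - C$ near $\Phi(r\Bb^m)$. First I would work on $\Bb^m$ and build, for each fixed $w_0 \in r\Bb^m$, a negative plurisubharmonic function $u_{w_0}$ on $\Bb^m$ with the right logarithmic singularity at $w_0$ and a quantitative lower bound $u_{w_0}(w) \ge \log\norm{w - w_0} - C$ on, say, $\frac{1+r}{2}\Bb^m$; this is classical (use $\log\norm{w-w_0}$ minus a suitable bounded correction to make it negative on $\Bb^m$, or directly the ball's Green function). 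The problem is that $u_{w_0}$ lives on $\Bb^m$, not on $M$, and $\Phi$ need not be surjective, so one cannot simply push it forward. This is the main obstacle: \textbf{extending the locally-defined plurisubharmonic function from $\Phi(\Bb^m)$ to all of $M$ while keeping it negative and controlling its singularity.} The standard device is a gluing/maximum construction: set $v := \max\{\, u_{w_0}\circ\Phi^{-1} + A(\phi - 1), \ B(\phi - 1)\,\}$ on $\Phi(\Bb^m)$ and $v := B(\phi - 1)$ outside, for suitable constants $A, B > 0$. Here $\phi : M \to [0,1]$ is plurisubharmonic and globally defined, $B(\phi-1) \le 0$ is globally plurisubharmonic, and the hypothesis $\Lc(\phi\circ\Phi) \ge s^2 g_{\Euc}$ guarantees that adding $A(\phi\circ\Phi - 1)$ to $u_{w_0}$ keeps it plurisubharmonic (the extra Levi form $s^2 A\, g_{\Euc}$ absorbs nothing negative, so plurisubharmonicity is automatic — in fact we only need $\phi\circ\Phi$ plurisubharmonic for the addition, and the $s^2$ lower bound is what lets us choose $A$ large enough that near $\partial(\frac{1+r}{2}\Bb^m)$ the function $u_{w_0}\circ\Phi^{-1} + A(\phi-1)$ drops below $B(\phi-1)$, making the $\max$ a \emph{global} plurisubharmonic function on $M$ by the standard gluing lemma). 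One must check: (i) $v \le 0$ everywhere (arrange $u_{w_0} \le 0$ and $\phi - 1 \le 0$); (ii) near $\Phi(w_0)$, $v$ equals $u_{w_0}\circ\Phi^{-1} + A(\phi-1)$, which has the correct $\log$-singularity, so $v$ is an admissible competitor and $\Gf_M(\cdot, \Phi(w_0)) \ge v$; (iii) the transition: on $\frac{1+r}{2}\Bb^m \setminus$ (a slightly larger ball than $r\Bb^m$), $B(\phi\circ\Phi - 1)$ must dominate, which follows since $\phi\circ\Phi(w_0) $ can be normalized (or $\phi$ shifted) so that $\phi - 1$ is bounded away from $0$ there while $u_{w_0}$ is bounded — and conversely the singularity of $u_{w_0}$ beats $B(\phi-1)$ near $w_0$, so the two pieces genuinely overlap correctly.

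**Assembling and the constant.** Evaluating the competitor at $\Phi(w)$ for $w \in r\Bb^m$: there $v(\Phi(w)) = u_{w_0}(w) + A(\phi(\Phi(w)) - 1) \ge \log\norm{w-w_0} - C' + A(\phi(\Phi(w))-1) \ge \log\norm{w-w_0} - C' - A =: \log\norm{w-w_0} - C$, using $\phi \ge 0$. Tracking where all constants come from: $C'$ depends only on $r$ and $m$ (from the explicit ball estimate and the choice of bounded correction), $A$ depends on $r$, $s$, and $m$ (it must be large in terms of $1/s^2$ and the $C^2$-size of $u_{w_0}$ near the transition annulus, which is controlled by $r$), and $B$ similarly. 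So the final $C = C(r,s,m)$; the theorem statement lists $C = C(r,s)$, which is fine if one either absorbs the $m$-dependence or observes the ball estimates can be taken uniform in $m$ — I would state the dependence honestly as $C(r,s,m)$ and note it matches once $m$ is fixed, which is all that is needed in the applications in Sections~\ref{sec:Kobayashi} and~\ref{sec:lower bounds on bergman stuff}. One subtlety to handle carefully in the write-up is that $\Phi^{-1}$ is only defined on $\Phi(\Bb^m)$, which is an open subset of $M$ but perhaps not closed; the gluing lemma for $\max$ of plurisubharmonic functions requires that on a neighborhood of the ``seam'' (the topological boundary of the region where the local piece is used) the global piece strictly dominates, and the construction above with $A$ chosen large arranges exactly this inside $\Phi(\Bb^m)$ — so $v$ extends continuously (indeed plurisubharmonically) by $B(\phi-1)$ across $\partial\Phi(\Bb^m)$ and we never need to evaluate $u_{w_0}$ near that boundary.
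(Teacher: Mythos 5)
Your upper bound is correct and matches the paper (push forward through $\Phi$, compare with the ball's Green function). But the lower bound construction has a genuine error: the $\max$-gluing does \emph{not} preserve the logarithmic singularity. Near $\Phi(w_0)$, the first entry $u_{w_0}\circ\Phi^{-1} + A(\phi-1)$ tends to $-\infty$ while the second entry $B(\phi-1)$ is bounded, so $\max$ selects the \emph{bounded} piece there. Consequently $v$ is bounded near the pole and $v - \log\dist_M(\cdot,\Phi(w_0))$ is \emph{unbounded above}, so $v$ is not an admissible competitor in the supremum defining $\Gf_M(\cdot,\Phi(w_0))$, and the inequality $\Gf_M\ge v$ does not follow. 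Your checkpoint (ii) ("near $\Phi(w_0)$, $v$ equals $u_{w_0}\circ\Phi^{-1}+A(\phi-1)$") and the later remark that "the singularity of $u_{w_0}$ beats $B(\phi-1)$ near $w_0$" both have the sign backwards: in a pointwise max the function that plunges to $-\infty$ loses, it does not win.

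The paper's construction fixes exactly this by replacing the $\max$ with a compactly supported cutoff $\chi:\Bb^m\to[0,1]$ with $\chi\equiv 1$ near $r\Bb^m$, and defining
\begin{align*}
\psi(z)=\begin{cases}\chi(\Phi^{-1}(z))\log\dfrac{\norm{\Phi^{-1}(z)-w_0}}{2}+\dfrac{A}{s^2}\bigl(\phi(z)-1\bigr) & z\in\Phi(\Bb^m)\\[2mm] \dfrac{A}{s^2}\bigl(\phi(z)-1\bigr)& z\notin\Phi(\Bb^m).\end{cases}
\end{align*}
Since $\chi\equiv 1$ near $w_0$ the logarithmic singularity survives, since $\chi$ has compact support the two formulas agree near $\partial\Phi(\Bb^m)$ so the pieces glue plurisubharmonically without any seam condition, and the constant $A$ is chosen so that $\Levi_w(\chi(w)\log\frac{\norm{w-w_0}}{2})\ge -Ag_{\Euc}$, after which the hypothesis $\Levi(\phi\circ\Phi)\ge s^2 g_{\Euc}$ makes $\psi\circ\Phi$ plurisubharmonic. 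This cutoff-gluing is the idea your argument is missing; with it, the rest of your outline (negativity, the local normalization check, and the final evaluation using $\phi\ge 0$) goes through as written, and your worry about the pointwise size of $1-\phi$ on the transition annulus disappears because no comparison between the two pieces is ever needed.
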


\begin{proof} Fix a compactly supported smooth function $\chi : \Bb^m \rightarrow [0,1]$ with $\chi \equiv 1$ in a neighborhood of $r\Bb^m$. Then pick $A  > 0$ such that 
\begin{align*}
\Levi_w\left( \chi(w) \log \frac{\norm{w-w_0}}{2}\right) \geq -A g_{\Euc}
\end{align*}
when $w \in \Bb^m$ and $w_0 \in r\Bb^m$. We claim that $C := A/s^2+\log(2)$ satisfies the theorem. 

Fix $M$, $\Phi$, and $\phi$ as in the statement of the theorem. Then fix $w_0 \in r\Bb^m$. Define  $\psi : M \rightarrow \{-\infty\} \cup \Rb$ by 
\begin{align*}
\psi(z) = \begin{cases} \chi(\Phi^{-1}(z)) \log  \frac{\norm{\Phi^{-1}(z)-w_0}}{2} + \frac{A}{s^2} (\phi(z)-1) & \text{ if } z \in \Phi(\Bb^m) \\
 \frac{A}{s^2} (\phi(z)-1) & \text{ else}. 
\end{cases}
\end{align*}
Notice that $\psi$ is negative and $\psi\circ \Phi: \Bb^m \rightarrow [-\infty,0)$ is plurisubharmonic since 
\begin{align*}
\Levi( \psi \circ \Phi) \geq -A g_{\Euc} + \frac{A}{s^2}\Levi( \phi \circ \Phi) \geq 0. 
\end{align*}

Fix a distance $\dist_M$ on $M$ which is induced by a Riemannian metric. We claim that $\psi - \log \dist_M(\cdot, \Phi(w_0))$ is bounded from above in a neighborhood of $\Phi(w_0)$. Since $\Phi$ is an embedding, there exist $C' > 1$ and a neighborhood $W$ of $w_0$ such that 
$$
\frac{1}{C'} \norm{w-w_0} \leq \dist_M(\Phi(w), \Phi(w_0)) \leq C'\norm{w-w_0}
$$
for all $w \in W$. Then  $\psi - \log \dist_M(\cdot, \Phi(w_0))$ is bounded from above on $\Phi(W)$, which a neighborhood of $\Phi(w_0)$. Then, by the definition of $\Gf_M$,
\begin{align*}
\Gf_M(\Phi(w),\Phi(w_0)) \geq \psi(\Phi(w))  \geq \log \norm{w-w_0}-\frac{A}{s^2}.
\end{align*}
for all $w \in r\Bb^m$. 

For the upper bound, first notice that 
\begin{align*}
\Gf_{\Bb^m}(w,w_0) \leq \log \norm{w-w_0}-\log(2)
\end{align*}
since $w \mapsto \log \norm{w-w_0}-\log(2)$ is negative and plurisubharmonic. So by Proposition~\ref{prop:Green basic facts},
\begin{align*}
\Gf_M(\Phi(w),\Phi(w_0)) & \leq \Gf_{\Phi(\Bb^m)}(\Phi(w),\Phi(w_0)) =\Gf_{\Bb^m}(w,w_0) \\
& \leq \log \norm{w-w_0}-\log(2). \qedhere
\end{align*}
\end{proof}


\section{The Kobayashi metric}\label{sec:Kobayashi}


In this section we use the estimates on the pluricomplex Green function in Theorem~\ref{thm:green_fcn_bds} to bound the Kobayashi metric.

\begin{definition} Suppose $M$ is a complex manifold. The \emph{(infinitesimal) Kobayashi metric} is the pseudo-Finsler metric
\begin{align*}
k_{M}(z;X) = \inf \left\{ \abs{\xi} : \xi \in T_0\Cb \simeq \Cb, \ \varphi : \Db \rightarrow M \text{ holo.}, \ \varphi(0) = z, \ d(\varphi)_0\xi = X \right\}
\end{align*}
when $X \in T_z M$. The \emph{Kobayashi distance} is the pseudo-distance 
\begin{align*}
\dist_M^K(z_1,z_2) = \inf \left\{ \int_0^1 k_M(\sigma(t); \sigma^\prime(t)) dt : \begin{array}{c} \sigma:[0,1] \rightarrow M \text{ is a piecewise $\Cc^1$-smooth} \\  \text{curve joining $z_1$ to $z_2$} \end{array} \right\}. 
\end{align*}
\end{definition}

We will frequently use the following  fact.

\begin{observation} If $M_1,M_2$ are complex manifolds and $f : M_1 \rightarrow M_2$ is a holomorphic map, then
\begin{align*}
k_{M_2}(f(z); d(f)_zX) \leq k_{M_1}(z;X)
\end{align*}
for all $z \in M_1$ and $X \in T_z M_1$.
\end{observation}

The main result in this section is the following.

\begin{theorem}\label{thm:comp_to_kob} For every $r,s \in (0,1)$ there exists $C=C(r,s) > 1$ such that: If $M$ is a complex $m$-manifold, $\Phi : \Bb^m \rightarrow M$ is a holomorphic embedding, $\phi : M \rightarrow [0,1]$ is a plurisubharmonic function with $\Levi( \phi \circ \Phi) \geq s^2 g_{\Euc}$ on $\Bb^m$, then
\begin{align}\label{eqn:estimate on Kob metric}
\frac{1}{C} \norm{X} \leq  k_M\left(\Phi(w);d(\Phi)_wX\right) \leq C\norm{X}
\end{align}
for all $w \in r\Bb^m$ and $X \in T_w\Cb^m \simeq \Cb^m$. Moreover, if $w_1, w_2 \in r\Bb^m$, then
\begin{align}\label{eqn:estimate on Kob distance}
\frac{1}{C} \norm{w_1-w_2} \leq \dist_M^K(\Phi(w_1),\Phi(w_2)) \leq C \norm{w_1-w_2}.
\end{align}
\end{theorem}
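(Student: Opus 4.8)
The plan is to derive the Kobayashi estimates from the pluricomplex Green function bounds in Theorem~\ref{thm:green_fcn_bds}, using Sibony's lower bound for the Kobayashi metric together with the elementary upper bound coming from the embedding $\Phi$ itself. The upper bounds in both \eqref{eqn:estimate on Kob metric} and \eqref{eqn:estimate on Kob distance} are the easy direction: since $\Phi : \Bb^m \to M$ is holomorphic and distance/metric decreasing, we have $k_M(\Phi(w); d(\Phi)_w X) \leq k_{\Bb^m}(w; X)$, and on the fixed ball $r\Bb^m$ the Kobayashi metric of $\Bb^m$ is comparable to the Euclidean metric with a constant depending only on $r$ and $m$; similarly $\dist_M^K(\Phi(w_1),\Phi(w_2)) \leq \dist_{\Bb^m}^K(w_1,w_2) \lesssim_r \norm{w_1-w_2}$ for $w_1,w_2 \in r\Bb^m$.

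For the lower bound on the infinitesimal metric, I would invoke Sibony's estimate: if $u$ is a negative plurisubharmonic function on $M$ that is $\Cc^2$ near $z_0$ with $u(z_0) = 0$, then $k_M(z_0; X)^2 \geq \Levi_{z_0}(u)(X,X)$. Apply this with $u(z) = \Gf_M(z, \Phi(w_0))$, or rather with a suitable modification; but it is cleaner to use directly the function $\psi$ constructed in the proof of Theorem~\ref{thm:green_fcn_bds}, which is negative plurisubharmonic on $M$, vanishes to order comparable to $\log\norm{w - w_0}$ near $z_0 = \Phi(w_0)$, and is $\Cc^2$ near $z_0$ with Levi form controlled below. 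Actually the simplest route: fix $w_0 \in r\Bb^m$ and consider $v(z) = \phi(z) - 1 + (\text{bump}) \cdot (\text{something})$ — but this is exactly the content already packaged in Theorem~\ref{thm:green_fcn_bds}. So I would instead use the Green function directly: by a standard argument (e.g. differentiating the Green function estimate, or using that $\exp \Gf_M(\cdot, z_0)$ is comparable to a smooth local function with definite Levi form), the bound $\Gf_M(\Phi(w),\Phi(w_0)) \leq \log\norm{w-w_0} + C$ for $w,w_0 \in r\Bb^m$ forces $k_M(\Phi(w_0); d(\Phi)_{w_0} X) \gtrsim e^{-C}\norm{X}$. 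Concretely, one takes the competitor plurisubharmonic function realizing (up to $\epsilon$) the Green function, restricts attention to a small coordinate ball, and applies Sibony; the Green function upper bound translates into a lower bound on the Levi form of an appropriate negative psh function vanishing at $z_0$.

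The passage from the infinitesimal estimate to the distance estimate \eqref{eqn:estimate on Kob distance} requires a little care because a geodesic (or near-geodesic) for $\dist_M^K$ joining $\Phi(w_1)$ to $\Phi(w_2)$ need not stay inside $\Phi(r\Bb^m)$, so one cannot simply integrate \eqref{eqn:estimate on Kob metric} along it. The standard fix: use the Green function lower bound more globally. For $z_1 = \Phi(w_1)$, $z_2 = \Phi(w_2)$ with $w_1,w_2 \in r\Bb^m$, the function $z \mapsto \Gf_M(z, z_1)$ is negative plurisubharmonic on $M$ and hence, via the Poincaré metric on $\Db$ and the contraction property under holomorphic disks, yields a lower bound $\dist_M^K(z_1, z_2) \geq \frac{1}{2}\log\frac{1+e^{\Gf_M(z_2,z_1)}}{1 - e^{\Gf_M(z_2,z_1)}} \geq \frac{1}{2}\big(-\Gf_M(z_2,z_1)\big)$; combined with $\Gf_M(z_2,z_1) \leq \log\norm{w_1-w_2} + C$ from Theorem~\ref{thm:green_fcn_bds}, this gives $\dist_M^K(z_1,z_2) \geq \frac{1}{2}(-\log\norm{w_1-w_2} - C)$, which is the wrong growth — it blows up as the points come together rather than being comparable to $\norm{w_1-w_2}$. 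So instead I would argue locally: when $\norm{w_1 - w_2}$ is small, any curve realizing $\dist_M^K$ within a small additive error must have length bounded, hence (by the already-established completeness/tautness-type control, or directly by the upper bound) must remain in $\Phi(r'\Bb^m)$ for some $r < r' < 1$, and there \eqref{eqn:estimate on Kob metric} applies to give $\dist_M^K(\Phi(w_1),\Phi(w_2)) \gtrsim \norm{w_1-w_2}$; for $\norm{w_1-w_2}$ bounded below (bounded away from $0$ on the compact set $\overline{r\Bb^m} \times \overline{r\Bb^m}$ minus a neighborhood of the diagonal), the lower bound follows from the Green function estimate as above plus compactness. The main obstacle is precisely this localization step: ensuring that near-minimizing curves for the Kobayashi distance between nearby points do not make long excursions out of the image of $\Phi$, which I expect to handle by a connectedness/continuity argument comparing to the (finite) upper bound on $\dist_M^K(\Phi(w_1),\Phi(w_2))$.
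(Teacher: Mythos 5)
Your outline has the right shape---upper bounds from $\Phi$ being distance-decreasing, lower bounds from Theorem~\ref{thm:green_fcn_bds}, and a separate argument for curves that leave $\Phi(\Bb^m)$---but there are two genuine errors in the middle. First, you misquote both Sibony's estimate and the direction of the Green-function bound you need. Sibony's lemma requires a function $u : M \to [0,1)$ with $u(z_0)=0$, $u$ of class $\Cc^2$ near $z_0$, and $\log u$ plurisubharmonic; not ``$u$ negative psh with $u(z_0)=0$,'' which would make $z_0$ a maximum and force $\Levi_{z_0}(u)\le 0$, giving nothing. More seriously, it is the \emph{lower} bound $\Gf_M(\Phi(w),\Phi(w_0)) \geq \log\norm{w-w_0}-C$ that produces the Kobayashi lower bound, not the upper bound as you wrote. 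The paper's argument: given a competing disk $\varphi:\Db\to M$ with $\varphi(0)=\Phi(w_0)$ and $d\varphi_0\xi = d\Phi_{w_0}X$, the contraction property gives $\log\abs{\lambda}=\Gf_\Db(\lambda,0)\geq\Gf_M(\varphi(\lambda),\Phi(w_0))$, and combining with the Green-function \emph{lower} bound yields the Schwarz-type inequality $\norm{(\Phi^{-1}\circ\varphi)(\lambda)-w_0}\leq e^{C_0}\abs{\lambda}$, hence $\norm{X}\leq e^{C_0}\abs{\xi}$. (Feeding the explicit competitor $\psi$ from the proof of Theorem~\ref{thm:green_fcn_bds}, suitably squared so that $e^{2\psi}$ is $\Cc^2$, into Sibony's estimate is a fine variant, but it likewise uses the lower bound you did not invoke.)

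Second, your fallback for $\norm{w_1-w_2}$ bounded below does not work: the inequality $\dist_M^K \geq \tanh^{-1}\exp\Gf_M$ is not a valid general fact (the standard Green-function/distance inequality involves the Carath\'eodory distance and points the other way), and the follow-up estimate $\tanh^{-1}e^{-y}\geq y/2$ fails once $y$ is large. No case split is necessary. The paper fixes $r_1\in(r,1)$, proves the infinitesimal bounds on all of $r_1\Bb^m$, and then for any piecewise $\Cc^1$ curve $\sigma$ from $\Phi(w_1)$ to $\Phi(w_2)$ argues: either $\sigma$ stays in $\Phi(r_1\Bb^m)$, in which case one integrates the infinitesimal lower bound directly; or it exits, in which case the portions of $\sigma$ near each end with $\Phi^{-1}$-image traveling from $r\Bb^m$ out to the sphere of radius $r_1$ already have Kobayashi length at least $2(r_1-r)/C_1 \geq \frac{r_1-r}{C_1 r}\norm{w_1-w_2}$ (using $\norm{w_1-w_2}<2r$). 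This single estimate handles both the near-diagonal and far-apart regimes at once, and needs neither completeness nor tautness nor any Green-function-to-Kobayashi-distance inequality.
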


Delaying the proof of Theorem~\ref{thm:comp_to_kob} we state and prove one corollary. 

\begin{corollary}\label{cor:kob is cc} If $M$ is a $\mathsf{wHHR}$ manifold, then the Kobayashi distance on $M$ is Cauchy complete. \end{corollary}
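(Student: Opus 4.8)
The plan is to deduce Cauchy completeness of the Kobayashi distance directly from the two-sided estimate in Theorem~\ref{thm:comp_to_kob}. The key point is that the \textsf{wHHR} hypothesis provides, for every $\zeta \in M$, a holomorphic embedding $\Phi_\zeta : \Bb^m \to M$ with $\Phi_\zeta(0) = \zeta$ and a plurisubharmonic $\phi_\zeta : M \to [0,1]$ with $\Levi(\phi_\zeta \circ \Phi_\zeta) \geq s^2 g_{\Euc}$ on $\Bb^m$, where $s > 0$ is uniform. Applying Theorem~\ref{thm:comp_to_kob} with, say, $r = 1/2$ yields a constant $C = C(1/2, s) > 1$, independent of $\zeta$, such that
\begin{align*}
\frac{1}{C}\norm{w_1 - w_2} \leq \dist_M^K(\Phi_\zeta(w_1), \Phi_\zeta(w_2)) \leq C\norm{w_1 - w_2}
\end{align*}
for all $w_1, w_2 \in \frac{1}{2}\Bb^m$. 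In particular, taking $w_1 = 0$, the Kobayashi ball of radius $\frac{1}{3C}$ about $\zeta$ contains $\Phi_\zeta\left(\frac{1}{3}\Bb^m\right)$, and conversely the image $\Phi_\zeta\left(\frac{1}{2C}\Bb^m\right)$ contains that Kobayashi ball (using the lower bound, a point mapped outside $\Phi_\zeta(\frac{1}{2}\Bb^m)$ would be connected to $\zeta$ by a curve exiting $\Phi_\zeta(\frac12 \Bb^m)$, and one estimates its Kobayashi length from below by splitting at the exit point).

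Now let $(z_n)$ be a Cauchy sequence for $\dist_M^K$. Fix $\varepsilon_0 = \frac{1}{4C} > 0$. There is $N$ such that $\dist_M^K(z_n, z_N) < \varepsilon_0$ for all $n \geq N$. Set $\zeta = z_N$ and consider the embedding $\Phi_\zeta$. By the previous paragraph, every $z_n$ with $n \geq N$ lies in $\Phi_\zeta\left(\frac{1}{2}\Bb^m\right)$; write $z_n = \Phi_\zeta(w_n)$ with $w_n \in \frac12 \Bb^m$. The lower bound in the displayed inequality then gives
\begin{align*}
\norm{w_n - w_k} \leq C\, \dist_M^K(z_n, z_k),
\end{align*}
so $(w_n)_{n \geq N}$ is Cauchy in $\frac{1}{2}\overline{\Bb^m}$, hence converges to some $w_\infty$. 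I must check $w_\infty \in \Bb^m$ and not merely in the closed ball: since $\dist_M^K(z_n, \zeta) < \varepsilon_0$ we get $\norm{w_n - 0} \leq C\varepsilon_0 = \frac14$, so $w_\infty \in \frac14 \overline{\Bb^m} \subset \Bb^m$. Therefore $z_n \to \Phi_\zeta(w_\infty) \in M$ in the manifold topology, and by the upper bound $\dist_M^K(z_n, \Phi_\zeta(w_\infty)) \leq C\norm{w_n - w_\infty} \to 0$, so the sequence converges in $\dist_M^K$. This proves Cauchy completeness.

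One technical item deserves care and is the only real obstacle: I should confirm that $\dist_M^K$ is a genuine distance (not merely a pseudodistance), i.e.\ that it separates points, since otherwise "Cauchy complete" needs interpretation. This follows from the lower bound in \eqref{eqn:estimate on Kob distance} applied locally around any point, which shows $\dist_M^K$ dominates a Riemannian distance near each point, hence is nondegenerate; it also shows the topology induced by $\dist_M^K$ agrees with the manifold topology, which is what makes the final convergence statement meaningful. A second minor point is to justify the inclusion of a Kobayashi ball inside $\Phi_\zeta(\frac12 \Bb^m)$: if $z$ satisfies $\dist_M^K(z,\zeta) < \frac{1}{4C}$ but $z \notin \Phi_\zeta(\frac12 \Bb^m)$, take any curve $\sigma$ from $\zeta$ to $z$ of Kobayashi length less than $\frac1{4C}$; it meets $\Phi_\zeta(\partial(\frac12 \Bb^m))$ at some first time, and the portion inside $\Phi_\zeta(\frac12\Bb^m)$ has $\Phi_\zeta$-preimage a curve from $0$ to a point of norm $\frac12$, whose Kobayashi length is at least $\frac{1}{C}\cdot\frac12 > \frac{1}{4C}$ by the infinitesimal lower bound in \eqref{eqn:estimate on Kob metric}, a contradiction. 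Everything else is bookkeeping with the uniform constant $C$.
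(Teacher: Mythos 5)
Your proof is correct in substance and follows essentially the same route as the paper: use the biLipschitz estimate of Theorem~\ref{thm:comp_to_kob} with a fixed $r$, locate the tail of a $\dist_M^K$-Cauchy sequence inside a single embedded ball $\Phi_{z_N}(\tfrac{r}{2}\Bb^m)$, pull back to get a Euclidean Cauchy sequence, and push its limit forward. The one genuine slip is in your preliminary ``nested balls'' sentence: with $C>1$, the Kobayashi ball of radius $\tfrac{1}{3C}$ about $\zeta$ contains $\Phi_\zeta(\tfrac{1}{3C^2}\Bb^m)$, not $\Phi_\zeta(\tfrac13\Bb^m)$, and your curve-exit argument gives that the Kobayashi ball of radius $\tfrac{1}{2C}$ lies inside $\Phi_\zeta(\tfrac12\Bb^m)$, not inside $\Phi_\zeta(\tfrac{1}{2C}\Bb^m)$; fortunately neither misstatement is used, since the main paragraph works with $\varepsilon_0=\tfrac{1}{4C}$, places $z_n\in\Phi_\zeta(\tfrac12\Bb^m)$ via the curve-exit estimate, and only then derives $\norm{w_n}\le C\varepsilon_0=\tfrac14$ and Cauchyness of $(w_n)$ from the lower bound in~\eqref{eqn:estimate on Kob distance}, which is the correct chain of implications.
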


\begin{proof} Fix $s \in (0,1)$ and a family of embeddings  $\{ \Phi_\zeta :\zeta \in M\}$ satisfying Definition~\ref{defn:wHHR}. Then fix $r \in (0,1)$, and a constant $C =C(r,s)> 1$ satisfying  Theorem~\ref{thm:comp_to_kob}.

Suppose $(z_n)_{n \geq 1}$ is a Cauchy sequence  in $(M,\dist_M^K)$. Then there exists $N \geq 1$ such that  
\begin{align*}
\dist_M^K(z_N, z_n) < \frac{r}{2C}
\end{align*}
for all $n \geq N$. Then by Equation~\eqref{eqn:estimate on Kob distance} and the definition of the Kobayashi distance, we must have $z_n \in \Phi_{z_N}(\frac{r}{2}\Bb^m)$ for all $n \geq N$. 

Then 
\begin{align*}
\norm{\Phi_{z_N}^{-1}(z_{n_1})-\Phi_{z_N}^{-1}(z_{n_2})} \leq C\dist_M^K(z_{n_1}, z_{n_2}) 
\end{align*}
when $n_1, n_2 \geq N$. So $\{ \Phi_{z_N}^{-1}(z_{n}) : n \geq N\}$ is a Cauchy sequence in $\Cb^m$. So 
\begin{align*}
w:=\lim_{n \rightarrow \infty} \Phi_{z_N}^{-1}(z_{n})  \in \frac{r}{2}\overline{\Bb^m}
\end{align*}
exists. Thus 
\begin{equation*}
\lim_{n \rightarrow \infty} z_{n} = \Phi_{z_N}(w). \qedhere
\end{equation*}
\end{proof}

\begin{proof}[Proof of Theorem~\ref{thm:comp_to_kob}] Fix $r \in (0,1)$. To establish the estimates in~\eqref{eqn:estimate on Kob distance} we will have to establish the estimates in~\eqref{eqn:estimate on Kob metric} for a larger ball. So fix $r_1 \in (r,1)$ and a constant $C_0=C_0(r_1,s)> 0$ satisfying  Theorem~\ref{thm:green_fcn_bds} for $r_1,s$. 

Fix $M$, $\Phi$, and $\phi$ as in the statement of the theorem. Let 
$$
C_1 := \max\left\{ e^{C_0}, \frac{1}{1-r_1} \right\}.
$$
We claim that  if $w \in r_1\Bb^m$ and $X \in T_w\Cb^m \simeq \Cb^m$, then
\begin{align}\label{eqn:bound on Kob metric in proof}
\frac{1}{C_1} \norm{X} \leq  k_M\left(\Phi(w);d(\Phi)_wX\right) \leq C_1\norm{X}.
\end{align}
For the upper bound on the Kobayashi metric, define $\phi : \Db \rightarrow M$ by 
\begin{align*}
\phi(\lambda) = \Phi\left(  w+(1-r_1)\frac{X}{\norm{X}} \lambda\right). 
\end{align*}
Then $\phi(0)=\Phi(w)$ and $d(\phi)_0 \frac{\norm{X}}{1-r_1} = d(\Phi)_wX$. So by definition
\begin{align*}
k_M\left(\Phi(w);d(\Phi)_wX\right) \leq \frac{1}{1-r_1}\norm{X} \leq C_1 \norm{X}. 
\end{align*} 
For the lower bound, fix $\epsilon > 0$. Then there exist $\xi \in T_0 \Cb \simeq \Cb$ and a holomorphic map $\varphi : \Db \rightarrow M$ with $\varphi(0)=\Phi(w)$, $d(\varphi)_0\xi = d(\Phi)_wX$, and
\begin{align*}
\abs{\xi} \leq \epsilon+k_M\left(\Phi(w);d(\Phi)_wX\right).
\end{align*}
Then fix $\delta > 0$ such that $\varphi(\delta \Db) \subset \Phi(r_1\Bb^m)$. Then for $\lambda \in \delta \Db$ we have 
\begin{align*}
\log\abs{\lambda}=\Gf_{\Db}(\lambda,0) \geq \Gf_M(\varphi(\lambda), \Phi(w)) \geq \log \norm{\Phi^{-1}(\varphi(\lambda))-w} - C_0.
\end{align*}
So $\norm{ (\Phi^{-1}\circ \varphi)(\lambda)-w} \leq e^{C_0} \abs{\lambda}$ when $\lambda \in \delta \Db$. Thus $\norm{ d(\Phi^{-1}\circ \varphi)_w 1} \leq e^{C_0}$. So 
\begin{align*}
\norm{X}
= \norm{ d(\Phi^{-1}\circ \varphi)_w\xi} \leq e^{C_0}\abs{\xi} \leq e^{C_0}\left( \epsilon+k_M\left(\Phi(w);d(\Phi)_wX\right) \right). 
\end{align*} 
Since $\epsilon > 0$ is arbitrary, then 
\begin{align*}
\frac{1}{C_1} \norm{X} \leq e^{-C_0} \norm{X} \leq k_M\left(\Phi(w);d(\Phi)_wX\right).
\end{align*}

Now we prove the ``moreover'' part of the theorem.  Fix $w_1, w_2 \in r\Bb^m$. Equation~\eqref{eqn:bound on Kob metric in proof} implies that
\begin{align*}
\dist_M^K(\Phi(w_1),\Phi(w_2)) \leq C_1 \norm{w_1-w_2}.
\end{align*}
For the lower bound, let $\sigma : [0,1] \rightarrow M$ be a piecewise $\Cc^1$-smooth curve with $\sigma(0)=\Phi(w_1)$ and $\sigma(1) = \Phi(w_2)$. If the image of $\sigma$ is contained in $\Phi(r_1\Bb^m)$, then
\begin{align*}
\int_0^1 k_M(\sigma(t); \sigma^\prime(t)) dt \geq \int_0^1 \frac{1}{C_1} \norm{ (\Phi^{-1} \circ \sigma)^\prime(t)} dt \geq \frac{1}{C_1} \norm{w_1-w_2}. 
\end{align*}
 If the image of $\sigma$ is not contained in $\Phi(r_1\Bb^m)$, then there exist sequences $(a_n)_{n \geq 1}$ and $(b_n)_{n \geq 1}$ such that: $\sigma([0,a_n] \cup [b_n,1]) \subset \Phi(r_1\Bb^m)$ and 
 \begin{align*}
 \lim_{n \rightarrow \infty} \norm{ (\Phi^{-1} \circ \sigma)(a_n)} = r_1 =  \lim_{n \rightarrow \infty} \norm{ (\Phi^{-1} \circ \sigma)(b_n)}.
 \end{align*}
 Then Equation~\eqref{eqn:bound on Kob metric in proof} implies that
 \begin{align*}
\int_0^1 k_M(\sigma(t); \sigma^\prime(t)) dt & \geq \limsup_{n \rightarrow \infty} \int_{[0,a_n] \cup [b_n,1]} \frac{1}{C_1} \norm{ (\Phi^{-1} \circ \sigma)^\prime(t)} dt \\
 & \geq  \limsup_{n \rightarrow \infty}\frac{1}{C_1}\left( \norm{(\Phi^{-1} \circ \sigma)(a_n)-w_1}+\norm{w_2-(\Phi^{-1} \circ \sigma)(b_n)} \right) \\
 & \geq \frac{2(r_1-r)}{C_1} \geq \frac{r_1-r}{C_1r} \norm{w_1-w_2}.
\end{align*}

So $C: = \max\left\{ C_1, \frac{C_1r}{r_1-r}\right\}$ satisfies the theorem.

\end{proof}


\section{The weak squeezing function}


In this section we prove Propositions~\ref{prop:wsq_basic} and \ref{prop:rigidity} from the introduction. We start by proving part two of Proposition~\ref{prop:wsq_basic}. 

\begin{proposition}\label{prop:wsq_basic in paper 1} A complex manifold $M$ is a \textsf{wHHR} manifold if and only if $\wsq_M$ has a positive lower bound.
\end{proposition}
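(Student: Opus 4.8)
I would prove the two implications separately; the forward one is immediate and the reverse one carries all the work.

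\emph{If $\wsq_M$ has a positive lower bound, then $M$ is \textsf{wHHR}.} Let $s_0 \in (0,1]$ be a lower bound for $\wsq_M$. Given $\zeta \in M$, apply the definition of $\wsq_M$ with the parameter $s_0/2 < s_0 \le \wsq_M(\zeta)$: this produces a holomorphic embedding $\Phi : \Bb^m \to M$ and a $\Cc^2$ function $\phi : M \to [0,1]$ with $\Phi(0) = \zeta$, $\log \phi$ plurisubharmonic, and $\Levi(\phi \circ \Phi) \ge (s_0/2)^2 g_{\Euc}$ on $\Bb^m$. Since $t \mapsto e^t$ is convex and increasing, $\phi = e^{\log \phi}$ is plurisubharmonic, so $(\Phi, \phi)$ witnesses Definition~\ref{defn:wHHR} at $\zeta$ with parameter $s_0/2$. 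As $\zeta$ is arbitrary, $M$ is \textsf{wHHR}.

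\emph{If $M$ is \textsf{wHHR}, then $\wsq_M$ has a positive lower bound.} Fix the parameter $s$ and, for each $\zeta$, the data $\Phi, \phi$ from Definition~\ref{defn:wHHR}. It is enough to manufacture, from $(\Phi, \phi)$, a new pair $(\hat\Phi, \hat\phi)$ satisfying the stronger requirements of Definition~\ref{defn:weak_squeezing_function} with some parameter $s_0 = s_0(s,m) > 0$; this gives $\wsq_M(\zeta) \ge s_0$ for all $\zeta$. Beyond what Definition~\ref{defn:wHHR} already provides, we must arrange $\hat\phi(\zeta) = 0$ and $\log \hat\phi$ plurisubharmonic on $M$, all while keeping $\Levi(\hat\phi \circ \hat\Phi) \ge s_0^2 g_{\Euc}$ on $\Bb^m$.

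The plan is: (i) set $\psi_0 := \phi \circ \Phi$ and replace it by a mollification $\tilde\psi_0 := \psi_0 * \rho$ against a fixed radial mollifier supported in $\tfrac14\Bb^m$, so that on $\tfrac34\Bb^m$ the function $\tilde\psi_0$ is smooth, plurisubharmonic, valued in $[0,1]$, satisfies $\Levi(\tilde\psi_0) \ge s^2 g_{\Euc}$, and has first and second derivatives bounded by a constant depending only on $m$; (ii) near $\zeta$, define $\hat\phi$ to be, in the coordinates given by $\Phi$, the function $w \mapsto \|w\|^2 e^{c(\tilde\psi_0(w) - 1)}$ for a suitable $c = c(s,m)$ — this vanishes at $\zeta$, has logarithm $2\log\|w\| + c(\tilde\psi_0(w) - 1)$, which is a sum of plurisubharmonic functions, and (using the uniform bounds on $\tilde\psi_0$) satisfies $\Levi(\hat\phi \circ \Phi) \ge \tfrac12 e^{-c} g_{\Euc}$ on a ball $r_0\Bb^m$ with $r_0 = r_0(s,m) > 0$, so that the rescaled embedding $\hat\Phi(w) := \Phi(r_0 w)$ gives the required Levi-form bound on all of $\Bb^m$; (iii) extend $\hat\phi$ to a $\Cc^2$ function on $M$ with values in $[0,1]$ and plurisubharmonic logarithm by patching the local formula to a globally defined comparison function (for instance $e^{c(\phi - 1)}$, together with the plurisubharmonic function with a logarithmic pole at $\zeta$ that was constructed in the proof of Theorem~\ref{thm:green_fcn_bds}), using that a sum or product of functions with plurisubharmonic logarithms again has plurisubharmonic logarithm — away from $\zeta$ no lower bound on the Levi form is needed, which is what makes this step possible.

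The heart of the matter, and the step I expect to be most delicate, is (iii) together with the uniformity in (ii): keeping $\log \hat\phi$ plurisubharmonic and $0 \le \hat\phi \le 1$ across the gluing region, and ensuring the lower bound on $\Levi(\hat\phi \circ \hat\Phi)$ depends only on $s$ and $m$. The mollification in step (i) is precisely what buys this uniformity, since a bounded plurisubharmonic function need not have bounded first derivatives, so the weight $e^{c(\psi_0 - 1)}$ must be replaced by the tame weight $e^{c(\tilde\psi_0 - 1)}$.
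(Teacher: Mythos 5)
Your forward direction matches the paper's (which simply calls it clear). For the reverse direction, you have correctly diagnosed the obstacle: with the raw psh function $\phi$, the cross term $2c\,\mathrm{Re}\left(X(\phi\circ\Phi)\cdot\langle w,X\rangle\right)$ in the Levi form of the weight $\|w\|^2 e^{c(\phi\circ\Phi-1)}$ is uncontrolled, because a bounded psh function need not have bounded gradient. But the mollification you introduce to fix this opens a gap that step~(iii) does not close. The mollified $\tilde\psi_0=(\phi\circ\Phi)*\rho$ lives only on $\tfrac34\Bb^m$; it is not the pullback under $\Phi$ of any function defined on $M$, and it differs from $\phi\circ\Phi$ at essentially every point. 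Hence the local weight $\|w\|^2 e^{c(\tilde\psi_0(w)-1)}$ does not agree with the global comparison $e^{c(\phi(z)-1)}$ on any overlap annulus, and ``products of functions with psh logarithm have psh logarithm'' cannot splice two \emph{unequal} functions together. To glue you would have to interpolate the exponent between $c\tilde\psi_0$ and $c(\phi\circ\Phi)$ with a cutoff, producing a term $c\chi'(w)\left(\tilde\psi_0-\phi\circ\Phi\right)$ whose Levi form has no controlled sign unless you already control the complex Hessian of $\phi\circ\Phi$ --- exactly the bound mollification was supposed to produce --- so the argument is circular. (And without mollification the estimate you get is only the perfect square $\left(1-c\|w\|\,|X(\phi\circ\Phi)|\right)^2\ge 0$, which can vanish, so you cannot simply drop step~(i).)

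The mechanism the paper actually uses sidesteps both problems simultaneously and is worth internalizing. Replace $\phi$ not by a mollification but by $\phi_1:=e^{-1+\phi}$, still psh, still global on $M$, still valued in $[0,1]$, and enjoying the \emph{self-bounded gradient} inequality $|X(\phi_1\circ\Phi)|^2\le \Levi(\phi_1\circ\Phi)(X,\bar X)$, a one-line computation using $\Levi(\phi)\ge 0$ and $\phi\le 1$. With the weight $\phi_2\circ\Phi(w)=\|w\|^{2\chi(w)}e^{\lambda\phi_1\circ\Phi(w)}$, Cauchy--Schwarz on the cross term trades against the $\lambda|X(\phi_1\circ\Phi)|^2$ term released by the self-bounded gradient inside $\Levi(e^{\lambda\phi_1\circ\Phi})$, and the whole expression collapses to $\lambda t^2+(1-\lambda t)^2$ with $t:=|X(\phi_1\circ\Phi)|\,\|w\|$, which is $\ge 1/(4\lambda)$ regardless of $t$: no pointwise gradient bound is needed. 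Because $\phi_1$ is globally defined, the only thing the cutoff touches is the factor $\|w\|^{2\chi(w)}$, which transitions to $1$ well inside the chart with no mismatch, so the gluing is automatic. This replacement of $\phi$ by $e^{-1+\phi}$ is the idea your proposal is missing.
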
 

\begin{proof}
It is clear that if $\wsq_M$ has a positive lower bound, then $M$ is a \textsf{wHHR} manifold. So suppose that  $M$ is a \textsf{wHHR} $m$-manifold and $s > 0$ satisfies Definition~\ref{defn:wHHR}. Fix $\delta \in (0,1)$ and a compactly supported smooth function $\chi : \Bb^m \rightarrow [0,1]$ with $\chi \equiv 1$ on $\delta\Bb^m$. Then pick $A  > 1$ such that 
\begin{align*}
\Levi_w\left( 2\chi(w) \log \norm{w}\right) \geq -A g_{\Euc}
\end{align*}
when $w \in \Bb^m$. Let $\lambda := \frac{A}{s^2}$. We will show that $\wsq_M$ is bounded below by $\sqrt{\frac{\delta^2}{4 \lambda e^{\lambda}}}$.

Fix $z_0 \in M$. Then fix a holomorphic embedding $\Phi : \Bb^m \rightarrow M$ and a $\Cc^2$ plurisubharmonic function $\phi : M \rightarrow [0,1]$ such that $\Phi(0) = z_0$, $\phi(z_0) = 0$, and 
\begin{align*}
\mathscr{L}( \phi \circ \Phi)\geq s^2 g_{\Euc}
\end{align*}
on $\Bb^m$.

Let $\phi_1 : = e^{-1+\phi} : M \rightarrow [0,1]$. Notice that if $X = \sum_j x_j \frac{\partial}{\partial z^j} \in T^{(1,0)} \Bb$, then 
\begin{align*}
\mathscr{L}( \phi_1 \circ \Phi)(X,\bar X) = e^{-1+\phi \circ \Phi} \mathscr{L}( \phi \circ \Phi)+ e^{-1+\phi \circ \Phi} \abs{X( \phi \circ \Phi)}^2
\end{align*}
and hence 
\begin{align*}
 \abs{X( \phi_1 \circ \Phi)}^2 =e^{-2+2\phi \circ \Phi}  \abs{X( \phi \circ \Phi)}^2 \leq \mathscr{L}( \phi_1 \circ \Phi)(X,\bar X).
\end{align*}

Define $\phi_2 : M \rightarrow [0,+\infty)$ by 
\begin{align*}
\phi_2(z) = \begin{cases} \norm{\Phi^{-1}(z)}^{2 \chi(\Phi^{-1}(z))} e^{\lambda \phi_1(z)} & \text{ if } z \in \Phi(\Bb^m) \\
e^{\lambda \phi_1(z)} & \text{ else}. 
\end{cases}
\end{align*}
Then $\phi_2(z_0) = 0$ and $\log \phi_2$ is plurisubharmonic by our choice of $A$ and $\lambda$. We claim that 
$$
 \mathscr{L}(\phi_2 \circ \Phi) \geq \frac{1}{4 \lambda }. 
$$
on $\delta \Bb^m$. To that end, fix $w \in \delta \Bb^m$ and $X = \sum_j x_j \frac{\partial}{\partial z^j} \in T^{(1,0)}_w \Cb^m$ with $\norm{X}=1$ and let 
$$
L : = \mathscr{L}(\phi_2 \circ \Phi)_w(X, \bar X).
$$
Notice that 
$$
\phi_2 \circ \Phi(w) =  \norm{w}^2 e^{\lambda \phi_1 \circ \Phi(w)} 
$$
and so
\begin{align*}
L  & =e^{\lambda \phi_1 \circ \Phi} \left( \norm{X}^2 +2\lambda  {\rm Re}\left( X( \phi_1 \circ \Phi)\cdot\ip{w,X}\right)\right) +\norm{w}^2\mathscr{L}( e^{\lambda \phi_1 \circ \Phi})(X,\bar X) \\
& \geq e^{\lambda \phi_1 \circ \Phi} \left(1 -2\lambda\abs{X( \phi_1 \circ \Phi)}\norm{w} \right)+\norm{w}^2\mathscr{L}( e^{\lambda \phi_1 \circ \Phi})(X,\bar X).
\end{align*}
Further,
\begin{align*}
\mathscr{L}( e^{\lambda \phi_1 \circ \Phi})(X,\bar X) & = e^{\lambda \phi_1 \circ \Phi}\left( \lambda  \mathscr{L}( \phi_1 \circ \Phi)(X,\bar X) +\lambda^2 \abs{X( \phi_1 \circ \Phi)}^2 \right) \\
& \geq e^{\lambda \phi_1 \circ \Phi}\left( \lambda  \abs{X( \phi_1 \circ \Phi)}^2 + \lambda^2  \abs{X( \phi_1 \circ \Phi)}^2 \right). 
\end{align*} 
So 
\begin{align*}
L  &  \geq  \lambda \abs{X( \phi_1 \circ \Phi)}^2\norm{w}^2   +  \left(1 -\lambda\abs{X( \phi_1 \circ \Phi)}\norm{w}\right)^2.
\end{align*}
Now if $\lambda\abs{X( \phi_1 \circ \Phi)}\norm{w} \leq 1/2$, we have 
$$
L \geq 0 + \frac{1}{4} \geq \frac{1}{4 \lambda}
$$
and if $\lambda\abs{X( \phi_1 \circ \Phi)}\norm{w} \geq 1/2$, we have 
$$ 
L \geq \frac{1}{4 \lambda} + 0=\frac{1}{4 \lambda}.
$$
So the claim is true. 

Finally define $\tilde \Phi : \Bb^m \rightarrow M$ by $\tilde \Phi(w) = \Phi(\delta w)$ and $\tilde \phi : M \rightarrow [0,1]$ by $\tilde \phi(z) = e^{-\lambda } \phi_2(z) $. Then 
$$
\mathscr{L}(\tilde \phi \circ\tilde \Phi) \geq \frac{\delta^2}{4 \lambda e^{\lambda}}g_{\rm Euc}
$$
on $\Bb^m$. So $\wsq_M(z_0) \geq \sqrt{\frac{\delta^2}{4 \lambda e^{\lambda}}}$. 
\end{proof}

For part one of Proposition~\ref{prop:wsq_basic} and Proposition~\ref{prop:rigidity},  we will use Sibony's Schwarz lemma for subharmonic functions on the disk. 

\begin{theorem}[{Sibony~\cite[Proposition 1]{Sib1981}}]\label{thm:sibony}  Suppose $\psi : \Db \rightarrow [0,1]$ is $\Cc^2$ in a neighborhood of the origin, $\psi(0)=0$, and $\log \psi$ is subharmonic. Then 
\begin{enumerate}
\item $\psi(z) \leq \abs{z}^2$ on $\Db$ with equality at some point different from $0$ if and only if $\psi(z) = \abs{z}^2$ for every $z \in \Db$. .
\item $\Delta \psi(0) \leq 4$ with equality if and only if $\psi(z) = \abs{z}^2$ for every $z \in \Db$. 
\end{enumerate}
\end{theorem}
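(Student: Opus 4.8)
The plan is to reduce everything to the (weak and strong) maximum principle applied to the auxiliary function
\[
v := \log\psi - 2\log\abs{\cdot} = \log\frac{\psi}{\abs{\cdot}^2}
\]
on the punctured disk $\Db\setminus\{0\}$. Since $\log\psi$ is subharmonic on $\Db$ and $2\log\abs{\cdot}$ is harmonic on $\Db\setminus\{0\}$, the function $v$ is subharmonic on $\Db\setminus\{0\}$ (with values in $[-\infty,\infty)$; note $v\equiv-\infty$ on the polar set $\{\psi=0\}$, which is harmless). The hypotheses that $\psi\ge 0$ is $\Cc^2$ near $0$ with $\psi(0)=0$ force $0$ to be a global minimum of $\psi$, so $\nabla\psi(0)=0$ and, by Taylor's theorem, $\psi(z)\le C\abs z^2$ for some $C>0$ and all small $z$; hence $v\le\log C$ near $0$. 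Thus $v$ is bounded above near its isolated singularity at $0$, and by the classical removable-singularity theorem for subharmonic functions it extends to a subharmonic function $\tilde v$ on all of $\Db$ with $\tilde v(0)=\limsup_{z\to 0}v(z)$.

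Now I would invoke the maximum principle. Since $\psi\le 1$, we have $v(z)\le -2\log\abs z$ on $\Db\setminus\{0\}$, so $\limsup_{\abs z\to 1^-}\tilde v(z)\le 0$; as $\Db$ is bounded, the maximum principle gives $\tilde v\le 0$ on $\Db$, i.e.\ $\psi(z)\le\abs z^2$ for all $z\in\Db$ (the inequality being trivial at $0$). This is the inequality in (1). For the equality case in (1): if $\psi(z_0)=\abs{z_0}^2$ for some $z_0\neq 0$, then $\tilde v(z_0)=0$, so the subharmonic function $\tilde v$ attains its maximum value $0$ at an interior point of the connected set $\Db$; the strong maximum principle forces $\tilde v\equiv 0$, i.e.\ $\psi(z)=\abs z^2$ throughout $\Db$. (The converse direction is immediate.)

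For part (2), set $w:=\abs{\cdot}^2-\psi$. By part (1), $w\ge 0$ on $\Db$ and $w(0)=0$, so $0$ is a minimum of the $\Cc^2$ function $w$; applying the second-derivative test, $0\le\Delta w(0)=\Delta(\abs{\cdot}^2)(0)-\Delta\psi(0)=4-\Delta\psi(0)$, which is the bound $\Delta\psi(0)\le 4$. If equality holds, then the Hessian of $w$ at $0$ is positive semidefinite (interior minimum) with trace $\Delta w(0)=0$, hence it is the zero matrix; Taylor's theorem then gives $w(z)=o(\abs z^2)$, so $\psi(z)/\abs z^2\to 1$ and therefore $v(z)\to 0$ as $z\to 0$. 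Thus $\tilde v(0)=0$, and once more $\tilde v$ attains its maximum $0$ at the interior point $0$, so the strong maximum principle gives $\tilde v\equiv 0$, i.e.\ $\psi(z)=\abs z^2$ on $\Db$; the converse is again immediate.

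The only non-elementary ingredients are the removable-singularity theorem and the strong maximum principle for subharmonic functions, both classical. I expect the main thing to get right is the bookkeeping: verifying that $v$ is bounded above near $0$ (so the singularity is removable and $\tilde v$ is honestly subharmonic), and checking in each equality case that $\tilde v$ genuinely attains the value $0$ at an interior point — for (1) this is the point $z_0$, for (2) it is the origin via the computation $w(z)=o(\abs z^2)$. One should also note that $\psi$ is allowed to vanish away from $0$, where $v=-\infty$; this causes no difficulty since $-\infty$ is a permitted value for subharmonic functions.
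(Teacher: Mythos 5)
The paper does not prove this statement; it is cited verbatim as Proposition~1 of Sibony~\cite{Sib1981}, so there is no in-paper argument to compare against. Your proof is correct and is essentially Sibony's own: pass to the auxiliary subharmonic function $v=\log(\psi/\abs{z}^2)$ on the punctured disk, use $\psi(0)=0$, $\nabla\psi(0)=0$ and Taylor's theorem to get an upper bound near $0$ so the singularity is removable, and then deduce (1) from the weak and strong maximum principles. In (2) the key step --- that an interior minimum of the $\Cc^2$ function $w=\abs{z}^2-\psi$ with $\Delta w(0)=0$ forces a zero Hessian (PSD with trace zero) and hence $w=o(\abs{z}^2)$, giving $\tilde v(0)=0$ --- is handled correctly, and the rest follows again from the strong maximum principle.
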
 

Using Sibony's result, we verify Proposition~\ref{prop:wsq_basic} from the introduction.

\begin{proposition}\label{prop:wsq_basic in paper 2} If $M$ is a complex manifold, then 
$$
\sq_M(z_0) \leq \wsq_M(z_0) \leq 1.
$$
\end{proposition}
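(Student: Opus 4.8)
The plan is to prove the two inequalities separately. The upper bound $\wsq_M(z_0) \le 1$ is the substantive part and will follow from Sibony's Schwarz lemma (Theorem~\ref{thm:sibony}). Suppose $s \in (0,1]$ is admissible in Definition~\ref{defn:weak_squeezing_function}, witnessed by a holomorphic embedding $\Phi : \Bb^m \to M$ and a $\Cc^2$ function $\phi : M \to [0,1]$ with $\Phi(0)=z_0$, $\phi(z_0)=0$, $\log \phi$ plurisubharmonic, and $\Levi(\phi\circ\Phi) \ge s^2 g_{\Euc}$ on $\Bb^m$. The key idea is to restrict to a complex line through the origin: fix a unit vector $v \in \Cb^m$ and set $\psi(\zeta) := (\phi\circ\Phi)(\zeta v)$ for $\zeta \in \Db$. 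Then $\psi : \Db \to [0,1]$ is $\Cc^2$, $\psi(0)=0$, and $\log\psi$ is subharmonic (as the restriction of a plurisubharmonic function to a complex line). Hence Theorem~\ref{thm:sibony}(2) applies and gives $\Delta\psi(0) \le 4$. On the other hand, $\Delta\psi(0) = 4\,\frac{\partial^2}{\partial\zeta\partial\bar\zeta}\big|_0 (\phi\circ\Phi)(\zeta v) = 4\,\Levi(\phi\circ\Phi)_0(v,\bar v) \ge 4 s^2 \norm{v}^2 = 4s^2$. Therefore $4s^2 \le 4$, i.e. $s \le 1$. Taking the supremum over admissible $s$ yields $\wsq_M(z_0) \le 1$.

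For the lower bound $\sq_M(z_0) \le \wsq_M(z_0)$, the plan is essentially the computation already carried out in the proof of Observation~\ref{obs:HHR_implies_wHHR}, adapted to the pointwise setting and checking the extra hypotheses in Definition~\ref{defn:weak_squeezing_function}. If $\sq_M(z_0) = -\infty$ (no bounded embedding) the inequality is vacuous since $\wsq_M(z_0) \ge 0$. Otherwise fix $r < \sq_M(z_0)$ and a holomorphic embedding $f : M \to \Bb^m$ with $f(z_0)=0$ and $r\Bb^m \subset f(M)$. Define $\Phi : \Bb^m \to M$ by $\Phi(w) = f^{-1}(rw)$ and $\phi : M \to [0,1]$ by $\phi(z) = \norm{f(z)}^2$. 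Then $\Phi(0)=z_0$, $\phi(z_0)=0$, and $\phi\circ\Phi(w) = r^2\norm{w}^2$, so $\Levi(\phi\circ\Phi) = r^2 g_{\Euc}$ on $\Bb^m$. It remains only to check that $\log\phi$ is plurisubharmonic: but $\log\phi(z) = \log\norm{f(z)}^2 = 2\log\norm{f(z)}$, and $w \mapsto \log\norm{w}$ is plurisubharmonic on $\Cb^m$, so $\log\phi$ is plurisubharmonic as the composition of a plurisubharmonic function with the holomorphic map $f$. Hence $r$ is admissible in Definition~\ref{defn:weak_squeezing_function}, giving $\wsq_M(z_0) \ge r$; letting $r \nearrow \sq_M(z_0)$ completes the proof.

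The only point requiring any care is the identity $\Delta\psi(0) = 4\,\Levi(\phi\circ\Phi)_0(v,\bar v)$ relating the real Laplacian of the restriction to the Levi form in the direction $v$; this is a routine unwinding of definitions (writing $\zeta = x+iy$ and using $\frac{\partial^2}{\partial\zeta\partial\bar\zeta} = \frac14(\partial_x^2+\partial_y^2)$ together with the chain rule along $\zeta \mapsto \zeta v$), and the positivity of the Levi form in direction $v$ follows from $\Levi(\phi\circ\Phi) \ge s^2 g_{\Euc}$ evaluated on the vector $v$. I do not expect a genuine obstacle here; the main content is simply invoking Sibony's lemma along complex lines, which is exactly why Definition~\ref{defn:weak_squeezing_function} builds in the hypothesis that $\log\phi$ be plurisubharmonic.
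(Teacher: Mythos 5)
Your proof is correct and follows essentially the same route as the paper: for the upper bound, restrict $\phi\circ\Phi$ to complex lines through the origin and invoke Sibony's lemma (Theorem~\ref{thm:sibony}(2)) to force $4s^2 \le \Delta\psi(0)\le 4$; for the lower bound, reuse the $\Phi$ and $\phi$ from Observation~\ref{obs:HHR_implies_wHHR} and check that $\log\phi = 2\log\norm{f}$ is plurisubharmonic. The only cosmetic difference is your slightly more careful handling of the supremum (taking $r<\sq_M(z_0)$ and treating the $-\infty$ case explicitly), which the paper leaves implicit.
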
 

\begin{proof} We first show that $\sq_M(z_0) \leq \wsq_M(z_0)$. Suppose $f : M \rightarrow \Cb^m$ is a holomorphic embedding with $f(z_0) = 0$ and $s\Bb^m \subset f(M) \subset \Bb^m$. Then define $\Phi : \Bb^m \rightarrow M$ by $\Phi(w) = f^{-1}(sw)$ and define $\phi: M \rightarrow [0,1]$ by $\phi(z) = \norm{f(z)}^2$. Then $\Phi(0)=z_0$, $\phi(z_0)=0$, $\phi$ is a $\Cc^2$ function, and $\log \phi$ is plurisubharmonic. Further, $\phi \circ \Phi(w) = s^2\norm{w}^2$ and so
\begin{align*}
\Levi(\phi \circ \Phi) = s^2 g_{\Euc} 
\end{align*}
on $\Bb^m$. Hence $\wsq_M(z_0) \geq s$, which implies that $\sq_M(z_0) \leq \wsq_M(z_0)$. 

Next we show that $\wsq_M(z_0) \leq 1$. Suppose $\Phi : \Bb^m \rightarrow M$ is a holomorphic embedding with $\Phi(0)=z_0$ and $\phi : M \rightarrow [0,1]$ is a  $\Cc^2$ plurisubharmonic function where $\phi(\zeta) = 0$, $\log \phi$ is plurisubharmonic,   and 
\begin{align*}
\mathscr{L}( \phi \circ \Phi)\geq s^2 g_{\Euc} 
\end{align*}
on $\Bb$.

Fix a unit vector $X \in \Cb^m$ and consider the function $ \psi : \Db \rightarrow [0,1]$ defined by $\psi(z) = (\phi \circ \Phi)(zX)$. Then  $\psi$ is $\Cc^2$, $\psi(0)=0$, $\log \psi$ is subharmonic, and 
\begin{align*}
\Delta \psi(0) = 4\mathscr{L}( \phi \circ \Phi)_0(X,\bar X) \geq 4s^2.
\end{align*}
So by Theorem~\ref{thm:sibony} we must have $s \leq 1$. 
\end{proof}

To prove Proposition~\ref{prop:rigidity}, we will use the following lemma (which is similar in statement and proof to~\cite[Proposition 3.6.2]{MV2015}). 

\begin{lemma}\label{lem:averages} If $\psi : \Db \rightarrow [0,1]$ is $\Cc^2$, $\Delta \psi \geq 4s^2$ on $\Db$, and $\psi(0) = 0$, then 
\begin{align*}
s^2 r^2 \leq \frac{1}{2\pi} \int_0^{2 \pi} \psi(r e^{i\theta}) d\theta 
\end{align*}
for all $r \in [0,1)$. 
\end{lemma}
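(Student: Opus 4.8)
The plan is to prove Lemma~\ref{lem:averages} using the sub-mean-value property coming from the differential inequality $\Delta \psi \ge 4s^2$, together with the normalization $\psi(0)=0$.

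\medskip

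\noindent\textbf{Approach.} First I would reduce to the sharp comparison with the model function. Set $u(z) = \psi(z) - s^2\abs{z}^2$. Since $\Delta(s^2\abs z^2) = 4s^2$, the hypothesis $\Delta\psi \ge 4s^2$ says exactly that $u$ is subharmonic on $\Db$. For a subharmonic function the circular averages are monotone in the radius; more precisely, the function
\begin{align*}
r \longmapsto \frac{1}{2\pi}\int_0^{2\pi} u(re^{i\theta})\, d\theta
\end{align*}
is non-decreasing on $[0,1)$ and its limit as $r\to 0^+$ equals $u(0) = \psi(0) - 0 = 0$. Hence for every $r\in[0,1)$,
\begin{align*}
0 \le \frac{1}{2\pi}\int_0^{2\pi} u(re^{i\theta})\,d\theta = \frac{1}{2\pi}\int_0^{2\pi} \psi(re^{i\theta})\,d\theta - s^2 r^2,
\end{align*}
which rearranges to the claimed inequality.

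\medskip

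\noindent\textbf{Key steps in order.} (1) Introduce $u = \psi - s^2\abs{z}^2$ and check $\Delta u \ge 0$, i.e. $u$ is subharmonic (this is immediate from $\Delta\abs z^2 = 4$). (2) Recall/cite the standard fact that for a $\Cc^2$ subharmonic function the mean $m(r) := \frac{1}{2\pi}\int_0^{2\pi} u(re^{i\theta})\,d\theta$ is non-decreasing; this follows from differentiating under the integral sign and applying the divergence theorem (Green's identity) on the annulus to get $r\, m'(r) = \frac{1}{2\pi}\int_{r\Db}\Delta u \ge 0$. (3) Take the limit $r\to 0$: continuity of $u$ at $0$ (which holds since $\psi$ is $\Cc^2$) gives $m(0^+) = u(0) = 0$. (4) Conclude $m(r)\ge 0$, unwind the definition of $u$, and read off $s^2 r^2 \le \frac{1}{2\pi}\int_0^{2\pi}\psi(re^{i\theta})\,d\theta$.

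\medskip

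\noindent\textbf{Main obstacle.} There is no real obstacle here — this is a soft argument. The only point requiring a little care is the justification that $m(r)$ is monotone: one must make sure the differentiation under the integral sign and the application of Green's identity are valid on $\overline{r\Db}$ for $r < 1$, which is fine since $\psi$ (hence $u$) is $\Cc^2$ on all of $\Db$. Alternatively one can avoid differentiating and argue directly that $u(0) \le m(r)$ for each fixed $r$ by the sub-mean-value inequality applied to $u$ on the disk $r\Db$ (the sub-mean-value property is equivalent to subharmonicity for $\Cc^2$ functions), which is perhaps the cleanest route and is what I would actually write.
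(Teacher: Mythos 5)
Your proof is correct and rests on the same underlying mechanism as the paper's: integrating the Laplacian inequality over circles centered at the origin. The packaging differs slightly. The paper works directly with $f(r) = \frac{1}{2\pi}\int_0^{2\pi}\psi(re^{i\theta})\,d\theta$, derives the ODE inequality $f'' + \frac{1}{r}f' \ge 4s^2$ from the polar form of the Laplacian, and integrates twice; you instead subtract off the comparison function $s^2\abs{z}^2$ so the hypothesis becomes subharmonicity of $u = \psi - s^2\abs{z}^2$, and then invoke the standard sub-mean-value inequality at the origin. The two are computationally equivalent (the monotonicity of circular averages of a subharmonic function is proved by exactly the Green's-identity/polar-Laplacian computation the paper carries out), but your formulation is a bit more conceptual and offloads the work onto a textbook fact, whereas the paper's is self-contained. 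Either is fine; no gap.
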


\begin{proof} Define $f : [0,1) \rightarrow [0,1]$ by 
\begin{align*}
f(r) = \frac{1}{2\pi} \int_0^{2 \pi} \psi(r e^{i\theta}) d\theta.
\end{align*}
Notice that $f$ is $\Cc^2$ and  $f(0) =0$. Further
\begin{align*}
f^{\prime\prime}(r) + \frac{1}{r} f^\prime(r) &= \frac{1}{2\pi} \int_0^{2 \pi} \left( \frac{\partial^2}{\partial r^2}+ \frac{1}{r} \frac{\partial}{\partial r}\right)\psi(r e^{i\theta}) d\theta \\
&=  \frac{1}{2\pi} \int_0^{2 \pi} (\Delta \psi)(r e^{i\theta}) d\theta \geq 4s^2
\end{align*}
since $\Delta = \frac{\partial^2}{\partial r^2}+ \frac{1}{r} \frac{\partial}{\partial r} + \frac{1}{r^2} \frac{\partial^2}{\partial \theta^2}$ and $\int_0^{2\pi}  \frac{\partial^2 }{\partial \theta^2}\psi(r e^{i\theta}) d\theta=0$. So 
\begin{align*}
\frac{d}{dr} \left( r f^\prime(r) \right) \geq 4s^2 r.
\end{align*}
Then integrating we have $r f^\prime(r) \geq 2s^2 r^2$ and so 
\begin{equation*}
f(r) = \int_0^r f^\prime(t) dt \geq s^2 r^2. \qedhere
\end{equation*}

\end{proof}

\begin{proposition}\label{prop:rigidity in paper} If $M$ is taut and $\wsq_M(z_0) = 1$ for some $z_0 \in M$, then $M$ is biholomorphic to the ball. \end{proposition}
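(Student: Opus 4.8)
The plan is to show that $\wsq_M(z_0)=1$ forces the relevant function $\phi\circ\Phi$ to agree with $\norm{w}^2$ on all of $\Bb^m$, and then to use this together with tautness to build a biholomorphism onto the ball. By definition of $\wsq_M$, there is a sequence $s_n \to 1$, holomorphic embeddings $\Phi_n : \Bb^m \to M$ with $\Phi_n(0)=z_0$, and $\Cc^2$ functions $\phi_n : M \to [0,1]$ with $\phi_n(z_0)=0$, $\log\phi_n$ plurisubharmonic, and $\Levi(\phi_n \circ \Phi_n) \geq s_n^2 g_{\Euc}$ on $\Bb^m$. First I would use tautness to extract a subsequential limit: by the estimates in Theorem~\ref{thm:comp_to_kob}, the maps $\Phi_n$ are uniformly biLipschitz onto their images near $0$ for the Kobayashi distance, so (after passing to a subsequence) $\Phi_n$ converges locally uniformly to a holomorphic map $\Phi_\infty : \Bb^m \to M$ which is still an embedding at $0$, in fact a local biholomorphism onto a neighborhood of $z_0$; here tautness prevents the limit from degenerating. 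Correspondingly $\phi_n$ should be controlled — since $0 \le \phi_n \le 1$ they subconverge (perhaps after further care, using the plurisubharmonicity of $\log\phi_n$ and a normal families argument) to some $\phi_\infty : M \to [0,1]$ with $\phi_\infty(z_0)=0$, $\log\phi_\infty$ plurisubharmonic, and $\Levi(\phi_\infty \circ \Phi_\infty) \geq g_{\Euc}$ on $\Bb^m$.

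Next, for each unit vector $X \in \Cb^m$ consider $\psi_X(\zeta) := (\phi_\infty \circ \Phi_\infty)(\zeta X)$ on $\Db$. This is $\Cc^2$ near $0$, takes values in $[0,1]$, satisfies $\psi_X(0)=0$, has $\log\psi_X$ subharmonic, and $\Delta\psi_X(0) = 4\Levi(\phi_\infty\circ\Phi_\infty)_0(X,\bar X) \geq 4$. By part~(2) of Sibony's theorem (Theorem~\ref{thm:sibony}), this forces $\psi_X(\zeta) = \abs{\zeta}^2$ for all $\zeta \in \Db$. Letting $X$ range over all unit vectors gives $(\phi_\infty \circ \Phi_\infty)(w) = \norm{w}^2$ for all $w \in \Bb^m$. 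In particular $\phi_\infty \circ \Phi_\infty$ is a smooth strictly plurisubharmonic exhaustion of $\Bb^m$ realized through $\Phi_\infty$.

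Finally I would promote this to a global statement about $M$. The set $U := \Phi_\infty(\Bb^m)$ is an open subset of $M$ on which $\phi_\infty = \norm{\Phi_\infty^{-1}(\cdot)}^2 < 1$; I claim $U = \{z : \phi_\infty(z) < 1\}$ and that $\Phi_\infty$ extends to a biholomorphism of $\Bb^m$ onto this sublevel set — more optimistically onto all of $M$. To see $\Phi_\infty(\Bb^m)$ is all of $M$: if $M \setminus U$ were nonempty, take $w_k \in \Bb^m$ with $\norm{w_k}\to 1$ and $\Phi_\infty(w_k) \to p \in \partial U \subset M$ (using tautness/properness of $\Phi_\infty$, which follows because $\dist_M^K(z_0, \Phi_\infty(w)) \gtrsim -\log(1-\norm{w})$ from the Kobayashi estimates, forcing $\Phi_\infty$ to be proper); then $\phi_\infty$ is plurisubharmonic on $M$, equals $1$ along $\partial U$, is $\le 1$ everywhere, and $< 1$ on $U$, so by the maximum principle applied to the plurisubharmonic function $\phi_\infty$ near an interior point where it attains its maximum $1$, it would be constant $\equiv 1$ on a component of $M$ meeting $\partial U$ — contradicting $\phi_\infty(z_0)=0$ once one knows $M$ is connected and $\partial U$ is reached from inside. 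Hence $M = \Phi_\infty(\Bb^m)$ and $\Phi_\infty : \Bb^m \to M$ is the desired biholomorphism.

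The main obstacle I anticipate is the compactness/limiting step for $\phi_n$: unlike the embeddings $\Phi_n$, the functions $\phi_n$ are only assumed $\Cc^2$ with a one-sided bound on the Levi form, so extracting a limit that retains the pointwise Levi-form lower bound (rather than just $L^1_{loc}$ or distributional convergence of $\log\phi_n$) requires care. One workaround is to avoid passing to a limit entirely: work directly with $\phi_n \circ \Phi_n$ for fixed large $n$, apply Lemma~\ref{lem:averages} on each complex line to get $s_n^2 r^2 \le \frac{1}{2\pi}\int_0^{2\pi}(\phi_n\circ\Phi_n)(re^{i\theta}X)\,d\theta \le 1$, push $n\to\infty$ to force the spherical averages of any limiting object to equal $r^2$, and combine with Sibony's sharp case to pin down the limit. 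I would also need to double-check that "taut" gives exactly the non-degeneracy of the limit embedding and the properness needed in the last paragraph; this is where the tautness hypothesis is genuinely used, consistent with the remark in the introduction that it is unclear whether it can be removed.
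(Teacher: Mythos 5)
Your overall plan matches the paper's proof: extract limits $\Phi_\infty$ and $\phi_\infty$ via tautness and normal families, use the sharp case of Sibony's lemma together with the spherical-averages lower bound to pin down $\phi_\infty\circ\Phi_\infty$, and then run a maximum-principle argument to deduce surjectivity of $\Phi_\infty$. However, your primary route has a genuine gap that you yourself flag: you propose that $\phi_n$ subconverges to a $\Cc^2$ function $\phi_\infty$ with a \emph{pointwise} Levi form lower bound $\Levi(\phi_\infty\circ\Phi_\infty)\geq g_{\Euc}$, and then apply Theorem~\ref{thm:sibony}(2) (which needs $\Cc^2$ regularity and a Laplacian estimate at the origin) directly to the limit. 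Neither the $\Cc^2$ regularity nor the Levi form lower bound survives the limiting process: since $\phi_n$ are only uniformly bounded plurisubharmonic functions, the natural limit is only in $L^1_{\mathrm{loc}}$ and is merely upper semicontinuous, so Sibony's part (2) cannot be applied to it.

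The workaround you describe in your last paragraph is precisely what the paper does and closes the gap: apply Theorem~\ref{thm:sibony}(1) to each $\Cc^2$ function $\phi_n\circ\Phi_n$ to get $\phi_n\circ\Phi_n(z)\le\norm{z}^2$, apply Lemma~\ref{lem:averages} to the same $\Cc^2$ functions to get $s_n^2r^2$ as a lower bound on the spherical averages, and only then pass to the limit. This yields $(\phi_\infty\circ\Phi_\infty)(z)=\norm{z}^2$ almost everywhere, and upper semicontinuity upgrades this to $(\phi_\infty\circ\Phi_\infty)(z)\ge\norm{z}^2$ everywhere — the inequality, not equality, is all the surjectivity/maximum-principle step requires. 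Two smaller remarks: you should argue that $\Phi_\infty$ is injective on all of $\Bb^m$ (not merely an immersion near $0$), which follows as in the paper from the uniform biLipschitz bounds of Theorem~\ref{thm:comp_to_kob}; and the surjectivity step can be streamlined — no properness claim is needed, only that a boundary point $p\in\overline{\Phi_\infty(\Bb^m)}\setminus\Phi_\infty(\Bb^m)$ would satisfy $\phi_\infty(p)\ge 1$ by upper semicontinuity, contradicting the maximum principle.
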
 

\begin{proof}Suppose $\wsq_M(z_0) = 1$. Then there exist $s_n \nearrow 1$, a sequence of holomorphic embedding $\Phi_n : \Bb^m \rightarrow M$, and a sequence of $\Cc^2$  functions $\phi_n : M \rightarrow [0,1]$ where $\Phi_n(0)=z_0$, $\phi_n(z_0) = 0$, $\log \phi_n$ is plurisubharmonic, and
\begin{align*}
\Levi( \phi_n \circ \Phi_n)\geq s_n^2 g_{\Euc}
\end{align*}
on $\Bb^m$. Notice that $\phi_n=e^{\log \phi_n}$ is also plurisubharmonic.

Since $M$ is taut, by passing to a subsequence we can suppose that $\Phi_n$ converges to a holomorphic map $\Phi : \Bb^m \rightarrow M$ with $\Phi(0)=\zeta$. By Theorem~\ref{thm:comp_to_kob}, for every $r \in (0,1)$ there exists $C_r > 1$ such that 
\begin{align*}
\frac{1}{C_r} \norm{w-u} \leq \dist_\Omega^K(\Phi_n(w), \Phi_n(u)) \leq C_r \norm{w-u}
\end{align*}
for all $n \geq 1$ and $p,q \in r \Bb$. Thus $\Phi$ is injective. 

Since $\phi_n$ is a bounded sequence of plurisubharmonic functions, by passing to a further subsequence we can suppose that $\phi_n$ converges in $L^{1,{\rm loc}}(M)$ to a plurisubharmonic function $\phi : M \rightarrow [0,1]$, see ~\cite[page 229]{Hormander}. By Theorem~\ref{thm:sibony} 
\begin{align*}
\phi_n \circ \Phi_n(z) \leq \norm{z}^2
\end{align*}
and by Lemma~\ref{lem:averages}
\begin{align*}
s_n^2 r^2 \leq \frac{1}{\mu(\mathbb{S}^{m-1})}\int_{\mathbb{S}^{m-1}} (\phi_n \circ \Phi_n)(rz) d\mu(z) 
\end{align*}
where $\mathbb{S}^{m-1} \subset \Cb^m$ is the unit sphere and $\mu$ is the surface area measure on $\mathbb{S}^{m-1}$. So we must have $(\phi \circ \Phi)(z) = \norm{z}^2$ almost everywhere. Since $\phi$ is upper semicontinuous, this implies that $(\phi \circ \Phi)(z) \geq \norm{z}^2$ for all $z \in \Bb^m$. 

Next we claim that $\Phi$ is onto. Since $\Phi$ is injective, $\Phi$ is an immersion and hence $\Phi(\Bb^m)$ is open. So it is enough to show that $\Phi(\Bb^m)$ is closed. Suppose not. Then there exists $z \in \overline{\Phi(\Bb^m)} \setminus \Phi(\Bb^m)$. Then there exists a sequence $(w_j)_{j \geq 1}$ in $\Bb^m$ with  $z = \lim_{j \rightarrow \infty} \Phi(w_j)$ and $\lim_{j \rightarrow \infty} \norm{w_j} =1$. Then 
\begin{align*}
\phi(z) \geq \lim_{j \rightarrow \infty} (\phi \circ \Phi)(w_j) \geq \lim_{j \rightarrow \infty} \norm{w_j}^2 \geq 1.
\end{align*}
Then by the maximum principle, $\phi \equiv 1$ which is impossible. So $\Phi$ is onto. 

So $\Phi : \Bb^m \rightarrow M$, being one-to-one and onto, is a biholomorphism. 
\end{proof}


\section{The Bergman kernel and metric on manifolds} 


In this expository section we recall the definition and basic properties of the Bergman  kernel, metric, and distance on a complex manifold. 

Given a complex $m$-manifold $M$, let $\Hc(M)$ denote the Hilbert space of holomorphic $(m,0)$-forms with inner product 
\begin{align*}
\ip{f,g} = \frac{i^{m^2}}{2^m} \int_M f \wedge \bar{g}. 
\end{align*}
The \emph{Bergman kernel of $M$} is the $(m,m)$-form on $M \times M$ defined by 
\begin{align*}
\Bf_M(z,w) = \sum_{j} \phi_j(z) \wedge \overline{\phi_j(w)}
\end{align*}
where $\{ \phi_j \}$ is some (any) orthonormal basis of $\Hc(M)$. We will also consider the $(m,m)$-form on $M$ defined by
\begin{align}\label{eqn:diag BK}
\Bfd_M(z) = \sum_{j} \phi_j(z) \wedge \overline{\phi_j(z)}
\end{align}
where again $\{ \phi_j \}$ is some (any) orthonormal basis of $\Hc(M)$.

When $\Bfd_M(z)$ is non-vanishing, the \emph{Bergman (pseudo-)metric} is defined in local holomorphic coordinates $z^1,\dots,z^m$ by 
\begin{align*}
g_{M} = \sum_{1 \leq j, k \leq m} \frac{\partial^2 \log\hat\Bfd(z)}{\partial z^j \partial \bar{z}^k} dz^j \otimes d\bar{z}^k
\end{align*}
where 
\begin{align*}
\Bfd_M  = \hat\Bfd \, dz^1 \wedge \dots \wedge dz^m \wedge d\bar{z}^1 \wedge \dots \wedge d\bar{z}^m.
\end{align*}

\subsection{Classical formulas for the Bergman metric and kernel}

In this section we recall and prove some classical formulas for the Bergman metric and kernel on a complex manifold. Fix a holomorphic embedding $\Phi : \Bb^m \rightarrow M$. Then define $\hat \Bfd : \Bb^m \rightarrow \Cb$ by 
\begin{align*}
\Phi^*\Bfd_M = \hat{\Bfd} \, dz^1 \wedge \dots \wedge dz^m \wedge d\bar{z}^1 \wedge \dots \wedge d\bar{z}^m.
\end{align*}
Likewise, for $f \in \Hc(M)$ define $\hat f : \Bb^m \rightarrow \Cb$ by 
$$
 \Phi^* f = \hat f dz^1 \wedge \cdots \wedge dz^m. 
 $$
 Notice that 
 \begin{align}
 \int_{\Bb^m} \abs{\hat f}^2 d\Leb & = \frac{i^{m^2}}{2^m}  \int_{\Bb^m}  \Phi^* f  \wedge \overline{ \Phi^* f } =  \frac{i^{m^2}}{2^m} \int_{\Phi(\Bb)} f \wedge \bar f   \leq  \norm{f}^2. \label{eqn:bound on L2 norm}
\end{align}

The following formulas as very well known, but since the argument is short we include it here. 

\begin{theorem}\label{thm:formulas for Bergman stuff} \
\begin{enumerate} 
\item If $z \in \Bb^m$, then 
$$
\hat \Bfd(z) = \max\left\{ \abs{\hat f(z)}^2 : f \in \Hc(M), \, \norm{f}= 1\right\}.
$$
\item Suppose $\Bfd_M$ is non-vanishing (and hence the Bergman metric exists). If $z \in \Bb^m$ and $X= \sum_{j=1}^m x_j \frac{\partial}{\partial z^j} \in T_z^{1,0} \Cb^m$, then 
$$
\Phi^* g_M(X,\bar X) = \frac{1}{\hat \Bfd(z)}  \max\left\{ \abs{X(\hat f)(z)}^2 : f \in \Hc(M), \, \norm{f} = 1, \, \hat f(z)=0\right\}.
$$
\end{enumerate}
\end{theorem}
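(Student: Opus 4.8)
The plan is to prove both formulas by the standard "extremal property of reproducing kernels" argument, transported from $M$ to $\Bb^m$ via the embedding $\Phi$.

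For part (1), I would first observe that the pointwise evaluation $f \mapsto \hat f(z)$ is a bounded linear functional on $\Hc(M)$: by the mean value property applied to $\hat f$ on a small Euclidean ball $B(z,\rho) \subset \Bb^m$ together with the bound $\int_{\Bb^m} \abs{\hat f}^2 \, d\Leb \leq \norm{f}^2$ from \eqref{eqn:bound on L2 norm}, we get $\abs{\hat f(z)} \leq C_\rho \norm{f}$. Hence there is a unique $k_z \in \Hc(M)$ with $\hat f(z) = \ip{f, k_z}$ for all $f$, and by Cauchy--Schwarz the maximum of $\abs{\hat f(z)}^2$ over the unit sphere equals $\norm{k_z}^2$, attained at $f = k_z / \norm{k_z}$. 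Then I would identify $\norm{k_z}^2$ with $\hat\Bfd(z)$ by expanding $k_z$ in an orthonormal basis $\{\phi_j\}$: $\widehat{\phi_j}(z) = \ip{\phi_j, k_z} = \overline{\ip{k_z,\phi_j}}$, so $\norm{k_z}^2 = \sum_j \abs{\ip{k_z,\phi_j}}^2 = \sum_j \abs{\widehat{\phi_j}(z)}^2$, which is exactly the coefficient $\hat\Bfd(z)$ of $\Phi^*\Bfd_M = \sum_j \Phi^*\phi_j \wedge \overline{\Phi^*\phi_j}$ against $dz^1 \wedge \cdots \wedge d\bar z^m$ (up to tracking the $i^{m^2}/2^m$ normalization consistently with the inner product, which is where a sign/constant check is needed). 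One subtlety: the ``$\max$'' is genuinely attained and finite, which requires $\hat\Bfd(z) \neq 0$ for it to be positive, but the formula holds trivially (both sides zero) if every element of $\Hc(M)$ vanishes at $z$.

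For part (2), assuming $\Bfd_M$ is non-vanishing, I would use the classical formula for the Bergman metric as a quotient of extremal problems. Work at a fixed $z$ and differentiate the logarithm: writing $L(z) = \hat\Bfd(z) = \norm{k_z}^2$, one computes $\Phi^* g_M(X,\bar X) = X\bar X \log L = \frac{X\bar X L}{L} - \frac{\abs{X L}^2}{L^2}$, where $X$ acts on the $z$-variable. The key identities are $\partial_{z^j} k_z$ exists in $\Hc(M)$ (again by a Cauchy-estimate argument on derivatives of holomorphic functions, legitimizing differentiation under the sum), $X L = \ip{X k_{(\cdot)}, k_z}$-type expressions, and $X\bar X L = \norm{X k_z}^2$ plus cross terms. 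After organizing, $\Phi^* g_M(X,\bar X)$ becomes $\frac{1}{L(z)}\big( \norm{X k_z}^2 - \frac{\abs{\ip{X k_z, k_z}}^2}{\norm{k_z}^2}\big)$, i.e. $\frac{1}{L(z)}$ times the squared norm of the component of $X k_z$ orthogonal to $k_z$. I would then recognize that squared orthogonal-component norm as the solution of the constrained extremal problem $\max\{ \abs{\ip{g, X k_z}}^2 : \norm{g}=1,\ \ip{g, k_z}=0\}$ and translate $\ip{g, k_z} = \hat g(z)$ and $\ip{g, X k_z} = X(\hat g)(z)$ (the latter by differentiating the reproducing identity $\hat g(w) = \ip{g, k_w}$ in $w$ and evaluating at $z$ in the direction $X$), giving exactly the claimed formula.

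The main obstacle I anticipate is purely bookkeeping rather than conceptual: carefully justifying that evaluation and directional-derivative-of-evaluation are bounded functionals on $\Hc(M)$ (so that $k_z$ and $X k_z$ live in $\Hc(M)$ and differentiation under the series is valid), and — more annoyingly — tracking the normalization constants $i^{m^2}/2^m$ and the orientation/ordering of $dz^1 \wedge \cdots \wedge d\bar z^m$ so that $\norm{k_z}^2$ matches $\hat\Bfd(z)$ on the nose with no stray factor. I would handle the first by citing the standard Cauchy/mean-value estimates on Euclidean balls inside $\Bb^m$ (using \eqref{eqn:bound on L2 norm}), and the second by choosing conventions once at the start and checking them on the basis $\{\phi_j\}$. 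Everything else is the textbook reproducing-kernel computation.
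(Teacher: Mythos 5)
Your proposal is correct and takes essentially the same approach as the paper: both arguments identify the reproducing elements for $f\mapsto\hat f(z)$ and for the directional derivative (restricted to $\{\hat f(z)=0\}$), verify continuity via Cauchy estimates together with (6.2), and reduce the two formulas to the corresponding Cauchy--Schwarz extremal problems. The only organizational difference is that the paper builds an orthonormal basis $\phi_1,\phi_2,\dots$ aligned with those two reproducing elements and computes $X\bar X\log\hat\Bfd$ termwise in that basis, while you compute directly with $k_z$ and the orthogonal projection of $\bar X k_z$ onto $k_z^\perp$; these are the same linear algebra expressed two ways.
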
 

\begin{proof}[Proof sketch] Fix $z \in \Bb^m$ and $X= \sum_{j=1}^m x_j \frac{\partial}{\partial z^j} \in T_z^{1,0} \Cb^m$. 

Define $\ell_1 : \Hc(M) \rightarrow \Cb$ by 
$$
\ell_1(f) = \hat f(z).
$$
By Cauchy's integral formula and Equation~\eqref{eqn:bound on L2 norm}, $\ell_1$ is continuous. Hence there exists $\psi_1 \in \Hc(M)$ such that 
$$
\hat{f}(z) = \ell_1(f) = \ip{f,\psi_1}
$$ 
for all $f \in \Hc(M)$. Next define $\ell_2 : \psi_1^{\bot} \rightarrow \Cb$ by 
$$
\ell_2(f) = X(\hat f)(z).
$$
Again by Cauchy's integral formula and Equation~\eqref{eqn:bound on L2 norm}, $\ell_2$ is continuous. Hence there exists $\psi_2 \in \psi_1^{\bot}$ such that 
$$
X(\hat f)(z) = \ell_2(f) = \ip{f,\psi_2}
$$ 
for all $f \in \psi_1^{\bot}$. 

Let $\phi_1, \phi_2$ be orthogonal unit vectors such that $\psi_1 \in \Cb \cdot \phi_1$ and $\psi_2 \in \Cb \cdot \phi_2$ (it is possible for $\psi_1$ or $\psi_2$ to be zero). Notice that, if $f \in \psi_1^{\bot} $, then 
\begin{equation}\label{eqn:ip with psi1}
\hat{f}(z) = \ip{f,\psi_1}=0.
\end{equation}
Likewise, if $f \in \psi_1^{\bot} \cap \psi_2^{\bot}$, then 
\begin{equation}\label{eqn:ip with psi2}
X(\hat f)(z) = \ip{f,\psi_2}=0. 
\end{equation}

Now extend $\{\phi_1, \phi_2\}$ to an orthonormal basis $\{ \phi_j\}$. Then by Equation~\eqref{eqn:ip with psi1},
\begin{equation}\label{eqn:B and psi1}
\hat \Bfd(z)  = \sum \abs{\hat\phi_j(z)}^2 = \abs{\hat\phi_1(z)}^2 = \abs{ \ip{\phi_1, \psi_1}}^2 = \norm{\psi_1}^2. 
\end{equation}
Further, if $f =\sum c_j \phi_j \in \Hc(M)$, then 
$$
\abs{\hat{f}(z)} = \abs{\ip{f, \psi_1} }= \abs{c_1} \norm{\psi_1} \leq \norm{f} \norm{\psi_1}. 
$$
Thus (1) is true. 

Now suppose that $\hat \Bfd(z) \neq 0$. Since 
$$
\Phi^* g_M(X,\bar X)= X \bar X \left( \log \hat\Bfd \right)(z)= \frac{ X \bar X\left( \hat\Bfd\right)(z)}{\hat\Bfd(z)}-  \frac{ \bar X\left( \hat\Bfd\right)(z)}{\hat\Bfd(z)}\cdot\frac{ X \left( \hat\Bfd\right)(z)}{\hat\Bfd(z)}
$$
and $\hat\Bfd = \sum \abs{\hat \phi_j}^2$, Equations~\eqref{eqn:ip with psi1}, \eqref{eqn:ip with psi2}, and ~\eqref{eqn:B and psi1} imply
\begin{align*}
\Phi^* g_M(X,\bar X)& =\frac{1}{\hat\Bfd(z)} \sum \abs{X \left( \hat\phi_j\right)(z)}^2 - \frac{1}{\hat\Bfd(z)^2} \abs{\sum \overline{\hat\phi_j(z)} X \left( \hat \phi_j\right)(z) }^2 \\
& = \frac{1}{\hat\Bfd(z)}\left( \abs{X \left(  \hat\phi_1\right)(z)}^2+\abs{X \left( \hat \phi_2\right)(z)}^2 \right) -  \frac{1}{\hat\Bfd(z)^2}\abs{\overline{\hat\phi_1(z)} X \left( \hat \phi_1\right)(z)}^2 \\
& = \frac{1}{\hat\Bfd(z)}\abs{X \left( \hat \phi_2\right)(z)}^2=\frac{1}{\hat\Bfd(z)} \abs{ \ell_2( \phi_2)}^2 = \frac{1}{\hat\Bfd(z)} \abs{ \ip{\phi_2, \psi_2}}^2 =  \frac{1}{\hat\Bfd(z)} \norm{\psi_2}^2. 
\end{align*}
Further, if $f =\sum c_j \phi_j \in \Hc(M)$ and $\hat f(z) = 0$, then $f \in \psi_1^{\bot}$ and so  Equation~\eqref{eqn:ip with psi2} implies that
$$
 \abs{X(\hat f)(z)} = \abs{\ip{f, \psi_2} }= \abs{c_2} \norm{\psi_2} \leq \norm{f} \norm{\psi_2}. 
$$
So (2) is true. 
\end{proof}

\subsection{Bounds on the Bergman kernel in local coordinates}\label{sec:bounding bergman in local coordinates}

Results in~\cite{WY2020} provide uniform bounds on the Bergman kernel in local coordinates. 

\begin{proposition}\cite{WY2020} \label{prop:upper bounds on Bergman stuff} For every $m \in \Nb$ and $r \in (0,1)$ there exist constants $\{ C_{a,b}\}_{a,b \in \Zb^m_{\geq 0}}$  such that: If $M$ is a complex $m$-manifold, $\Phi : \Bb^m \rightarrow M$ is a holomorphic embedding, and $\beta : \Bb^m \times \Bb^m \rightarrow \Cb$ is defined by 
$$
(\Phi \times \Phi)^* \Bf_M = \beta(u,w)  du^1 \wedge \dots \wedge du^m \wedge d\bar{w}^1 \wedge \dots \wedge d\bar{w}^m,
$$
then
\begin{align*}
\abs{\frac{\partial^{\abs{a}+\abs{b}}\beta}{\partial u^{a}\partial \bar{w}^b}(u,w)} \leq C_{a,b}
\end{align*}
for all $u,w \in r\Bb$.  

\end{proposition}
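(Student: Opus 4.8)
The plan is to express the coefficient $\beta(u,w)$ as a sum over an orthonormal basis and then leverage the $L^2$-boundedness already recorded in Equation~\eqref{eqn:bound on L2 norm} together with Cauchy-type estimates for holomorphic functions. Concretely, fix $r \in (0,1)$ and an intermediate radius $r < r_1 < 1$. For a holomorphic embedding $\Phi : \Bb^m \to M$ and $f \in \Hc(M)$, write $\Phi^* f = \hat f \, dz^1 \wedge \cdots \wedge dz^m$ as in the text, so that $\int_{\Bb^m} \abs{\hat f}^2 \, d\Leb \leq \norm{f}^2$ by~\eqref{eqn:bound on L2 norm}. If $\{\phi_j\}$ is an orthonormal basis of $\Hc(M)$, then $(\Phi \times \Phi)^* \Bf_M = \sum_j \hat\phi_j(u) \overline{\hat\phi_j(w)} \, du^1 \wedge \cdots \wedge du^m \wedge d\bar w^1 \wedge \cdots \wedge d\bar w^m$, i.e. $\beta(u,w) = \sum_j \hat\phi_j(u)\overline{\hat\phi_j(w)}$, the (off-diagonal) Bergman kernel of the weighted space $\{\hat f : f \in \Hc(M)\} \subset L^2(\Bb^m, d\Leb)$ — or rather, it is dominated by the genuine Bergman kernel of $L^2(\Bb^m)$ since our functions form a (not necessarily closed, not necessarily dense) subspace with $L^2$-norm no larger than $\norm{\cdot}$.

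The key step is a standard reproducing-kernel / Cauchy-estimate argument. By Cauchy's integral formula applied on polydiscs of fixed radius $\sim r_1 - r$ around points of $r\Bb^m$, there is a constant $C_{a,b}'=C_{a,b}'(m,r,r_1)$ such that
\begin{align*}
\abs{\frac{\partial^{\abs{a}}\hat f}{\partial u^a}(u)} \leq C_{a,b}' \norm{\hat f}_{L^2(r_1 \Bb^m)} \leq C_{a,b}' \norm{f}
\end{align*}
for all $u \in r\Bb^m$ and all $f \in \Hc(M)$; this uses only~\eqref{eqn:bound on L2 norm} and the mean-value property of holomorphic functions. Applying this to $f = \phi_j$ and using the Bessel/Parseval bound $\sum_j \abs{c_j}^2 \le \norm{\sum_j c_j \phi_j}^2$ — equivalently, viewing $\sum_j \frac{\partial^{\abs a}\hat\phi_j}{\partial u^a}(u) \overline{\frac{\partial^{\abs b}\hat\phi_j}{\partial w^b}(w)}$ as the reproducing kernel for the functionals $f \mapsto \partial^a \hat f(u)$ and $f \mapsto \partial^b \hat f(w)$ composed via the orthonormal expansion — the Cauchy–Schwarz inequality gives
\begin{align*}
\abs{\frac{\partial^{\abs{a}+\abs{b}}\beta}{\partial u^{a}\partial \bar w^{b}}(u,w)}
= \abs{\sum_j \frac{\partial^{\abs a}\hat\phi_j}{\partial u^a}(u) \, \overline{\frac{\partial^{\abs b}\hat\phi_j}{\partial w^b}(w)}}
\leq \left(\sum_j \abs{\tfrac{\partial^{\abs a}\hat\phi_j}{\partial u^a}(u)}^2\right)^{1/2}\left(\sum_j \abs{\tfrac{\partial^{\abs b}\hat\phi_j}{\partial w^b}(w)}^2\right)^{1/2}.
\end{align*}
Each factor is bounded: $\sum_j \abs{\partial^{\abs a}\hat\phi_j(u)/\partial u^a}^2 = \sup\{\abs{\partial^{\abs a}\hat f(u)/\partial u^a}^2 : f \in \Hc(M),\ \norm{f}=1\}$ (the extremal-function identity, exactly as in Theorem~\ref{thm:formulas for Bergman stuff}(1) but for the functional $f \mapsto \partial^a \hat f(u)$), which by the Cauchy estimate above is at most $(C'_{a,0})^2$. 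Setting $C_{a,b} := C'_{a,0} C'_{b,0}$ finishes the proof, and this constant depends only on $m$, $r$, and the fixed choice of $r_1$, hence only on $m$ and $r$.

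The main point requiring care — though it is not really an obstacle — is justifying the interchange of differentiation with the infinite sum defining $\beta$ and the validity of the extremal-function identity $\sum_j \abs{\ell(\phi_j)}^2 = \sup_{\norm f = 1}\abs{\ell(f)}^2$ for the continuous linear functionals $\ell_u^a : f \mapsto \partial^a \hat f(u)$. Both follow from the same observation: each $\ell_u^a$ is bounded on $\Hc(M)$ by the Cauchy estimate plus~\eqref{eqn:bound on L2 norm}, so by Riesz representation $\ell_u^a(f) = \ip{f, \psi_u^a}$ for some $\psi_u^a \in \Hc(M)$ with $\norm{\psi_u^a} = \sup_{\norm f=1}\abs{\ell_u^a(f)} \le C'_{a,0}$, and then $\sum_j \abs{\ell_u^a(\phi_j)}^2 = \sum_j \abs{\ip{\phi_j,\psi_u^a}}^2 = \norm{\psi_u^a}^2$; locally uniform convergence (and hence termwise differentiability) of $\sum_j \hat\phi_j(u)\overline{\hat\phi_j(w)}$ on $r\Bb^m \times r\Bb^m$ then follows from the Cauchy–Schwarz bound above applied with $a=b=0$ on the slightly larger ball $r_1\Bb^m$. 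This is all routine Bergman-space theory, so no genuine difficulty arises; the content is simply that~\eqref{eqn:bound on L2 norm} makes every estimate uniform in $M$ and $\Phi$.
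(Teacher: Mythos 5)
Your proof is correct and follows the same route as the paper: write $\beta(u,w)=\sum_j \hat\phi_j(u)\overline{\hat\phi_j(w)}$ for an orthonormal basis $\{\phi_j\}$ of $\Hc(M)$, and use Equation~\eqref{eqn:bound on L2 norm} to get the uniform $L^2$ bound on the $\hat\phi_j$'s that controls everything. The only difference is that the paper delegates the remaining analysis (sub-mean-value/Cauchy estimates, Riesz representation, Cauchy--Schwarz, and locally uniform convergence) to the cited Wu--Yau result, Lemma~\ref{lem:WY lemma}, whereas you re-derive that lemma inline; the underlying argument is the same.
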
 

To prove the proposition we use the following lemma from ~\cite{WY2020}.

\begin{lemma}\cite[Corollary 24]{WY2020}\label{lem:WY lemma} Let $\Omega \subset \Cb^m$ be domain. Let $(h_j)_{j =1}^\infty$ be a sequence of holomorphic functions on $\Omega$ with the following property: There is an integer $N_0 \geq 0$ such that for all $N \geq N_0$, 
$$
\int_\Omega \abs{ \sum_{j=1}^N c_j h_j(z)}^2d\Leb \leq \sum_{j=1}^N \abs{c_j}^2 \quad \text{for all} \quad c_1,\dots, c_N \in \Cb.
$$
Then the series 
$$
H(z,w) = \sum_{j=1}^\infty h_j(z) \overline{h_j(w)} 
$$
converges uniformly and absolutely on every compact subset $\Omega \times \Omega$. Furthermore, there exists $C=C(m) > 0$ such that for every compact subset $E \subset \Omega$, 
$$
\max_{(z,w) \in E\times E} \abs{\frac{\partial^{\abs{a}+\abs{b}}H}{\partial z^{a}\partial \bar{w}^b}} \leq \frac{Ca!b!}{\dist_{\Cb^m}(E,\partial\Omega)^{2m+\abs{a}+\abs{b}}}
$$
for all multi-indices $a,b$. 
\end{lemma}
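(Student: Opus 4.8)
\textbf{Proof proposal for Proposition~\ref{prop:upper bounds on Bergman stuff}.}

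The plan is to transport everything to the fixed domain $\Bb^m$ and apply Lemma~\ref{lem:WY lemma}. Fix $m$ and $r \in (0,1)$, and choose $r_1 \in (r,1)$; set $E = \overline{r_1\Bb^m}$, a compact subset of $\Omega := \Bb^m$, with $\dist_{\Cb^m}(E, \partial \Bb^m) = 1 - r_1 > 0$. Now fix $M$ and a holomorphic embedding $\Phi : \Bb^m \rightarrow M$. Let $\{\phi_j\}$ be an orthonormal basis of $\Hc(M)$ (with the convention that the sum is finite or that we work with the first $N$ for each $N$), and pull back each $\phi_j$ to a holomorphic function $h_j := \widehat{\phi_j}$ on $\Bb^m$ via $\Phi^*\phi_j = h_j\, dz^1 \wedge \cdots \wedge dz^m$. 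The key point is that Equation~\eqref{eqn:bound on L2 norm} gives, for any $c_1,\dots,c_N \in \Cb$,
\begin{align*}
\int_{\Bb^m} \Bigl| \sum_{j=1}^N c_j h_j \Bigr|^2 d\Leb = \Bigl\| \sum_{j=1}^N c_j \phi_j \Bigr\|^2_{\Hc(M), \text{restricted}} \leq \Bigl\| \sum_{j=1}^N c_j \phi_j \Bigr\|^2 = \sum_{j=1}^N |c_j|^2,
\end{align*}
so the hypothesis of Lemma~\ref{lem:WY lemma} holds with $N_0 = 0$. (If $\Hc(M)$ is finite-dimensional or zero, the statement is trivial or vacuous; the convergence claim is only an issue in the infinite-dimensional case.)

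Next I would identify the pulled-back kernel with $H$. By definition $(\Phi\times\Phi)^*\Bf_M = \sum_j (\Phi^*\phi_j) \wedge \overline{\Phi^*\phi_j}$ evaluated at the two points, which in the chosen trivialization means precisely $\beta(u,w) = \sum_j h_j(u)\overline{h_j(w)} = H(u,w)$ in the notation of Lemma~\ref{lem:WY lemma}. Lemma~\ref{lem:WY lemma} then yields that this series converges uniformly and absolutely on compact subsets of $\Bb^m \times \Bb^m$ — in particular the Bergman kernel $\Bf_M$ is well-defined and the expression for $\beta$ makes sense — and gives the derivative bound
\begin{align*}
\max_{(u,w) \in E \times E} \Bigl| \frac{\partial^{|a|+|b|}H}{\partial u^a \partial \bar w^b} \Bigr| \leq \frac{C(m)\, a!\, b!}{(1-r_1)^{2m+|a|+|b|}}.
\end{align*}
Since $E = \overline{r_1\Bb^m} \supset r\Bb^m$, setting $C_{a,b} := C(m)\, a!\, b! \, (1-r_1)^{-(2m+|a|+|b|)}$ gives a constant depending only on $m$, $r$ (through the fixed choice of $r_1$), and the multi-indices $a,b$ — not on $M$ or $\Phi$ — which is exactly what is claimed.

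I do not anticipate a serious obstacle: Lemma~\ref{lem:WY lemma} is doing all the analytic work, and the only things to check carefully are (i) that the $L^2$-contraction hypothesis transfers correctly under $\Phi^*$, which is immediate from Equation~\eqref{eqn:bound on L2 norm} and the fact that an orthonormal set stays orthonormal, and (ii) bookkeeping the trivialization so that $\beta$ is literally $H$. The one mild subtlety worth a sentence in the write-up is the independence of the basis $\{\phi_j\}$: the sum $\sum_j h_j(u)\overline{h_j(w)}$ is basis-independent (standard reproducing-kernel argument, or cite Theorem~\ref{thm:formulas for Bergman stuff}-type reasoning), so $\beta$ is well-defined before we even invoke the lemma. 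Everything else is routine.
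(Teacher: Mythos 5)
Your proposal does not address the statement under review. The statement is Lemma~\ref{lem:WY lemma} itself (the result quoted from \cite[Corollary 24]{WY2020} about an arbitrary sequence $(h_j)$ of holomorphic functions on a domain $\Omega$ satisfying the $L^2$-contraction hypothesis), whereas what you wrote is a deduction of Proposition~\ref{prop:upper bounds on Bergman stuff} \emph{from} that lemma: you pull back an orthonormal basis of $\Hc(M)$ via $\Phi$, verify the hypothesis using Equation~\eqref{eqn:bound on L2 norm}, identify $\beta$ with $H$, and then invoke the lemma as a black box. As a proof of the Proposition this is fine and essentially identical to the paper's own argument (which explicitly says it argues as in \cite[Lemma 26]{WY2020}), but it leaves the actual content of the Lemma --- the convergence of $H$ and the derivative bounds with the constant $C(m)a!b!\dist_{\Cb^m}(E,\partial\Omega)^{-(2m+\abs{a}+\abs{b})}$ --- completely untouched. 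You cannot prove a statement by assuming it and proving one of its corollaries.

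A proof of the Lemma requires the following analytic steps, none of which appear in your write-up. First, from the hypothesis one extracts a uniform pointwise bound: for $z \in \Omega$ and any $N \geq N_0$, applying the sub-mean-value inequality $\abs{f(z)}^2 \leq \frac{C_m}{\dist_{\Cb^m}(z,\partial\Omega)^{2m}} \int_\Omega \abs{f}^2 d\Leb$ to $f = \sum_{j=1}^N c_j h_j$ and choosing $c_j = \overline{h_j(z)}\bigl(\sum_{k \leq N}\abs{h_k(z)}^2\bigr)^{-1/2}$ (duality) yields $\sum_{j=1}^N \abs{h_j(z)}^2 \leq C_m \dist_{\Cb^m}(z,\partial\Omega)^{-2m}$, uniformly in $N$. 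Second, Cauchy--Schwarz then gives absolute and uniform convergence of $\sum_j h_j(z)\overline{h_j(w)}$ on compact subsets of $\Omega\times\Omega$, with $\abs{H(z,w)}$ bounded by the geometric mean of the two pointwise bounds. Third, the derivative estimates come from Cauchy's integral estimates applied in $z$ and in $\bar w$ separately on polydiscs of radius comparable to $\dist_{\Cb^m}(E,\partial\Omega)$ contained in $\Omega$, which is where the factors $a!\,b!$ and the exponent $2m+\abs{a}+\abs{b}$ arise (note $H$ is holomorphic in $z$ and anti-holomorphic in $w$, so termwise Cauchy estimates plus Cauchy--Schwarz apply). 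Until these steps are supplied, the proposal has a genuine gap: it proves the wrong statement and presupposes the one it was asked to establish.
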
 

\begin{proof}[Proof of Proposition~\ref{prop:upper bounds on Bergman stuff}]  We argue as in the proof of~\cite[Lemma 26]{WY2020}. Fix an orthonormal basis $\{\phi_j\}_{j \in J}$ of $\Hc(M)$. Define 
 $h_j : \Bb^m \rightarrow \Cb$ by 
$$
\Phi^* \phi_j = h_j dw^1 \wedge \dots \wedge dw^m.
$$
Then 
$$
\beta(u,w) = \sum_j h_j(u) \overline{h_j(w)}.
$$
Further, if $J' \subset J$ is finite and $(c_j)_{j \in J'}$ are complex numbers, then applying Equation~\eqref{eqn:bound on L2 norm} to $f := \sum_{j \in J'} c_j \phi_j$, we have 
\begin{align*}
\int_{\Bb^m} \abs{ \sum_{j \in J'} c_j h_j(w)}^2d\Leb \leq \norm{f}^2 =\sum_{j\in J'} \abs{c_j}^2.
 \end{align*} 
So the proposition follows from Lemma~\ref{lem:WY lemma}.
\end{proof} 

\subsection{The Bergman kernel on a domain} We now specialize to the case where $M = \Omega \subset \Cb^m$ is a domain. In this case, the Bergman kernel is often defined using holomorphic functions instead of forms. In particular, let $A^2(\Omega)$ denote the space of holomorphic functions $f : \Omega \rightarrow \Cb$ which are square integrable with respect to the Lebesgue measure. Then the \emph{Bergman kernel} is defined by 
\begin{align*}
\Bf_\Omega(z,w) = \sum_{j} \phi_j(z) \overline{\phi_j(w)}
\end{align*}
where $\{ \phi_j\}$ is some (any) orthonormal basis of $A^2(\Omega)$. 

Arguing as in Equation~\eqref{eqn:bound on L2 norm}, the map
\begin{align*}
f \in A^2(\Omega) \mapsto f dz^1 \wedge \dots \wedge dz^m  \in \Hc(\Omega)
\end{align*}
is an isomorphism of Hilbert spaces and hence
\begin{align*}
\Bf_\Omega(z,w) dz^1 \wedge \dots \wedge dz^m \wedge d\bar{w}^1 \wedge \dots \wedge d\bar{w}^m
\end{align*}
coincides with the differential form definition of the Bergman kernel.


\section{Lower bounds on the Bergman kernel and metric}\label{sec:lower bounds on bergman stuff}


Given a complex manifold, recall that $g_M$ denotes the Bergman metric and $\Bfd_M$ denotes the diagonal Bergman kernel in Equation~\eqref{eqn:diag BK} on a complex $m$-manifold $M$. In this section we establish lower bounds on both for \textsf{wHHR} Stein manifolds.

\begin{theorem}\label{thm:lower bounds on Bergman stuff} For every $m \in \Nb$ and $s \in (0,1)$ there exists $\epsilon=\epsilon(m,s) > 0$ such that: If $M$ is a Stein $m$-manifold, $\Phi : \Bb^m \rightarrow M$ is a holomorphic embedding, $\phi : M \rightarrow [0,1]$ is a plurisubharmonic function with $\Levi( \phi \circ \Phi) \geq s^2 g_{\Euc}$ on $\Bb^m$, then: 
\begin{enumerate}
\item If $X = \sum_{j=1}^m x_j \frac{\partial}{\partial z^j} \in T_0^{1,0} \Cb^m$, then 
$$
g_M(d\Phi_0X, d\Phi_0\bar X) \geq \epsilon \norm{X}.
$$
\item $\hat \Bfd(0) \geq \epsilon$ where $\hat \Bfd : \Bb^m \rightarrow [0,\infty)$ is defined by 
$$
\Phi^* \Bfd_M = \hat \Bfd  dz^1 \wedge \dots \wedge dz^m \wedge d\bar{z}^1 \wedge \dots \wedge d\bar{z}^m.
$$
\end{enumerate} 

\end{theorem}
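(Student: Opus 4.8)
The plan is to produce, for a given unit vector $X$, a holomorphic $(m,0)$-form on $M$ with controlled $L^2$ norm that either takes a definite nonzero value at $z_0 := \Phi(0)$ (to get the lower bound on $\hat{\Bfd}(0)$ in part (2)) or vanishes at $z_0$ but has derivative in the direction $X$ of definite size (to get the lower bound on the Bergman metric in part (1)). In view of the extremal characterizations in Theorem~\ref{thm:formulas for Bergman stuff}, both estimates reduce to constructing such forms. I would construct them by solving a $\bar\partial$-equation with a weight built from the plurisubharmonic function $\phi$, exactly as in Theorem~\ref{thm:existence}, which is where the Stein hypothesis is used.

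Concretely, start from a local holomorphic $(m,0)$-form near $z_0$ with the desired jet at $z_0$: for part (2) take $\chi(\Phi^{-1}(z)) \, dz^1 \wedge \cdots \wedge dz^m$ pulled over via $\Phi$, where $\chi$ is a cutoff supported in $\Bb^m$ and equal to $1$ near $0$; for part (1) take $\chi(\Phi^{-1}(z)) \cdot \big(\sum_j \bar x_j \, \Phi^{-1}_j(z)\big)$ times the same form (so that its value at $z_0$ is $0$ but its $X$-derivative is $\norm{X}^2$). Call this compactly supported smooth $(m,0)$-form $\alpha$; then $f := \bar\partial \alpha$ is a smooth, $\bar\partial$-closed $(m,1)$-form supported in an annular region inside $\Phi(\Bb^m)$ away from $z_0$. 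Apply Theorem~\ref{thm:existence} with $g$ an auxiliary Kähler metric, $\lambda_1$ a smooth strictly plurisubharmonic exhaustion-type function with $\Levi(\lambda_1) \geq c\,g$, and the crucial singular weight $\lambda_2$ built as a large multiple of $\phi$ together with a term of the form $2m \cdot \chi(\Phi^{-1}(z)) \log \norm{\Phi^{-1}(z)}$ (for part (2)) or a higher multiple of $\log\norm{\Phi^{-1}(z)}$ (for part (1)); the point of this logarithmic singularity is that finiteness of the weighted $L^2$ norm of the solution $F$ forces $F$ to vanish to the required order at $z_0$. The hypothesis $\Levi(\phi\circ\Phi)\geq s^2 g_{\Euc}$ guarantees that, after pulling back by $\Phi$, the weight $\lambda_2$ can be made plurisubharmonic (near $z_0$ the negative Levi form of the log term is dominated by a large multiple of $s^2 g_{\Euc}$), so Theorem~\ref{thm:existence} applies. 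One then sets $F_0 := \alpha - F$: this is a global holomorphic $(m,0)$-form on $M$ (since $\bar\partial F_0 = 0$), its $L^2$ norm is bounded above by a constant depending only on $m$ and $s$ (the right-hand side of the $\bar\partial$ estimate is an explicit integral of $\norm{\bar\partial\alpha}^2$ against a bounded weight, all supported away from $z_0$), and because $F$ vanishes at $z_0$ (resp. vanishes to second order at $z_0$), $F_0$ inherits the jet of $\alpha$ at $z_0$: $\hat{F_0}(0) = \hat\alpha(0) = 1$ (resp. $\hat{F_0}(0) = 0$ and $X(\hat{F_0})(0) = \norm{X}^2$). Normalizing $F_0$ and invoking Theorem~\ref{thm:formulas for Bergman stuff}(1) gives $\hat{\Bfd}(0) \geq 1/\norm{F_0}^2 \geq \epsilon$; for part (1), combining $\hat{\Bfd}(0) \leq C(m,s)$ from Proposition~\ref{prop:upper bounds on Bergman stuff} with $F_0/\norm{F_0}$ plugged into Theorem~\ref{thm:formulas for Bergman stuff}(2) gives $\Phi^*g_M(X,\bar X) \geq \abs{X(\hat{F_0})(0)}^2 / (\norm{F_0}^2 \hat{\Bfd}(0)) \geq \epsilon \norm{X}^2$ (and one should write $\norm{X}$ versus $\norm{X}^2$ consistently — the theorem statement as written has $\norm{X}$, presumably meaning $\norm{X}^2$ up to the convention, so I would match whatever normalization the paper adopts).

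I expect the main obstacle to be bookkeeping the weight $\lambda_2$ so that three competing requirements hold simultaneously with constants depending only on $m$ and $s$: (i) $\lambda_1 + \lambda_2$ is globally plurisubharmonic on $M$ (this needs the coefficient of $\phi$ to beat the negative contribution of the logarithmic singularity, which is exactly what $\Levi(\phi\circ\Phi)\geq s^2 g_{\Euc}$ buys, with a loss like $A/s^2$ as in the proof of Theorem~\ref{thm:green_fcn_bds}); (ii) the weight $e^{-(\lambda_1+\lambda_2)}$ is bounded above on the support of $\bar\partial\alpha$, so that the right-hand side of the $\bar\partial$-estimate is $\lesssim_{m,s} 1$; and (iii) the singularity of $\lambda_2$ at $z_0$ is severe enough to force the required vanishing order of $F$ there — for a simple zero one needs the weight to be non-integrable like $\norm{w}^{-2m}$, for the metric estimate one needs it non-integrable like $\norm{w}^{-2m-2}$. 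Managing these constants uniformly, and checking that the auxiliary metric $g$ and function $\lambda_1$ can be chosen on an arbitrary Stein manifold without introducing dependence on $M$ (one restricts attention to a fixed relatively compact piece, namely $\Phi(\Bb^m)$, and uses a standard plurisubharmonic function there, e.g. $\norm{\Phi^{-1}(z)}^2$ suitably cut off and corrected), is the technical heart of the argument; the rest is the algebra already packaged in Theorem~\ref{thm:formulas for Bergman stuff}.
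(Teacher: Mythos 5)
Your proposal is correct and follows essentially the same route as the paper: reduce to the extremal formulas of Theorem~\ref{thm:formulas for Bergman stuff}, then produce the required holomorphic $(m,0)$-forms with controlled $L^2$ norm by correcting a cutoff local datum via Theorem~\ref{thm:existence} with a weight whose logarithmic singularity at $\Phi(0)$ is strong enough to force the needed jet vanishing of the correction. The only difference is packaging: where you would build the singular weight directly from a cutoff $\log\norm{\Phi^{-1}(\cdot)}$ plus a large multiple of $\phi$ (the ``bookkeeping'' you flag as the technical heart), the paper instead takes $\lambda_2 = 2(m+n)\,\Gf_M(\cdot,\Phi(0))$, a multiple of the pluricomplex Green function, which is automatically globally plurisubharmonic, and simply cites the two-sided estimates of Theorem~\ref{thm:green_fcn_bds} --- whose proof is exactly your weight construction.
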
 

\begin{remark}
The following argument is based on the proof of~\cite[Proposition 8.9]{GW1979} which itself is based on work of H\"ormander~\cite{H1965}. See also \cite[Section 6]{Catlin1989} and \cite[Theorem 3.4]{M1994}.
\end{remark}

Theorem~\ref{thm:lower bounds on Bergman stuff} is an immediate consequence of the following lemma and Theorem~\ref{thm:formulas for Bergman stuff}.

\begin{lemma}\label{thm:extension} For every $m,n \in \Nb$ and $s \in (0,1)$ there exists $C=C(m,n,s) > 0$ such that: If $M$ is a Stein $m$-manifold, $\Phi : \Bb^m \rightarrow M$ is a holomorphic embedding, $\phi : M \rightarrow [0,1]$ is a plurisubharmonic function with $\Levi( \phi \circ \Phi) \geq s^2 g_{\Euc}$ on $\Bb^m$, $f : \Bb^m \rightarrow \Cb$ is holomorphic, and $\int_{\Bb^m} \abs{f}^2 d\Leb< \infty$, then there exists a holomorphic $(m,0)$-form $F$ on $M$ where:
\begin{enumerate}
\item If $\Phi^* F = \hat{F} dw^1 \wedge \dots \wedge dw^m$, then 
\begin{align*}
\frac{\partial^{\abs{\alpha}} \hat{F}}{\partial w^{\alpha}}(0) = \frac{\partial^{\abs{\alpha}} f}{\partial w^{\alpha}}(0) 
\end{align*}
for all multi-indices $\alpha$ with $\abs{\alpha} \leq n$.
\item $\norm{F}_{\Hc(M)}^2 \leq C\int_{\Bb^m} \abs{f}^2 d\Leb$.
\end{enumerate}
\end{lemma}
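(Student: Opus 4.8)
The plan is to build the form $F$ by the standard Hörmander $\bar\partial$-technique: first extend $f$ to a smooth form on $M$ using a cutoff supported in $\Phi(\Bb^m)$, then correct it to a holomorphic form by solving $\bar\partial$ with an $L^2$-estimate using Theorem~\ref{thm:existence}, choosing the weight so that (i) the singularity of the weight at $\zeta := \Phi(0)$ forces all derivatives of the correction up to order $n$ to vanish at $\zeta$, and (ii) the curvature hypothesis $\Levi(\phi\circ\Phi)\geq s^2 g_{\Euc}$ supplies the strict plurisubharmonicity needed to run the estimate.

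In detail, first I would fix a radius $\rho\in(0,1)$ and a smooth cutoff $\chi:\Bb^m\to[0,1]$ with $\chi\equiv 1$ near $\rho\Bb^m$ and $\supp\chi\subset\Bb^m$, and set $\tilde F$ to be the $(m,0)$-form which equals $\chi\cdot(f\circ\Phi^{-1})\,dz^1\wedge\cdots\wedge dz^m$ on $\Phi(\Bb^m)$ (in the coordinates given by $\Phi$) and $0$ elsewhere. Then $f_0 := \bar\partial\tilde F$ is a smooth $(m,1)$-form, $\bar\partial$-closed, supported in the annular region where $d\chi\neq 0$ — crucially, away from $\zeta$. Next I choose the two weights for Theorem~\ref{thm:existence}: take $\lambda_2(z) = (2m+2n)\,G(z)$ where $G(z)$ is a plurisubharmonic function on $M$ that behaves like $\log\norm{\Phi^{-1}(z)-0}$ near $\zeta$ — e.g.\ glue $\chi(\Phi^{-1}(z))\log\norm{\Phi^{-1}(z)}$ to a large multiple of $\phi$ exactly as in the proof of Theorem~\ref{thm:green_fcn_bds}, so that $\lambda_2$ is globally psh after adding $A/s^2\cdot\phi$ — and take $\lambda_1$ to be a large constant multiple of $\phi$ plus a smooth exhaustion (which exists since $M$ is Stein) so that $\Levi(\lambda_1)\geq c\,g$ for the chosen background Kähler metric $g$; since $\Levi(\phi\circ\Phi)\geq s^2g_{\Euc}$ on $\Bb^m$, on $\Phi(\rho\Bb^m)$ we get a definite lower bound, and on $M\setminus\Phi(\rho\Bb^m)$ the exhaustion term handles it. Then I would take for $g$ a Kähler metric that is $g_{\Euc}$ (pulled back) on $\Phi(\Bb^m)$ and anything Kähler outside; absorb the constant $c$ into the weights. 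Because $f_0$ vanishes near $\zeta$, the weighted integral $\int_M \norm{f_0}_g^2 c^{-1} e^{-(\lambda_1+\lambda_2)}\,dV_g$ is finite — the singular weight $e^{-\lambda_2}\sim\norm{\cdot-0}^{-(2m+2n)}$ is integrated against a form supported away from that singularity — and is bounded by $C\int_{\Bb^m}\abs{f}^2\,d\Leb$ with $C$ depending only on $m,n,s,\rho$ and the fixed data $\chi, A$. Applying Theorem~\ref{thm:existence} yields a smooth $(m,0)$-form $u$ with $\bar\partial u = f_0$ and $i^{m^2}\int_M e^{-(\lambda_1+\lambda_2)}u\wedge\bar u \leq C\int_{\Bb^m}\abs{f}^2\,d\Leb$. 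Set $F := \tilde F - u$; then $\bar\partial F = 0$, so $F$ is a holomorphic $(m,0)$-form on $M$. For property (2): $\norm{u}_{\Hc(M)}^2 \leq i^{m^2}\int_M e^{-(\lambda_1+\lambda_2)}u\wedge\bar u$ up to the bounded factor $e^{\sup(\lambda_1+\lambda_2)}$ — here I need $\lambda_1+\lambda_2$ bounded above, which holds after subtracting a constant since $\phi\leq 1$, $\chi\log\norm{\cdot}\leq 0$, and the exhaustion part only hurts at infinity where I can instead note $u\in L^2$ directly from finiteness; then $\norm{F}\leq\norm{\tilde F}+\norm{u}\lesssim\norm{f}_{L^2(\Bb^m)}$. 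For property (1): finiteness of $i^{m^2}\int_M e^{-\lambda_2}u\wedge\bar u$ near $\zeta$, where $e^{-\lambda_2}\sim\norm{\Phi^{-1}(z)}^{-(2m+2n)}$, forces $\hat u := $ (coefficient of $\Phi^*u$) to vanish to order $\geq n+1$ at $0$, by the standard fact that $\int_{\norm{w}<\epsilon}\abs{g(w)}^2\norm{w}^{-(2m+2n)}\,d\Leb < \infty$ implies all derivatives of the holomorphic function $g$ of order $\leq n$ vanish at $0$. Hence $\partial^\alpha\hat F(0) = \partial^\alpha(\chi f)(0) - \partial^\alpha\hat u(0) = \partial^\alpha f(0)$ for $\abs\alpha\leq n$, since $\chi\equiv 1$ near $0$.

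The main obstacle is arranging all the weights to be simultaneously plurisubharmonic (indeed, to give the curvature bound $\Levi(\lambda_1)\geq cg$) while keeping the output constant $C$ uniform — depending only on $m$, $n$, $s$ — and not on $M$, $\Phi$, $\phi$, or $f$. The curvature hypothesis $\Levi(\phi\circ\Phi)\geq s^2g_{\Euc}$ is only supplied on $\Bb^m$, so the construction of $\lambda_1$ outside $\Phi(\rho\Bb^m)$ (using the Steinness of $M$ and a psh exhaustion) inevitably involves non-uniform data — the resolution is that the $L^2$-estimate only ever "sees" the weights on the support of $f_0$ and the integrability near $\zeta$, both of which sit inside $\Phi(\Bb^m)$, so the non-uniform exterior part can be chosen freely and then discarded from the final bound (this is exactly the mechanism in the cited proof of~\cite[Proposition 8.9]{GW1979}). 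I would also need to double-check that the gluing of $\chi(\Phi^{-1}(z))\log\norm{\Phi^{-1}(z)}$ to $\frac{A}{s^2}\phi(z)$ produces a genuinely plurisubharmonic function on all of $M$ with $A$ depending only on $\chi$ (hence only on $m,n$) and $s$ — this is the same computation as in the proof of Theorem~\ref{thm:green_fcn_bds}, adapted so that the pole has the correct strength $2m+2n$.
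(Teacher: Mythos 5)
Your overall strategy is the same as the paper's — cut $f$ off inside $\Phi(\Bb^m)$, correct by solving $\bar\partial$ via Theorem~\ref{thm:existence} with a weight $\lambda_2$ having a pole of strength $2(m+n)$ at $\Phi(0)$, deduce vanishing of the correction's $n$-jet from the weighted integrability, and estimate the Hilbert norm using the fact that the inhomogeneous term is supported in the annulus where $d\chi\neq 0$. Two of your implementation choices, however, do not go through as written.

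First, the background metric. You propose to ``take for $g$ a Kähler metric that is $g_{\Euc}$ (pulled back) on $\Phi(\Bb^m)$ and anything Kähler outside,'' but this is not a construction: two Kähler metrics on overlapping open sets cannot in general be interpolated to a single closed positive $(1,1)$-form, and your argument gives no Kähler potential that realizes the patching. The paper sidesteps this entirely by setting $g := \Levi(\phi) + g_0$, where $g_0$ is a (global) Kähler metric on the Stein manifold $M$; this $g$ is automatically Kähler, and the needed comparison with $g_{\Euc}$ on $\Phi(\Bb^m)$ is not achieved by making $g$ literally Euclidean there but rather by the inequality $\Phi^*g \geq s^2 g_{\Euc}$ (which follows from the Levi form hypothesis) combined with Observation~\ref{obs:diff kahler and L2norms}, which says $\norm{T}_{g_2}^2\,dV_{g_2} \leq \norm{T}_{g_1}^2\,dV_{g_1}$ when $g_1 \leq g_2$ and $T$ is an $(m,q)$-form.

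Second, the smooth weight $\lambda_1$. You take it to be a large multiple of $\phi$ plus a smooth exhaustion, and then, to convert the weighted estimate on the correction $u$ into an unweighted bound on $\norm{u}_{\Hc(M)}$, you assert that ``the exhaustion part only hurts at infinity where I can instead note $u\in L^2$ directly from finiteness.'' This is circular. The only finiteness at hand is $\int_M e^{-(\lambda_1+\lambda_2)}\,u\wedge\bar u < \infty$, and if $\lambda_1\to\infty$ then $e^{-\lambda_1}\to 0$, so that finiteness carries no $L^2$ information at infinity. You need $e^{-\lambda_1}$ bounded \emph{below} globally, i.e.\ $\lambda_1$ bounded \emph{above}. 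The paper arranges this by taking $\lambda_1 := \phi + \phi_0$ with $\phi_0 : M\to[0,1]$ strictly plurisubharmonic and \emph{bounded}; then $\lambda_1 \in [0,2]$ and $\lambda_2 = 2(m+n)\Gf_M(\cdot,\Phi(0)) \leq 0$, so $e^{-(\lambda_1+\lambda_2)} \geq e^{-2}$ and the unweighted norm bound follows with a uniform constant. Note also that Theorem~\ref{thm:existence} only requires $\Levi(\lambda_1) \geq c\,g$ for \emph{some} positive continuous $c$, not a uniform lower bound, so no unbounded exhaustion is needed to satisfy its hypotheses: the strict positivity of $\Levi(\phi_0)$ already suffices, and the fact that $c$ may become small outside $\Phi(\supp\chi)$ is harmless because the inhomogeneous form $T$ vanishes there.

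One further point where your write-up is less economical than the paper but not wrong: you rebuild the singular weight by gluing $\chi(\Phi^{-1})\log\norm{\Phi^{-1}(\cdot)}$ to a multiple of $\phi$, whereas the paper simply takes $\lambda_2 := 2(m+n)\,\Gf_M(\cdot,\Phi(0))$ and invokes the two-sided estimate $\Gf_M(\Phi(w),\Phi(0)) = \log\norm{w} + O(1)$ from Theorem~\ref{thm:green_fcn_bds}, which was itself proved by exactly the gluing you describe. Factoring the construction through the Green function is cleaner and keeps the constant's dependence transparent, but the underlying computation is the same.
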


We will use the following observation. 

\begin{observation}[{see for instance ~\cite[Lemma 6.4]{bern2010}}]\label{obs:diff kahler and L2norms} If $g_1 \leq g_2$ are K\"ahler metrics on a complex $m$-manifold $M$ and $T$ is $(m,q)$-form, then 
$$
\norm{T}_{g_2}^2 dV_{g_2} \leq \norm{T}_{g_1}^2dV_{g_1}. 
$$
\end{observation}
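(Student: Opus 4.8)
The plan is to reduce everything to a pointwise linear-algebra computation, since both sides of the asserted inequality are top-degree $(m,m)$-forms — hence fiberwise comparable densities — and the inequality is local on $M$.

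First I would fix a point $p \in M$ and choose holomorphic coordinates $z^1,\dots,z^m$ centered at $p$ that simultaneously diagonalize $g_1$ and $g_2$ at $p$: since $g_1(p)$ and $g_2(p)$ are positive-definite Hermitian forms with $g_1(p) \leq g_2(p)$, there is a basis of $T_p^{1,0}M$ in which $g_1(p)$ is the identity and $g_2(p) = \operatorname{diag}(\lambda_1,\dots,\lambda_m)$ with each $\lambda_j \geq 1$; I take $z^j$ so that $\partial/\partial z^j|_p$ is this basis. Then $\partial/\partial z^j|_p$ is $g_1$-orthonormal and $\lambda_j^{-1/2}\,\partial/\partial z^j|_p$ is $g_2$-orthonormal, so comparing determinants of the metric matrices gives $dV_{g_2} = \big(\prod_j \lambda_j\big)\, dV_{g_1}$ at $p$.

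Next I would expand $T$ at $p$. Being of type $(m,q)$, it has the form $T = \sum_{\abs{J}=q} \alpha_J\, dz^1\wedge\cdots\wedge dz^m \wedge d\bar z^J$. Reading the $g_1$-norm off the $g_1$-orthonormal frame (as in the paper's convention) gives $\norm{T}_{g_1}^2 = \sum_J \abs{\alpha_J}^2$. For the $g_2$-norm I would pass to coordinates $w^j = \sqrt{\lambda_j}\,z^j$, in which $\partial/\partial w^j|_p$ is $g_2$-orthonormal; since $dz^1\wedge\cdots\wedge dz^m = \big(\prod_j \lambda_j^{-1/2}\big)\,dw^1\wedge\cdots\wedge dw^m$ and $d\bar z^J = \big(\prod_{j\in J}\lambda_j^{-1/2}\big)\,d\bar w^J$, this yields $\norm{T}_{g_2}^2 = \big(\prod_j \lambda_j^{-1}\big)\sum_J \abs{\alpha_J}^2 \prod_{j\in J}\lambda_j^{-1}$.

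Finally I would combine the two: at $p$,
$$
\norm{T}_{g_2}^2\, dV_{g_2} = \Big(\sum_J \abs{\alpha_J}^2 \prod_{j\in J}\lambda_j^{-1}\Big) dV_{g_1} \leq \Big(\sum_J \abs{\alpha_J}^2\Big) dV_{g_1} = \norm{T}_{g_1}^2\, dV_{g_1},
$$
where the inequality uses $\lambda_j \geq 1$, so $\prod_{j\in J}\lambda_j^{-1}\leq 1$. Since $p \in M$ is arbitrary, this proves the observation. There is no genuine obstacle; the only care needed is bookkeeping of the normalization constants in the two volume forms and the two norms, together with the key cancellation: the factor $\prod_j\lambda_j$ from $dV_{g_2}$ exactly cancels the factor $\prod_j\lambda_j^{-1}$ produced by the full holomorphic $(m,0)$-part of $T$, leaving only the $(0,q)$-part, which contributes factors $\leq 1$. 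This is precisely why the hypothesis that $T$ has bidegree $(m,q)$ is essential — the statement would fail for $(p,q)$-forms with $p<m$.
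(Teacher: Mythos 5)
Your proof is correct. The paper gives no argument of its own for this observation---it simply cites~\cite[Lemma 6.4]{bern2010}---and your pointwise simultaneous-diagonalization computation is essentially that standard proof: the factor $\prod_j\lambda_j$ from the ratio of volume forms cancels against the factor $\prod_j\lambda_j^{-1}$ coming from the full $(m,0)$-part of $T$, leaving only the factors $\prod_{j\in J}\lambda_j^{-1}\leq 1$ from the $(0,q)$-part, which is exactly where the hypothesis $g_1\leq g_2$ and the bidegree $(m,q)$ enter.
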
 

\begin{proof}[Proof of Lemma~\ref{thm:extension}]

Fix $m,n \in \Nb$ and $s \in (0,1)$. Also fix $\chi : \Bb^m \rightarrow [0,1]$, a compactly supported smooth function with $\chi \equiv 1$ on a neighborhood of $0$. 

Suppose $M$, $\Phi$, $\phi$, and $f$ satisfy the hypothesis of the theorem. Since $M$ is Stein, there exists a complete K\"ahler metric $g_0$ on $M$ and there exists a strictly plurisubharmonic function $\phi_0 : M \rightarrow [0,1]$. By scaling $g_0$ we can assume that 
$$ 
\Levi(\phi) > g_0
$$
on $\Phi(\supp(\chi))$. Then consider the K\"ahler metric 
$$
g := \Levi(\phi) + g_0.
$$

Let 
\begin{align*}
T & := \bar{\partial}(\Phi^{-1})^*\left( \chi f dw^1 \wedge \dots \wedge dw^m \right) =(\Phi^{-1})^*\bar{\partial}\left( \chi f dw^1 \wedge \dots \wedge dw^m \right) \\
& = (\Phi^{-1})^*\left( f \sum_{j=1}^m \frac{\partial \chi}{\partial w^j} d\bar{w}^j \wedge  dw^1 \wedge \dots \wedge dw^m \right).
\end{align*}
We will apply Theorem~\ref{thm:existence} to $\alpha$ with the metric $g$ and weights $\lambda_1 := \phi+\phi_0$, $\lambda_2 := 2(m+n)\Gf_M(\cdot, \Phi(0))$. Notice that 
$$
\Levi(\lambda_1) \geq \frac{1}{2} g
$$
on $\Phi(\supp(\chi))$.

Since $\Phi^* g \geq s^2 g_{\Euc}$, Observation~\ref{obs:diff kahler and L2norms} implies that 
\begin{align*}
\int_M \norm{T}_g^2 e^{-(\lambda_1+\lambda_2)} dV_g & =\int_{\Bb^m} \norm{\Phi^*T}_{\Phi^*g}^2 e^{-(\lambda_1+\lambda_2)\circ \Phi} dV_{\Phi^*g}  \\
&  \leq\int_{\Bb^m} \norm{\Phi^*T}_{s^2 g_{\Euc}}^2 e^{-(\lambda_1+\lambda_2)\circ \Phi}dV_{s^2 g_{\Euc}}  \\
& = \frac{1}{s^2}\int_{\Bb^m} \abs{ f}^2 \norm{ \bar{\partial}\chi}^2e^{-(\lambda_1+\lambda_2)\circ \Phi}d \Leb.
\end{align*}
Then, since $\bar{\partial}\chi \equiv 0$ on a neighborhood of $0$, Theorem~\ref{thm:green_fcn_bds} implies that there exists $C_1 > 0$ (which only depends on $m$, $n$, $s$, and $\chi$) such that 
\begin{align*}
\int_M \norm{T}_g^2 e^{-(\lambda_1+\lambda_2)} dV_g \leq C \int_{\Bb^m}\abs{ f}^2d \Leb. 
\end{align*}
By Theorem~\ref{thm:existence} there exist $C_1>0$ (which only depends on $m$, $n$, $s$, and $\chi$) and a smooth $(m,0)$-form $F_0$ such that $\bar{\partial}F_0 = T$ and 
\begin{align*}
\frac{i^{m^2}}{2^m} & \int_M e^{-(\lambda_1+\lambda_2)}\left(F_0 \wedge \bar{F_0}\right) \leq  C_1  \int_{\Bb^m} \abs{f}^2 d\Leb. 
\end{align*}
Let $\hat F_0 : \Bb^m \rightarrow \Cb$ be the function satisfying 
$$
\Phi^* F_0 = \hat F_0 dw^1 \wedge \cdots \wedge dw^m. 
$$
Then by Equation~\eqref{eqn:bound on L2 norm}, 
\begin{align*}
\int_{\Bb^m} \abs{\hat F_0}^2  e^{-\lambda_2 \circ \Phi} d\Leb \leq \frac{i^{m^2}}{2^m} \int_{M} e^{-\lambda_2} \left(F_0 \wedge \bar F_0 \right)<+\infty
\end{align*}
Further,  Theorem~\ref{thm:green_fcn_bds} implies that 
\begin{align*}
e^{-\lambda_2 \circ \Phi} = {\rm O}\left(\norm{z}^{-2(m+n)}\right).
\end{align*}
Thus, we must have 
\begin{align}\label{eqn:derivatives of F0}
\frac{\partial^{\abs{\alpha}} \hat F_0}{\partial w^{\alpha}}(0) =0 
\end{align}
 for all multi-indices $\alpha$ with $\abs{\alpha} \leq n$. 

Let 
\begin{align*}
F := (\Phi^{-1})^*(\chi f dw^1 \wedge \dots \wedge dw^m) - F_0.
\end{align*}
Then $\bar{\partial} F \equiv 0$ and so $F$ is holomorphic. Further, if $\Phi^* F = \hat{F} dw^1 \wedge \dots \wedge dw^m$, then Equation~\eqref{eqn:derivatives of F0} implies that 
\begin{align*}
\frac{\partial^{\abs{\alpha}} \hat{F}}{\partial w^{\alpha}}(0) = \frac{\partial^{\abs{\alpha}} (\chi f)}{\partial w^{\alpha}}(0) = \frac{\partial^{\abs{\alpha}} f}{\partial w^{\alpha}}(0) 
\end{align*}
for all multi-indices $\alpha$ with $\abs{\alpha} \leq n$. Finally, note that
\begin{align*}
\norm{F}_{\Hc(M)} &  \leq \norm{ (\Phi^{-1})^*(\chi f dw^1 \wedge \dots \wedge dw^m)}_{\Hc(M)} + \norm{F_0}_{\Hc(M)} \\
& \leq \norm{ f dw^1 \wedge \dots \wedge dw^m}_{\Hc(\Bb^m)} + \norm{F_0}_{\Hc(M)} \\
& \leq (1+\sqrt{C_1})  \left( \int_{\Bb^m} \abs{f}^2 d\Leb\right)^{1/2}
\end{align*}
and so the proof is complete.

\end{proof}


\section{The proofs of Theorems~\ref{thm:wHHR_comparision in intro} and~\ref{thm:wsq_comparison}}


In this section we prove Theorems~\ref{thm:wHHR_comparision in intro} and~\ref{thm:wsq_comparison} from the introduction. 

\begin{theorem}\label{thm:bergman_metric_versus_kob}  For every $s \in (0,1]$ and $m \in \Nb$ there exist constants $C> 1$, $\iota> 0$, and $(A_q)_{q \geq 0}$ such that: if $M$ is a Stein $m$-manifold, $z_0 \in M$, and $\wsq_M(z_0) \geq s$, then 
\begin{enumerate}
\item $\frac{1}{C} k_M(z_0;X) \leq \sqrt{ g_M(X,\bar X)} \leq C k_M(z_0;X)$ for all $X \in T_{z_0}^{(1,0)} M \simeq T_{z_0} M$. 
\item The injectivity radius of $g_M^B$ at $z_0$ is at least $\iota$,
\item $\norm{\nabla^q R|_{z_0}}_g \leq A_q$ where $R$ is the curvature tensor of $g_M$. 
\end{enumerate}
\end{theorem}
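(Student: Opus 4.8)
The plan is to transport the problem into the holomorphic chart supplied by the hypothesis $\wsq_M(z_0)\geq s$ and to show that this chart is a bounded geometry chart for the Bergman metric. Fix $s':=s/2\in(0,1)$; by Definition~\ref{defn:weak_squeezing_function} there exist a holomorphic embedding $\Phi:\Bb^m\to M$ and a $\Cc^2$ function $\phi:M\to[0,1]$ with $\Phi(0)=z_0$, $\phi(z_0)=0$, $\log\phi$ plurisubharmonic (so $\phi=e^{\log\phi}$ is itself plurisubharmonic), and $\Levi(\phi\circ\Phi)\geq (s')^2 g_{\Euc}$ on $\Bb^m$. Since $M$ is Stein, Theorems~\ref{thm:green_fcn_bds}, \ref{thm:comp_to_kob}, and~\ref{thm:lower bounds on Bergman stuff} all apply to the data $(\Phi,\phi)$ with parameter $s'$, and the constants they produce depend only on $m$ and $s$.

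The crucial step is to upgrade the pointwise lower bounds of Theorem~\ref{thm:lower bounds on Bergman stuff} to bounds valid on a fixed neighborhood of $0$. Given $w_1\in\tfrac12\Bb^m$, set $\Phi_1(w):=\Phi(w_1+\tfrac14 w)$; since $w_1+\tfrac14\Bb^m\subset\Bb^m$ this is again a holomorphic embedding $\Bb^m\to M$, and $\Levi(\phi\circ\Phi_1)\geq\tfrac1{16}(s')^2 g_{\Euc}$ on $\Bb^m$. Applying Theorem~\ref{thm:lower bounds on Bergman stuff} to $(\Phi_1,\phi)$ and using that $\Bfd_M$ and $g_M$ are intrinsic to $M$, so that pulling them back by the affine map $w\mapsto w_1+\tfrac14 w$ only introduces fixed powers of $\tfrac14$, yields constants $\epsilon'=\epsilon'(m,s)>0$ and $c=c(m,s)>0$ with $\hat\Bfd(w_1)\geq\epsilon'$ and $\Phi^*g_M|_{w_1}\geq c\,g_{\Euc}$ for all $w_1\in\tfrac12\Bb^m$. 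On the other hand, Proposition~\ref{prop:upper bounds on Bergman stuff} bounds $\hat\Bfd$ (which on the diagonal equals the $\beta$ of that proposition) together with all of its derivatives from above on $\tfrac12\Bb^m$ by constants depending only on $m$. Hence on $\tfrac14\Bb^m$ the function $\log\hat\Bfd$ is smooth with all derivatives bounded in terms of $m$ and $s$, and so $h:=\Phi^*g_M=\partial\bar\partial\log\hat\Bfd$ is a K\"ahler metric satisfying $c\,g_{\Euc}\leq h\leq C\,g_{\Euc}$ on $\tfrac14\Bb^m$ with all coordinate derivatives of its coefficients bounded, all constants depending only on $m$ and $s$. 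In particular $\Bfd_M$ is non-vanishing on $\Phi(\tfrac14\Bb^m)$ and $\Phi$ restricts to an isometry of $(\tfrac14\Bb^m,h)$ onto a neighborhood of $z_0$ in $(M,g_M)$.

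Part~(1) now follows by combining this chart with the Kobayashi estimates. Evaluating Theorem~\ref{thm:comp_to_kob} (with $r=\tfrac14$) at $w=0$ gives a constant $C_1=C_1(m,s)>1$ with $\tfrac1{C_1}\norm{X}\leq k_M(z_0;d\Phi_0 X)\leq C_1\norm{X}$, while $c\,g_{\Euc}\leq h\leq C\,g_{\Euc}$ gives $\sqrt c\,\norm{X}\leq\sqrt{g_M(d\Phi_0 X,d\Phi_0\bar X)}\leq\sqrt C\,\norm{X}$; since $d\Phi_0$ is a linear isomorphism onto $T_{z_0}M$, these two chains combine to the claimed biLipschitz comparison with a constant depending only on $m$ and $s$. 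For parts~(2) and~(3) one invokes standard Riemannian geometry applied to the bounded geometry chart: since $\Phi$ is an isometry of $(\tfrac14\Bb^m,h)$ onto a neighborhood of $z_0$, the injectivity radius of $g_M$ at $z_0$ is at least $\min\{\mathrm{inj}_h(0),\tfrac14\sqrt c\}$, and $\mathrm{inj}_h(0)$ is bounded below by a positive constant depending only on $m$ and $s$ because $h$ is a metric on a Euclidean ball with two-sided bounds and uniformly bounded derivatives, hence uniformly bounded Christoffel symbols and curvature; likewise the curvature tensor $R$ of $g_M$ and its covariant derivatives $\nabla^q R$ are, in the chart, universal polynomial expressions in the $h_{j\bar k}$, the entries of $h^{-1}$, the Christoffel symbols, and the coordinate derivatives of $h$ up to order $q+2$, all of which are bounded on $\tfrac14\Bb^m$ in terms of $m$ and $s$, so $\norm{\nabla^q R|_{z_0}}_g\leq A_q(m,s)$.

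The main obstacle is the neighborhood upgrade in the second paragraph: Theorem~\ref{thm:lower bounds on Bergman stuff} only controls the Bergman kernel and metric at the \emph{center} of the chart, and one must check that the affine rescaling genuinely transfers these lower bounds to nearby points with constants still depending only on $m$ and $s$, keeping careful track of the Jacobian factors in the transformation law for $\Bfd_M$. Once the bounded geometry chart is in place, the remaining assertions are routine assemblies of results already established.
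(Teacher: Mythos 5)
Your proposal is correct and follows essentially the same route as the paper: transport to the chart $\Phi$, use an affine rescaling $w\mapsto w_1+c\,w$ to bootstrap the pointwise lower bounds of Theorem~\ref{thm:lower bounds on Bergman stuff} into uniform lower bounds on a fixed sub-ball, combine with the upper bounds in Proposition~\ref{prop:upper bounds on Bergman stuff}, then conclude part (1) via Theorem~\ref{thm:comp_to_kob} and parts (2)--(3) from the resulting two-sided control of $\Phi^*g_M$ and its derivatives. The only cosmetic differences are the choice of rescaling factor ($1/4$ versus the paper's $1/2$) and the fact that you appeal to ``standard Riemannian geometry'' for the injectivity-radius lower bound where the paper cites~\cite[Proposition 2.1]{LSY2004b}; your version is also slightly more careful than the paper's in replacing $s$ by $s/2$ to avoid the issue of whether the supremum in Definition~\ref{defn:weak_squeezing_function} is attained.
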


Delaying the proof for a moment, we state one definition and one corollary. 

\begin{definition}\label{defn:bounded geometry} A complete Riemannian manifold $(M,g)$ has \emph{bounded geometry} if the injectivity radius of $(M,g)$ is positive  and for every $q \geq 0$ we have 
$$
\sup_M \norm{\nabla^q R}_g < + \infty,
$$
 where $R$ is the curvature tensor of $g$. 
 \end{definition}
 
Since the Kobayashi metric induces a Cauchy complete distance on a \textsf{wHHR} manifold (see Corollary~\ref{cor:kob is cc}), we have the following corollary to Theorem~\ref{thm:bergman_metric_versus_kob}.

\begin{corollary}\label{thm:wHHR_comparision} If $M$ is a \textsf{wHHR} Stein manifold, then:
\begin{enumerate}
\item The Bergman metric on $M$ is a complete K\"ahler metric with bounded geometry. 
\item The Kobayashi and Bergman metrics on $M$ are uniformly biLipschitz. 
\end{enumerate}
\end{corollary}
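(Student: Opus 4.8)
The plan is to obtain the corollary as a purely formal consequence of the pointwise Theorem~\ref{thm:bergman_metric_versus_kob} and the Cauchy completeness of the Kobayashi distance (Corollary~\ref{cor:kob is cc}); no new estimates are needed, only a packaging argument.

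\emph{Step 1 (Uniformizing the hypothesis).} Since $M$ is a \textsf{wHHR} manifold, Proposition~\ref{prop:wsq_basic in paper 1} supplies $s \in (0,1)$ with $\wsq_M(\zeta) \geq s$ for every $\zeta \in M$; write $m = \dim M$. Moreover, by Definition~\ref{defn:wHHR}, every $\zeta$ admits a holomorphic embedding $\Phi : \Bb^m \rightarrow M$ and a $\Cc^2$ plurisubharmonic $\phi : M \rightarrow [0,1]$ with $\Phi(0)=\zeta$ and $\Levi(\phi\circ\Phi) \geq s^2 g_{\Euc}$ on $\Bb^m$; feeding this data into part (2) of Theorem~\ref{thm:lower bounds on Bergman stuff} shows $\hat\Bfd(0) > 0$, so $\Bfd_M$ does not vanish at $\zeta$. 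As $\zeta$ is arbitrary, $\Bfd_M$ is non-vanishing on all of $M$ and the Bergman metric $g_M$ is defined everywhere; it is Kähler, being locally the Levi form of $\log$ of the diagonal Bergman kernel as recalled above. Applying Theorem~\ref{thm:bergman_metric_versus_kob} with this $s$ and $m$ produces constants $C>1$, $\iota>0$, $(A_q)_{q\geq 0}$ so that its conclusions (1)--(3) hold simultaneously at every $z_0 \in M$. In particular, since $M$ is Kobayashi hyperbolic (Corollary~\ref{cor:kob is cc}), conclusion (1) gives $\sqrt{g_M(X,\bar X)} \geq \frac1C k_M(z_0;X) > 0$ for $X \neq 0$, so $g_M$ is a genuine positive-definite Hermitian (hence Riemannian) metric.

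\emph{Step 2 (biLipschitz equivalence — part (2)).} Conclusion (1) of Theorem~\ref{thm:bergman_metric_versus_kob}, now valid at every point with the single constant $C$, reads $\frac1C k_M(z_0;X) \leq \norm{X}_{g_M} \leq C\, k_M(z_0;X)$ for all $z_0 \in M$ and $X \in T_{z_0}M$. Integrating this chain of inequalities along piecewise $\Cc^1$ curves and taking infima over curves joining two fixed points yields $\frac1C \dist_M^K(z_1,z_2) \leq \dist_M^B(z_1,z_2) \leq C\,\dist_M^K(z_1,z_2)$ for all $z_1,z_2 \in M$, where $\dist_M^B$ is the Riemannian distance of $g_M$; that is, the Kobayashi and Bergman distances are uniformly biLipschitz. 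The only thing being used here is that both distances are length distances computed from the same class of curves, which is built into the definition of $\dist_M^K$ and of the Riemannian distance of $g_M$.

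\emph{Step 3 (completeness and bounded geometry — part (1)).} By Corollary~\ref{cor:kob is cc}, $(M,\dist_M^K)$ is Cauchy complete; by the biLipschitz equivalence of Step 2, $(M,\dist_M^B)$ is Cauchy complete as well. Since $\dist_M^B$ is the Riemannian distance of the Kähler metric $g_M$, metric completeness is equivalent to geodesic completeness by Hopf--Rinow, so $(M,g_M)$ is a complete Kähler manifold. For bounded geometry, conclusions (2) and (3) of Theorem~\ref{thm:bergman_metric_versus_kob} give, at every $z_0 \in M$, injectivity radius at least $\iota$ and $\norm{\nabla^q R|_{z_0}}_{g_M} \leq A_q$ for each $q$; taking the supremum over $z_0 \in M$ shows the injectivity radius of $(M,g_M)$ is at least $\iota > 0$ and $\sup_M \norm{\nabla^q R}_{g_M} \leq A_q < \infty$ for every $q$, which is exactly Definition~\ref{defn:bounded geometry}.

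\emph{Main obstacle.} At this stage there is essentially no obstacle: all the analytic content sits in Theorem~\ref{thm:bergman_metric_versus_kob} and Corollary~\ref{cor:kob is cc}, and the corollary is bookkeeping. The only points needing a word of care are the two flagged above: that $\Bfd_M$ is globally non-vanishing, so that "the Bergman metric'' is meaningful on all of $M$ (handled by applying Theorem~\ref{thm:lower bounds on Bergman stuff} at each point), and the passage from the infinitesimal comparison of $k_M$ and $g_M$ to the comparison of the integrated distances, which is routine since both are length distances for the same curve class.
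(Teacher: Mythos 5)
Your proposal is correct and follows essentially the same route the paper intends: the paper derives this corollary immediately from Theorem~\ref{thm:bergman_metric_versus_kob} applied with the uniform lower bound on $\wsq_M$ together with the Cauchy completeness from Corollary~\ref{cor:kob is cc}, and your three steps simply make that bookkeeping explicit (non-vanishing of the kernel, integration of the infinitesimal comparison, Hopf--Rinow plus the uniform injectivity-radius and curvature bounds).
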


\begin{proof}[Proof of Theorem~\ref{thm:bergman_metric_versus_kob}] Suppose $M$ is a Stein $m$-manifold and fix $z_0 \in M$ with $\wsq_M(z_0) \geq s$. By definition exists a holomorphic embedding $\Phi: \Bb^m \rightarrow M$ and a $\Cc^2$ plurisubharmonic function $\phi : M \rightarrow [0,1]$ such that $\Phi(0)=z_0$ and 
$$
\Levi( \phi \circ \Phi) \geq s^2 g_{\rm Euc}. 
$$

As in Section~\ref{sec:bounding bergman in local coordinates}, define $\beta :\Bb^m \times \Bb^m \rightarrow \Cb$ by
\begin{align*}
\beta(u,w) du^1 \wedge \dots \wedge du^m \wedge d\bar{w}^1 \wedge \dots \wedge d\bar{w}^m = (\Phi \times \Phi)^* \Bf_M.
\end{align*}
Then by definition, 
\begin{align*}
\Phi^* g_M = \sum_{1 \leq j,k \leq m} \frac{\partial^2 \log \beta(w,w)}{\partial w^j \partial \bar{w}^k}dw^j \otimes d\bar{w}^k.
\end{align*}

For $u \in \frac{1}{2} \Bb^m$, define $\Phi_u : \Bb^m \rightarrow M$ by $\Phi_u(w) =\Phi\left(u + \frac{1}{2} w\right)$. Then 
$$
\Levi( \phi \circ \Phi_u) \geq \left(\frac{s}{2}\right)^2 g_{\rm Euc}.
$$
So by Theorem~\ref{thm:lower bounds on Bergman stuff} there exists $\epsilon > 0$, which depends only on $s$ and $m$, such that 
\begin{equation}\label{eqn:lower bound on beta in some proof}
\beta(u,u) \geq \epsilon
\end{equation} 
for all $u \in \frac{1}{2}\Bb^m$ and 
$$
\epsilon g_{\rm Euc} \leq \Phi^*g_M
$$
on $ \frac{1}{2}\Bb^m$. 

Part (1) of Theorem~\ref{thm:bergman_metric_versus_kob} is a consequence of Theorem~\ref{thm:comp_to_kob} and the following lemma. 

\begin{lemma}\label{lem:comparable} There exists $C_1 > 1$, which depends only on $s$ and $m$, such that 
\begin{align*}
\frac{1}{C_1} g_{\rm Euc} \leq \Phi^*g_M \leq C_1 g_{\rm Euc}
\end{align*}
on $\frac{1}{2} \Bb$. 
\end{lemma}

\begin{proof} By Equation~\eqref{eqn:lower bound on beta in some proof}, the definition of $g_M$, and Proposition~\ref{prop:upper bounds on Bergman stuff} there exists $C_0 > 0$, which depends only on $s$ and $m$, such that 
\begin{align*}
 \Phi^*g_M \leq C_0 g_{\rm Euc}
\end{align*}
on $\frac{1}{2} \Bb$. Then let $C_1 : = \max\{ C_0, \epsilon^{-1}\}$. 
\end{proof}

\begin{lemma}\label{lem:curvature_bds} There exist constants $(A_q)_{q \geq 0}$, which depend only on $s$ and $m$, such that 
\begin{align*}
\sup_{z \in \Phi(\frac{1}{2} \Bb^m)} \norm{\nabla^m R}_{g_M}  \leq A_q. 
\end{align*}
\end{lemma}

\begin{proof}
This follows from Proposition~\ref{prop:upper bounds on Bergman stuff}, Lemma~\ref{lem:comparable}, and  expressing the curvature tensors in the  local coordinates induced by $\Phi$. 
\end{proof}

\begin{lemma} There exists $\iota > 0$, which depend only on $s$ and $m$, such that the injectivity radius of $g_M$ at $z_0$ is at least $\iota$.
\end{lemma}

\begin{proof} This follows immediately from~\cite[Proposition 2.1]{LSY2004b} and Lemmas~\ref{lem:comparable} and~\ref{lem:curvature_bds}.
\end{proof}

\end{proof} 


\section{Compactness of the $\bar{\partial}$-Neumann operator}\label{sec:compactness_of_dbar}


In this section we prove Theorem~\ref{thm:necessary} from the introduction. 

\begin{theorem}\label{thm:necessary in paper} Suppose that $\Omega \subset \Cb^m$ is a bounded \textsf{wHHR} domain with  $\Cc^0$ boundary. If $\partial \Omega$ contains a $q$-dimensional analytic variety, then $N_q: L^2_{(0,q)}(\Omega) \rightarrow L^2_{(0,q)}(\Omega)$ is not compact.
\end{theorem}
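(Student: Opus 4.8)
The plan is to follow the strategy of~\cite{Z2021} (referenced after the theorem statement), which characterizes compactness of $N_q$ via a compactness estimate and then obstructs that estimate using the analytic variety in the boundary together with the \textsf{wHHR} structure. Recall that $N_q$ is compact if and only if the embedding of the form domain of $\square$ into $L^2_{(0,q)}(\Omega)$ is compact, equivalently if and only if the following compactness estimate holds: for every $\varepsilon > 0$ there is a constant $C_\varepsilon$ with
\begin{align*}
\norm{u}^2 \leq \varepsilon \left( \norm{\bar\partial u}^2 + \norm{\bar\partial^* u}^2 \right) + C_\varepsilon \norm{u}_{-1}^2
\end{align*}
for all $u$ in the domain of $\bar\partial$ and $\bar\partial^*$. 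So it suffices to produce a sequence $(u_k)$ of $(0,q)$-forms in that common domain with $\norm{u_k} = 1$, with $u_k \to 0$ weakly (hence $\norm{u_k}_{-1} \to 0$, using that $L^2 \hookrightarrow H^{-1}$ is compact for bounded $\Omega$), but with $\norm{\bar\partial u_k}^2 + \norm{\bar\partial^* u_k}^2$ bounded. Such a sequence defeats the estimate for $\varepsilon$ small.

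The key steps, in order: (i) Let $\psi : \Bb^q \to \Cb^m$ be the holomorphic map with $\psi(\Bb^q) \subset \partial\Omega$ and $\psi'(0)$ of rank $q$. Pick a sequence of points $\zeta_k \in \Omega$ converging to a point $\psi(a)$ in the relative interior of the variety, with the $\zeta_k$ approaching $\partial\Omega$ essentially along (a translate of) the variety. (ii) Apply the \textsf{wHHR} hypothesis at each $\zeta_k$: there is a holomorphic embedding $\Phi_k : \Bb^m \to \Omega$ with $\Phi_k(0) = \zeta_k$ and a plurisubharmonic $\phi_k : \Omega \to [0,1]$ with $\Levi(\phi_k \circ \Phi_k) \geq s^2 g_{\mathrm{Euc}}$. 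The role of $\Phi_k$ is to give uniformly comparable coordinates near $\zeta_k$; the role of the variety is that the images $\Phi_k(r\Bb^m)$ can be arranged to "spread along" the boundary variety so that, after rescaling, they converge (on compacta) to an unbounded domain containing lines in the $q$ variety directions — this is where tautness/pseudoconvexity from Corollary~\ref{cor:pseudoconvex} and the Kobayashi estimates of Theorem~\ref{thm:comp_to_kob} are used to control the geometry. (iii) On $\Phi_k(\Bb^m)$, build a normalized $(0,q)$-form $u_k$ supported near $\zeta_k$ using the $\bar\partial$-closed $(0,q)$-forms naturally attached to the $q$ boundary directions (e.g. $\bar\partial$ of a cutoff times $\bar w^1 \wedge \cdots \wedge \bar w^q$–type data in the $\Phi_k$-coordinates), normalized in $L^2$; arrange $u_k \rightharpoonup 0$ because the supports escape to the boundary. (iv) Estimate $\norm{\bar\partial u_k}^2 + \norm{\bar\partial^* u_k}^2$: the point is that in the $q$ "flat" directions the forms cost nothing, while the plurisubharmonic function $\phi_k$ with large Levi form controls the $\bar\partial^*$ contribution via the Morrey–Kohn–Hörmander inequality, keeping the Dirichlet energy of $u_k$ uniformly bounded even as $\norm{u_k} = 1$.

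The main obstacle is step (iv) combined with the rescaling in (ii): one must show the energy stays bounded, which requires that the rescaled limit domain genuinely contains a $q$-dimensional family of complex lines along which $\bar\partial$ and $\bar\partial^*$ both annihilate the limiting form. This is exactly the mechanism by which a $q$-dimensional boundary variety blocks the subelliptic gain needed for compactness of $N_q$, and it is the technical heart of~\cite{Z2021}; here the \textsf{wHHR} hypothesis replaces the bounded-geometry hypothesis used there, with Theorem~\ref{thm:comp_to_kob} and Corollary~\ref{cor:pseudoconvex} supplying the needed uniform control. A secondary subtlety is verifying $u_k \to 0$ weakly in the presence of the weak derivative of the cutoff, which is handled by the same support–escaping argument plus the uniform lower bounds on $\Bfd$ and the Kobayashi metric from Theorem~\ref{thm:lower bounds on Bergman stuff} and Theorem~\ref{thm:comp_to_kob}.
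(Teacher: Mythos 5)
There is a genuine gap. Your plan is to refute the compactness estimate directly by exhibiting a test family $(u_k)$ with $\norm{u_k}=1$, $u_k\rightharpoonup 0$, and $\norm{\bar\partial u_k}^2+\norm{\bar\partial^* u_k}^2$ bounded, but you never actually construct $u_k$: you gesture at a cutoff of $d\bar w^1\wedge\cdots\wedge d\bar w^q$ in the $\Phi_k$-coordinates and a rescaling limit containing a $q$-parameter family of lines, and leave the boundedness of the Dirichlet energy as ``the technical heart.'' Worse, the mechanism you invoke to control that energy runs in the wrong direction. The plurisubharmonic function $\phi_k$ with $\Levi(\phi_k\circ\Phi_k)\geq s^2 g_{\Euc}$ enters the Morrey--Kohn--H\"ormander inequality as a \emph{lower} bound on $\norm{\bar\partial u}^2+\norm{\bar\partial^* u}^2$, so a large Levi form can only make the Dirichlet energy large, not small; indeed, globally bounded plurisubharmonic functions with arbitrarily large Levi form near the boundary is exactly Catlin's property $(P_q)$, which is a \emph{sufficient} condition for compactness of $N_q$. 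So the wHHR data cannot be what bounds $\norm{\bar\partial u_k}^2+\norm{\bar\partial^* u_k}^2$; if anything it pushes the opposite way, and the argument as outlined does not close.

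The paper's proof takes a dual and much more concrete route, the one actually used in~\cite{Z2021}. It assumes $N_q$ is compact, hence the canonical solution operator $S_q=\bar\partial^* N_q$ is compact. It then feeds into $S_q$ the normalized Bergman-kernel $(0,q)$-forms
\[
\alpha_n=\frac{\Bf_\Omega(\cdot,\zeta_n)}{\sqrt{\Bf_\Omega(\zeta_n,\zeta_n)}}\,d\bar z^1\wedge\cdots\wedge d\bar z^q,
\]
where $\zeta_n=x+t_n v\to x\in\psi(\Bb^q)\subset\partial\Omega$; these have unit $L^2$-norm and are automatically $\bar\partial$-closed, so no cutoff or energy estimate is needed. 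Compactness of $S_q$ forces $h_n:=S_q\alpha_n$ to converge in $L^2$, hence to be tight (no mass escaping to $\partial\Omega$). On the other hand, the wHHR embeddings $\Phi_n$ at $\zeta_n$, together with the Bergman kernel upper bounds (Proposition~\ref{prop:upper bounds on Bergman stuff}), the lower bounds (Theorem~\ref{thm:lower bounds on Bergman stuff}), and the Kobayashi estimate (Theorem~\ref{thm:comp_to_kob}) used to control $\Phi_n'(0)$ in the variety directions, show that the pullbacks $\wt\alpha_n$ converge locally uniformly to a nonzero limit; since $\bar\partial\Phi_n^* h_n=\Phi_n^*\alpha_n$, this forces a positive amount of $\norm{h_n}^2$ to sit on $\Phi_n(K)$ for a fixed compact $K\subset\Bb^m$, and $\Phi_n(K)$ escapes to $\partial\Omega$ because $\zeta_n\to\partial\Omega$ and $\Omega$ is pseudoconvex (Corollary~\ref{cor:pseudoconvex}). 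This contradicts tightness. Your proposal does not reach this argument and, as written, the key step fails.
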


The proof is very similar to the proof of Proposition 11.6 in ~\cite{Z2021}, which in turn is similar to arguments of Catlin~\cite[Section 2]{C1983} and Fu--Straube~\cite[Section 4]{FS1998}. 

\subsection{Some notation}  Given a holomorphic map $\Phi : \Omega_1 \rightarrow \Omega_2$ between open sets $\Omega_j \subset \Cb^{m_j}$, we let $\Phi'(z)$ denote the $m_2$-by-$m_1$ complex Jacobian matrix and let $\norm{\Phi'(z)}$ denote the operator norm (relative to the Euclidean norms on $\Cb^{m_1}, \Cb^{m_2}$).

Given a $(p,q)$-form $\alpha = \sum \alpha_{I,J} dz^I \wedge d\bar{z}^J$ on a domain $\Omega$, we will let $\norm{\alpha}$ denote the function 
\begin{align*}
z \in \Omega \mapsto \left( \sum \abs{\alpha_{I,J}(z)}^2 \right)^{1/2}.
\end{align*}
and let 
$$
\norm{\alpha}_\Omega = \left(\int_\Omega \norm{\alpha}^2 d\Leb\right)^{1/2}.
$$
Similarly, we will let $\ip{\cdot, \cdot}$ denote the pointwise inner product on $(p,q)$-forms, that is
\begin{align*}
\ip{ \sum \alpha_{I,J} dz^I \wedge d\bar{z}^J, \sum \beta_{I,J} dz^I \wedge d\bar{z}^J} = \sum \alpha_{I,J}\bar{\beta}_{I,J}.
\end{align*}
Notice that $\norm{\alpha} = \sqrt{ \ip{\alpha,\alpha}}$. 

\subsection{Proof of Theorem~\ref{thm:necessary in paper}} Suppose $\partial \Omega$ that contains a $q$-dimensional analytic variety and suppose for a contradiction that $N_q: L^2_{(0,q)}(\Omega) \rightarrow L^2_{(0,q)}(\Omega)$ is compact. 

Define $S_q : L^2_{(0,q)}(\Omega) \cap \ker \bar{\partial} \rightarrow L^2_{(0,q-1)}(\Omega)$ by $S_q : = \bar{\partial}^* N_q$. Then $S_q$ is a \emph{solution operator for $\bar{\partial}$}, i.e. $\bar{\partial}S_q(f) = f$ for all $f \in L^2_{(0,q)}(\Omega) \cap \ker \bar{\partial}$. Further, since $N_q$ is compact so is $S_q=\bar{\partial}^* N_q$, see~\cite[Lemma 1]{FS2001}.

By assumption there exists an holomorphic map $\psi : \Bb^q \rightarrow \Cb^m$ where $\psi'(0)$ has rank $q$ and $\psi(\Bb^q) \subset \partial\Omega$. By rotating $\Omega$ we can assume that 
$$
\psi'(0)\Cb^q= \Cb^{q} \times \{0_{\Cb^{m-q}}\}. 
$$

Let $x:=\psi(0)$. Since $\Omega$ has $\Cc^0$ boundary, we can assume that there exist $\epsilon > 0$, $\delta > 0$, and a unit vector $v \in \Cb^m$ such that 
$$
x+tv+\psi(\epsilon\Bb^q) \subset \Omega
$$
for all $t \in (0,\delta]$. 

Fix $\{t_n\} \subset (0,\delta]$ converging to 0 and let $\zeta_n: = x+ t_n v$. Since $\Omega$ is a \textsf{wHHR} domain, there is some $s > 0$ such that for every $n \geq 1$ there exist a holomorphic embedding $\Phi_n : \Bb^m \rightarrow \Omega$ and a $\Cc^2$ function $\phi_n: \Omega \rightarrow [0,1]$ such that $\Phi_n(0)=\zeta_n$ and 
$$
\Levi( \phi_n \circ \Phi_n) \geq s^2 g_{\rm Euc}. 
$$
Precomposing each $\Phi_n$ with a rotation, we can assume that 
$$
\Phi_n'(0)(\Cb^{q} \times\{0_{\Cb^{m-q}}\})= \Cb^{q} \times\{0_{\Cb^{m-q}}\}=\psi'(0) \Cb^q.
$$

\begin{lemma}\label{lem:bounds on Phi_n at 0} There exists $C>1$ such that: If $n \geq 1$ and $X \in \Cb^{q} \times\{0_{\Cb^{m-q}}\}$, then 
$$
\frac{1}{C} \norm{X} \leq \norm{\Phi_n'(0) X} \leq C \norm{X}.
$$
\end{lemma}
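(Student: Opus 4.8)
The plan is to deduce the two-sided bound on $\Phi_n'(0)$ restricted to the subspace $V := \Cb^q \times \{0_{\Cb^{m-q}}\}$ from the already-established comparison between the Euclidean and Kobayashi metrics (Theorem~\ref{thm:comp_to_kob}), using that the domains $\Phi_n(\Bb^m)$ all sit inside the fixed bounded domain $\Omega$ and that the images $\Phi_n(V \cap \Bb^m)$ contain a uniform ``analytic disk neighborhood'' of the boundary point $x$. First I would observe that, by the choice of $\epsilon, \delta, v$ and the fact that $\zeta_n = x + t_n v$ with $t_n \to 0$, the analytic polydisk-like set $\zeta_n + \psi(\epsilon \Bb^q) - \psi(0)$ (or more precisely a $\psi$-image through $\zeta_n$) lies in $\Omega$; composing with $\psi$ and using that $\psi'(0)$ has rank $q$ with image exactly $V$, this gives a holomorphic map from a fixed ball in $\Cb^q$ into $\Omega$ hitting $\zeta_n$ whose derivative is uniformly nondegenerate. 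Contracting to a fixed smaller ball, this produces an embedded $q$-disk through $\zeta_n$ inside $\Omega$, uniformly in $n$.

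For the \textbf{upper bound} $\norm{\Phi_n'(0)X} \le C\norm{X}$: since $\Phi_n(\Bb^m) \subset \Omega$ and $\Omega$ is bounded, the Cauchy estimates applied to each coordinate of $\Phi_n$ on $\Bb^m$ give a uniform bound on $\norm{\Phi_n'(0)}$ as an operator on all of $\Cb^m$, hence in particular on $V$. (No wHHR hypothesis is needed for this direction.)

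For the \textbf{lower bound} $\norm{\Phi_n'(0)X} \ge \frac{1}{C}\norm{X}$: this is where the wHHR structure enters. By Theorem~\ref{thm:comp_to_kob} applied with $r = 1/2$ (say) to the data $(\Omega, \Phi_n, \phi_n)$, there is a constant $C' = C'(1/2, s) > 1$, independent of $n$, with
\begin{align*}
\frac{1}{C'}\norm{Y} \le k_\Omega\bigl(\Phi_n(w); d(\Phi_n)_w Y\bigr) \le C'\norm{Y}
\end{align*}
for all $w \in \frac12 \Bb^m$ and $Y \in \Cb^m$; in particular at $w = 0$, $k_\Omega(\zeta_n; \Phi_n'(0)X) \le C'\norm{X}$. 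So it suffices to bound $k_\Omega(\zeta_n; \Phi_n'(0)X)$ from below by a uniform multiple of $\norm{X}$ when $X \in V$. For this I would use the uniform analytic $q$-disk through $\zeta_n$ constructed above: composing the wHHR function $\phi_n$ (or rather using the hypothesis $\Levi(\phi_n \circ \Phi_n) \ge s^2 g_{\mathrm{Euc}}$) or, more directly, invoking Theorem~\ref{thm:comp_to_kob} once more but now with a \emph{different} embedding — namely a holomorphic embedding $\Bb^m \to \Omega$ built from the $\psi$-disk thickened by $v$ together with the bounded function $\mathrm{dist}(\cdot,\partial\Omega)$-type plurisubharmonic weight — to get $k_\Omega(\zeta_n; X) \gtrsim \norm{X}$ for $X$ tangent to that disk, i.e. for $X \in \psi'(0)\Cb^q = V$. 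Since $\Phi_n'(0)$ maps $V$ to $V$ (by the rotation normalization), chasing these two estimates gives $\norm{X} \lesssim k_\Omega(\zeta_n; \Phi_n'(0)X) \lesssim \norm{\Phi_n'(0)X}$ with all constants independent of $n$.

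\textbf{Main obstacle.} The delicate point is the lower bound, specifically producing a \emph{uniform} (in $n$) lower bound on the Kobayashi length of vectors in $V$ at the points $\zeta_n$, which approach $\partial\Omega$. The naive bound from $\Omega \subset \mathrm{(large ball)}$ degenerates as $\zeta_n \to \partial\Omega$ in the direction transverse to $\partial\Omega$, but vectors in $V$ are (essentially) tangential to the boundary variety, so one needs the analytic-disk construction — the set $\zeta_n + \psi(\epsilon\Bb^q) \subset \Omega$ — to see that in these tangential directions the Kobayashi metric does \emph{not} blow up, while the wHHR hypothesis (via Theorem~\ref{thm:comp_to_kob}) supplies the matching lower bound. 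Care is needed to arrange that the relevant maps are genuine \emph{embeddings} of $\Bb^m$ (thickening the $q$-disk in the $v$-direction and $m-q-1$ further directions while staying inside $\Omega$), which is why the hypotheses fix a unit vector $v$ and an $\epsilon$-ball; I expect this bookkeeping, rather than any deep idea, to be the bulk of the work.
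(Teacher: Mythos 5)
Your overall strategy matches the paper's: Cauchy estimates for the easy upper bound, and for the lower bound a Kobayashi-metric sandwich of the form
\begin{align*}
\norm{X} \lesssim k_\Omega\bigl(\zeta_n;\, \Phi_n'(0)X\bigr) \lesssim \norm{\Phi_n'(0)X},
\end{align*}
with the left inequality supplied by Theorem~\ref{thm:comp_to_kob} and the right by the analytic $q$-disk through $\zeta_n$. That final chain is exactly right; however, the reasoning you give for it contains a genuine gap in two places.

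First, in the paragraph leading up to the chain you quote the \emph{upper} Kobayashi bound from Theorem~\ref{thm:comp_to_kob}, $k_\Omega(\zeta_n;\Phi_n'(0)X)\le C'\norm{X}$, and then say it remains to bound the same quantity \emph{from below} by a multiple of $\norm{X}$. That pairing is vacuous: it only yields $\norm{X}\lesssim\norm{X}$ and says nothing about $\norm{\Phi_n'(0)X}$. What is actually needed from Theorem~\ref{thm:comp_to_kob} is the \emph{lower} bound $k_\Omega(\zeta_n;\Phi_n'(0)X)\ge\tfrac{1}{C'}\norm{X}$, together with an \emph{upper} bound for $k_\Omega$ along $V$ in terms of $\norm{\Phi_n'(0)X}$ — which is what your display says, but not what the surrounding prose argues.

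Second, for that upper bound along $V$ you propose to thicken the $\psi$-disk into a holomorphic embedding of $\Bb^m$ into $\Omega$, equip it with a ``$\dist(\cdot,\partial\Omega)$-type'' plurisubharmonic weight, and apply Theorem~\ref{thm:comp_to_kob} once more, and you write that this produces $k_\Omega(\zeta_n;X)\gtrsim\norm{X}$ for $X\in V$. This is the wrong direction: the chain requires $k_\Omega(\zeta_n;X)\lesssim\norm{X}$, and the heavy machinery you invoke targets precisely the side of the estimate you do not need. It is also not clear the construction would go through: Theorem~\ref{thm:comp_to_kob} requires a psh weight with a uniform Levi-form lower bound for the chosen embedding, and a generic $\dist$-type function has no reason to supply this uniformly as $\zeta_n\to\partial\Omega$ — that is exactly the content of the wHHR hypothesis for the given $\Phi_n$, and it does not come for free for an ad hoc replacement. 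The fix is much simpler and is what the paper does: the required upper bound $k_\Omega(\zeta_n;\psi'(0)Y)\le\tfrac{1}{\epsilon}\norm{Y}$ follows immediately from the distance-decreasing property of the Kobayashi metric applied to the holomorphic map from $\Bb^q$ into $\Omega$ given by the $q$-disk through $\zeta_n$, with no embedding of $\Bb^m$, no weight, and no second invocation of Theorem~\ref{thm:comp_to_kob}. Since $\psi'(0)$ is injective with image $V$, $\norm{Y}\lesssim\norm{\psi'(0)Y}$; taking $Y$ with $\psi'(0)Y=\Phi_n'(0)X$ (possible because $\Phi_n'(0)$ preserves $V$) then gives $k_\Omega(\zeta_n;\Phi_n'(0)X)\lesssim\norm{\Phi_n'(0)X}$ and closes the chain.
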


\begin{proof} Since $\Omega$ is bounded, Cauchy's integral formulas imply that there exists $C_0 > 1$ such that 
$$
\norm{\Phi_n'(0) X} \leq C_0 \norm{X}.
$$
for all $n \geq 1$ and $X \in \Cb^m$. 

The other bound requires more work. Since $\psi'(0)$ is injective, there exists $\epsilon_1 > 0$ such that 
$$
\norm{\psi'(0)Y} \geq \epsilon_1\norm{Y}
$$
for all $Y \in \Cb^q$. By Theorem~\ref{thm:comp_to_kob}, there exists $\epsilon_2 > 0$ such that 
$$
k_\Omega(\zeta_n; \Phi_n'(0)X) \geq \epsilon_2\norm{X}
$$
for all $n \geq 1$ and $X \in \Cb^m$. 

Now fix $n \geq 1$ and $X \in \Cb^{q} \times\{0_{\Cb^{m-q}}\}$. Then there exists $Y \in \Cb^q$ with $\psi'(0)Y = \Phi_n'(0)X$. Since the Kobayashi metric is distance decreasing under holomorphic maps and 
$$
\zeta_n+\psi(\epsilon\Bb^q) \subset \Omega,
$$
we have 
$$
k_\Omega(\zeta_n;  \Phi_n'(0)X) = k_\Omega(\zeta_n;  \psi'(0)Y) \leq \frac{1}{\epsilon} \norm{Y}. 
$$ 
Thus 
$$
\norm{X} \leq \frac{1}{\epsilon\epsilon_2} \norm{Y} \leq \frac{1}{\epsilon \epsilon_1 \epsilon_2} \norm{\psi'(0)Y} = \frac{1}{\epsilon \epsilon_1 \epsilon_2} \norm{\Phi_n'(0)X}. 
$$

So $C : = \max\{ C_0, (\epsilon \epsilon_1 \epsilon_2)^{-1}\}$ suffices.  

\end{proof}

Consider the $(0,q)$-forms 
\begin{align*}
\alpha_n := \frac{\Bf_\Omega(\cdot, \zeta_n)}{\sqrt{\Bf_\Omega(\zeta_n, \zeta_n)}}d\bar{z}^1 \wedge \dots \wedge d\bar{z}^q
\end{align*}
on $\Omega$. Then $\norm{\alpha_n}_\Omega = 1$ and $\bar{\partial} \alpha_n =0$. Let $h_n := S_q(\alpha_n)$. Since $S_q$ is compact, after passing to a subsequence we can suppose that $h_n$ converges in $L^2_{(0,q-1)}(\Omega)$. Since $h_n$ converges,  for any $\epsilon > 0$ there exists a compact subset $K \subset \Omega$ such that 
\begin{align}
\label{eq:unif_estimate}
\sup_{n \geq 1} \int_{\Omega \setminus K} \norm{h_n}^2 d\Leb < \epsilon.
\end{align}

Define $\wt{\alpha}_n: \Bb^m \rightarrow \Cb$ by 
\begin{align*}
\wt{\alpha}_n :=  \det\left( \Phi_n^\prime(w) \right)\ip{ \Phi_n^* \alpha_n, d\bar{w}^1 \wedge \dots \wedge d\bar{w}^q}.
\end{align*}

\begin{lemma}\label{lem:alphas converge} After passing to a subsequence, we can assume that $\wt{\alpha}_n$ converges locally uniformly on $\Bb^m$ to a smooth function $\wt{\alpha}$ and $\wt{\alpha}(0) \neq 0$.
\end{lemma}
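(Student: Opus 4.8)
The plan is to show the sequence $\wt{\alpha}_n$ is uniformly bounded on compact subsets of $\Bb^m$ (so that a normal-families argument yields a locally uniformly convergent subsequence) and that the limit is nonzero at the origin by tracking the value $\wt{\alpha}_n(0)$ explicitly. The key point is that $\wt{\alpha}_n$ is, up to the Jacobian factor $\det(\Phi_n'(w))$, the pullback of the Bergman-kernel coefficient of $\alpha_n$, and the normalization $\norm{\alpha_n}_\Omega = 1$ together with the lower bounds already established on the Bergman kernel and metric will pin down its size near $0$.

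First I would unwind the definition: if we write $\Phi_n^* \Bf_\Omega$ in local coordinates as in Section~\ref{sec:bounding bergman in local coordinates}, then $\wt{\alpha}_n(w)$ equals $\frac{1}{\sqrt{\Bf_\Omega(\zeta_n,\zeta_n)}}$ times $\det(\Phi_n'(w))\,\Bf_\Omega(\Phi_n(w), \zeta_n)$, and since $\Bf_\Omega(\cdot,\zeta_n)\,dz^1\wedge\cdots\wedge dz^m$ is (a multiple of) the reproducing form, $\Phi_n^*\big(\Bf_\Omega(\cdot,\zeta_n) dz^1\wedge\cdots\wedge dz^m\big)$ is the coefficient function $h_{n,\zeta_n}(w)$ appearing when one expands the diagonal-pulled-back Bergman kernel $\beta_n(u,w)$ from Proposition~\ref{prop:upper bounds on Bergman stuff}. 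Concretely, $\wt{\alpha}_n(w) = \beta_n(w,0)/\sqrt{\beta_n(0,0)}$ where $\beta_n = (\Phi_n\times\Phi_n)^*\Bf_\Omega$. This identification is the crux; once it is in place, Proposition~\ref{prop:upper bounds on Bergman stuff} gives a uniform bound $\abs{\beta_n(w,0)} \le C_{0,0}$ for $w \in r\Bb^m$, and Theorem~\ref{thm:lower bounds on Bergman stuff}(2) (applied to $\Phi_n$, which by the \textsf{wHHR} hypothesis satisfies $\Levi(\phi_n\circ\Phi_n)\ge s^2 g_{\Euc}$) gives $\beta_n(0,0) = \hat{\Bfd}(0) \ge \epsilon > 0$. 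Hence $\abs{\wt{\alpha}_n(w)} \le C_{0,0}/\sqrt{\epsilon}$ on $r\Bb^m$ for every $r < 1$. Similarly, Proposition~\ref{prop:upper bounds on Bergman stuff} bounds all derivatives $\partial^a_u\partial^b_{\bar w}\beta_n$ uniformly, so the family $\{\wt\alpha_n\}$ is equicontinuous (in fact bounded in $C^\infty_{\mathrm{loc}}$); by Arzel\`a--Ascoli and a diagonal argument we extract a subsequence converging locally uniformly (with all derivatives) to a smooth $\wt\alpha$.

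Finally, to see $\wt\alpha(0)\neq 0$: evaluate at $w=0$, where $\wt\alpha_n(0) = \beta_n(0,0)/\sqrt{\beta_n(0,0)} = \sqrt{\beta_n(0,0)} = \sqrt{\hat{\Bfd}(0)} \ge \sqrt{\epsilon}$, using again Theorem~\ref{thm:lower bounds on Bergman stuff}(2). (One should also note $\wt\alpha_n(0)$ is a positive real number here, since $\beta_n(0,0) = \Bfd_\Omega(\zeta_n)$ up to the positive Jacobian factor $\abs{\det\Phi_n'(0)}^2$, which is itself bounded below away from $0$ by Lemma~\ref{lem:bounds on Phi_n at 0} combined with the upper bound there — actually the cleanest route is: $\beta_n(0,0) = \abs{\det\Phi_n'(0)}^2\,\hat{\Bfd}_\Omega(\zeta_n)$ is manifestly positive, and Theorem~\ref{thm:lower bounds on Bergman stuff} gives it a uniform positive lower bound.) Passing to the limit, $\wt\alpha(0) = \lim_n \wt\alpha_n(0) \ge \sqrt{\epsilon} > 0$.

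The main obstacle I expect is purely bookkeeping: correctly matching the form-valued Bergman kernel $\Bf_\Omega$ and its pullback under $\Phi_n$ against the scalar functions $\beta_n$ of Section~\ref{sec:bounding bergman in local coordinates}, keeping track of the $\det(\Phi_n'(w))$ Jacobian and the $d\bar z^1\wedge\cdots\wedge d\bar z^q$ contraction in the definition of $\wt\alpha_n$, and making sure the normalization constant $\sqrt{\Bf_\Omega(\zeta_n,\zeta_n)}$ is the same quantity as $\sqrt{\beta_n(0,0)}$ up to the Jacobian factors. There is no analytic difficulty beyond the uniform Bergman estimates already imported from~\cite{WY2020}; the content is entirely in the identification plus the two citations to Theorem~\ref{thm:lower bounds on Bergman stuff} and Proposition~\ref{prop:upper bounds on Bergman stuff}.
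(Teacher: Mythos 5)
Your proposal contains a genuine error that, once fixed, changes the structure of the argument considerably: the identification
\[
\wt{\alpha}_n(w) \;=\; \frac{\beta_n(w,0)}{\sqrt{\beta_n(0,0)}}
\]
is false unless $q=m$, and here $q<m$. Unwinding the definition, $\Phi_n^*(d\bar{z}^1\wedge\cdots\wedge d\bar{z}^q)$ is a $(0,q)$-form on $\Bb^m$ involving many monomials $d\bar{w}^J$, and the contraction $\ip{\,\cdot\,, d\bar{w}^1\wedge\cdots\wedge d\bar{w}^q}$ picks out only the coefficient of $d\bar{w}^1\wedge\cdots\wedge d\bar{w}^q$. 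Writing $J_n(w):=\ip{\Phi_n^*(d\bar z^1\wedge\cdots\wedge d\bar z^q),\,d\bar w^1\wedge\cdots\wedge d\bar w^q}$, one finds $\overline{J_n(w)}=\det\bigl[\partial(\Phi_n)_j/\partial w^k(w)\bigr]_{1\le j,k\le q}$, the determinant of the upper-left $q\times q$ block of $\Phi_n'(w)$, and the correct factorization is
\[
\wt{\alpha}_n(w)\;=\;f_n(w)\,J_n(w),\qquad f_n(w)=\det\bigl(\Phi_n'(w)\bigr)\frac{\Bf_\Omega(\Phi_n(w),\zeta_n)}{\sqrt{\Bf_\Omega(\zeta_n,\zeta_n)}},
\]
with $f_n(w)$ equal to $\beta_n(w,0)/\sqrt{\beta_n(0,0)}$ only up to a unimodular constant. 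Your argument bounds $f_n$ correctly (this part does follow from Proposition~\ref{prop:upper bounds on Bergman stuff} and Theorem~\ref{thm:lower bounds on Bergman stuff}), but it drops $J_n$ entirely.

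This is not bookkeeping: the factor $J_n$ is where the analytic variety in $\partial\Omega$ actually enters. Bounding $\abs{J_n(0)}$ away from zero uniformly requires the block structure $\Phi_n'(0)=\begin{pmatrix}L_n & * \\ 0 & *\end{pmatrix}$ coming from the arranged identity $\Phi_n'(0)(\Cb^q\times\{0\})=\psi'(0)\Cb^q$, together with Lemma~\ref{lem:bounds on Phi_n at 0}, which bounds $L_n$ (and only $L_n$, via its action on $\Cb^q\times\{0\}$) from below. In your parenthetical remark you cite Lemma~\ref{lem:bounds on Phi_n at 0} to bound $\abs{\det\Phi_n'(0)}$ from below, but that lemma says nothing about the full determinant — it controls only the restriction of $\Phi_n'(0)$ to $\Cb^q\times\{0\}$ — and in any case that lower bound is neither needed nor available. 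What you actually need is $\inf_n\abs{\det L_n}>0$, which is precisely $\inf_n\abs{J_n(0)}>0$, and this is absent from your argument. Once $J_n$ is restored, your normal-families plan (Montel on the holomorphic sequence $f_n$ and on the holomorphic sequence $\overline{J_n}$, or Arzel\`a--Ascoli on the product) goes through and recovers the statement.
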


Delaying the proof of Lemma~\ref{lem:alphas converge} to the end of the section, we complete the proof of Theorem~\ref{thm:necessary in paper} by proving the following. 

\begin{lemma}\label{lem:mass_along_sequence} There exists a compact set $K \subset \Bb^m$ such that
\begin{align*}
\liminf_{n \rightarrow \infty} \int_{\Phi_n(K)} \norm{ h_n}^2 d\Leb > 0.
\end{align*}
Hence we have a contradiction with Equation~\eqref{eq:unif_estimate}.
\end{lemma}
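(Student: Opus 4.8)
The plan is to use the solution operator identity $\bar\partial h_n = \alpha_n$ together with the extremal characterization of the Bergman kernel and metric from Theorem~\ref{thm:formulas for Bergman stuff} to show that $h_n$ cannot become small near the boundary point $x$. First I would pull everything back by $\Phi_n$. Writing $h_n$ in the $\Phi_n$-coordinates and pairing against $d\bar w^1\wedge\dots\wedge d\bar w^q$, one gets a $(0,q-1)$-form $\wt h_n$ on $\Bb^m$ with $\bar\partial \wt h_n = \wt\alpha_n\, d\bar w^1\wedge\dots\wedge d\bar w^q$ on $\frac12\Bb^m$ (up to the holomorphic Jacobian factor $\det\Phi_n'$, which by Lemma~\ref{lem:bounds on Phi_n at 0} and Proposition~\ref{prop:upper bounds on Bergman stuff} is bounded above and below on $\frac14\Bb^m$, say). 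Meanwhile $\int_{\Phi_n(K)}\norm{h_n}^2 d\Leb$ controls, via the change of variables $z=\Phi_n(w)$ and the two-sided Jacobian bounds, the quantity $\int_K \norm{\wt h_n}^2 d\Leb$ up to a uniform constant. So it suffices to prove a uniform lower bound $\liminf_n \int_K \norm{\wt h_n}^2 d\Leb > 0$ for some fixed compact $K\subset\Bb^m$.

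Next I would exploit the fact that $\wt\alpha_n \to \wt\alpha$ locally uniformly with $\wt\alpha(0)\neq 0$ (Lemma~\ref{lem:alphas converge}). Pick a radius $r$ and a smooth cutoff $\eta$ supported in $r\Bb^m$ with $\eta\equiv 1$ near $0$, and test the equation $\bar\partial \wt h_n = c_n\,\wt\alpha_n\, d\bar w^1\wedge\dots\wedge d\bar w^q$ (with $c_n=\det\Phi_n'(w)^{-1}$, holomorphic, uniformly comparable to a constant) against a fixed $\bar\partial$-closed $(0,q)$-form built from $\eta$ and an appropriate holomorphic $(0,q-2)$-type object — more concretely, integrate $\wt h_n \wedge \bar\partial(\text{test form})$ by parts. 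The point is to produce a nonzero number independent of $n$ (in the limit) from the left side, forcing $\norm{\wt h_n}_{r\Bb^m}$ to be bounded below. The cleanest version: since $\bar\partial \wt h_n$ restricted to $r\Bb^m$ has a fixed nonzero limit, if $\wt h_n\to 0$ in $L^2(r\Bb^m)$ then $\bar\partial\wt h_n \to 0$ in the sense of distributions on $r\Bb^m$, contradicting $\bar\partial\wt h_n \to c\,\wt\alpha\,d\bar w^1\wedge\dots\wedge d\bar w^q$ with $\wt\alpha(0)\neq 0$. To run this honestly one needs that $\wt h_n$ is bounded in $L^2_{\rm loc}(\Bb^m)$; this comes from $\norm{h_n}_\Omega = \norm{S_q \alpha_n}_\Omega \leq \norm{S_q}\,\norm{\alpha_n}_\Omega = \norm{S_q}$ (uniform, since $S_q$ is bounded — indeed compact), changing variables, and the Jacobian lower bound, so that $\int_{r\Bb^m}\norm{\wt h_n}^2 d\Leb$ is uniformly bounded; then a subsequence of $\wt h_n$ converges weakly in $L^2(r\Bb^m)$, the distributional $\bar\partial$ passes to the weak limit, and the limit $\wt h$ satisfies $\bar\partial \wt h = c\,\wt\alpha\, d\bar w^1\wedge\dots\wedge d\bar w^q \not\equiv 0$, so $\wt h \not\equiv 0$ and hence $\liminf_n \int_{r\Bb^m}\norm{\wt h_n}^2 \geq \int_{r\Bb^m}\norm{\wt h}^2 > 0$. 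Setting $K=\overline{r\Bb^m}$ gives the lemma.

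I would then close the loop: $\int_{\Phi_n(K)}\norm{h_n}^2 d\Leb \gtrsim \int_K \norm{\wt h_n}^2 d\Leb$, whose $\liminf$ is positive, while $\Phi_n(K)$ is eventually contained in any prescribed neighborhood of $x$ in $\Omega$ (because $\Phi_n(0)=\zeta_n\to x$ and $\norm{\Phi_n'}\leq C_0$ uniformly by Cauchy estimates, so $\Phi_n(K)$ has uniformly bounded Euclidean diameter and converges to $\{x\}$). Since $x\in\partial\Omega$, for any compact $K_0\subset\Omega$ we have $\Phi_n(K)\cap K_0=\emptyset$ for all large $n$; applying Equation~\eqref{eq:unif_estimate} with $\epsilon$ smaller than the positive $\liminf$ yields the contradiction, completing the proof of Theorem~\ref{thm:necessary in paper}.

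The main obstacle is the bookkeeping in passing $\bar\partial$ through the weak $L^2$ limit of $\wt h_n$ while keeping careful track of the holomorphic Jacobian factors $\det\Phi_n'$ and the pairing $\ip{\Phi_n^*\alpha_n, d\bar w^1\wedge\dots\wedge d\bar w^q}$ — in particular, verifying that the pullback of the relation $\bar\partial h_n=\alpha_n$ really does give $\bar\partial \wt h_n$ equal to $\wt\alpha_n\,d\bar w^1\wedge\dots\wedge d\bar w^q$ up to the stated holomorphic factor on the relevant ball, and that the weak limit of $\wt h_n$ inherits this equation rather than something degenerate. The analytic input (Rellich-type weak compactness in $L^2$, distributional $\bar\partial$ is weakly closed) is standard; the geometric input (Jacobian bounds, $\Phi_n(K)\to\{x\}$) is already supplied by Lemma~\ref{lem:bounds on Phi_n at 0}, Proposition~\ref{prop:upper bounds on Bergman stuff}, and Cauchy estimates.
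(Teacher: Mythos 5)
Your core argument for the $\liminf > 0$ part is essentially the paper's, phrased distributionally: test $\det\Phi_n'(w)\,\Phi_n^*\alpha_n = \bar\partial\bigl[\det\Phi_n'(w)\,\Phi_n^*h_n\bigr]$ against a fixed compactly supported test form and conclude that the pullback of $h_n$ cannot tend to $0$ in $L^2$ on a fixed small ball. The paper does exactly this via integration by parts and Cauchy--Schwarz; your ``cleanest version'' is the same identity, and your alternative via weak $L^2$ compactness is also valid but unnecessary. One imprecision worth fixing: the object whose $\bar\partial$ you control is the full $(0,q-1)$-form $H_n := \det\Phi_n'(w)\,\Phi_n^*h_n$ with $\bar\partial H_n = \det\Phi_n'(w)\,\Phi_n^*\alpha_n$ as an identity of $(0,q)$-forms; $\bar\partial$ does not commute with the projection onto the $d\bar w^1\wedge\cdots\wedge d\bar w^q$ component, so ``pairing $h_n$ against $d\bar w^1\wedge\cdots\wedge d\bar w^q$'' does not produce a $(0,q-1)$-form satisfying the equation you wrote. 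The intended argument goes through once you instead test $\bar\partial H_n$ against $\chi_0\,d\bar w^1\wedge\cdots\wedge d\bar w^q$, which extracts exactly $\wt\alpha_n$.

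The genuine gap is your justification of the final disjointness $\Phi_n(K)\cap K' = \emptyset$. You claim that $\|\Phi_n'\| \leq C_0$ and $\Phi_n(0)=\zeta_n\to x$ imply $\Phi_n(K)$ ``converges to $\{x\}$.'' This is false: by Lemma~\ref{lem:bounds on Phi_n at 0}, $\|\Phi_n'(0)X\| \geq C^{-1}\|X\|$ for $X$ in the $q$-dimensional variety-tangent direction, so $\Phi_n(K)$ retains Euclidean extent bounded below in that direction and does not shrink to a point. A uniform diameter bound together with $\Phi_n(0) \to \partial\Omega$ does not force $\Phi_n(K)$ to eventually avoid a given compact $K' \subset\Omega$, since $K$ is chosen before $K'$ and $\diam\Phi_n(K)$ may well exceed $\dist(K',\partial\Omega)$. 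The paper closes this via pseudoconvexity (Corollary~\ref{cor:pseudoconvex}); equivalently, use Cauchy completeness of the Kobayashi distance (Corollary~\ref{cor:kob is cc}) with the estimate $d^K_\Omega(\zeta_n,\Phi_n(w)) \leq C\|w\|$ on $K$ from Theorem~\ref{thm:comp_to_kob}: completeness implies $d^K_\Omega(\zeta_n,K')\to\infty$ as $\zeta_n\to x\in\partial\Omega$, hence $\Phi_n(K)\cap K'=\emptyset$ for large $n$. Replace the Euclidean-diameter heuristic with one of these.
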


\begin{proof} Since $\wt{\alpha} \neq 0$, there exists a smooth compactly supported function $\chi_0 : \Bb^m \rightarrow \Cb$ such that 
\begin{align*}
0 \neq \int_{ \Bb^m} \wt{\alpha}(w) \overline{\chi_0(w)} d\Leb.
\end{align*}
Since $\wt{\alpha}_n$ converges uniformly to $\wt{\alpha}$ on the support of $\chi_0$ we have 
\begin{align*}
0 \neq \int_{ \Bb^m} \wt{\alpha}(w) \overline{\chi_0(w)} d\Leb = \lim_{n \rightarrow \infty} \int_{ \Bb^m} \wt{\alpha}_n(w) \overline{\chi_0(w)} d\Leb.
\end{align*}
Then by the definition of $\wt{\alpha}_n$,
\begin{align*}
0 \neq \lim_{n \rightarrow \infty} \int_{ \Bb^m} \ip{  \det\left( \Phi_n^\prime(w) \right)\Phi_n^* \alpha_n, \chi }d\Leb
\end{align*}
where $\chi:=\chi_0 d\bar{w}^1 \wedge \dots \wedge d\bar{w}^q$. 

Since 
\begin{align*}
\det\left( \Phi_n^\prime(w) \right)\Phi_n^* \alpha_n & = \det\left( \Phi_n^\prime(w) \right)\Phi_n^*\bar{\partial} h_n =  \det\left( \Phi_n^\prime(w) \right)\bar{\partial} \Phi_n^*h_n \\
& = \bar{\partial} \left[ \det\left( \Phi_n^\prime(w) \right)\Phi_n^*h_n\right],
\end{align*}
we then have 
\begin{align*}
0 < &\lim_{n \rightarrow \infty}\abs{\int_{ \Bb^m} \ip{  \bar{\partial} \left[\det\left( \Phi_n^\prime(w) \right)\Phi_n^*h_n \right],\chi} d\Leb}  = \lim_{n \rightarrow \infty} \abs{\int_{ \Bb^m} \ip{  \det\left( \Phi_n^\prime(w) \right)\Phi_n^*h_n , \vartheta\chi} d\Leb} \\
& \lesssim \liminf_{n \rightarrow \infty} \left(\int_{ \supp(\chi)} \abs{\det\left( \Phi_n^\prime(w) \right)}^2\norm{\Phi_n^* h_n }^2d\Leb\right)^{1/2}
\end{align*}
where $\vartheta$ is the formal adjoint of $\bar{\partial}$. By Cauchy's integral formulas, 
$$
\max_{w \in \supp(\chi)} \norm{\Phi_n^\prime(w)} \lesssim 1. 
$$
So
\begin{align*}
\norm{\Phi_n^* h_n|_w } \leq \norm{\Phi_n^\prime(w)}^{q-1}\norm{h_n|_{\Phi_n(w)} } \lesssim \norm{h_n|_{\Phi_n(w)} }
\end{align*}
for $w \in \supp(\chi)$. Hence 
\begin{align*}
0 &<  \liminf_{n \rightarrow \infty} \int_{ \supp(\chi)} \abs{\det\left( \Phi_n^\prime(w) \right)}^2\norm{h_n|_{\Phi_n(w)} }^2d\Leb\\
& = \liminf_{n \rightarrow \infty} \int_{ \Phi_n(\supp(\chi))}  \norm{h_n}^2 d\Leb.
\end{align*}
Thus the first part of the lemma is true with $K : = \supp(\chi)$. 

Next we show that the first part of the lemma is incompatible with Equation~\eqref{eq:unif_estimate}. In particular, by Equation~\eqref{eq:unif_estimate}, we can fix a compact set $K' \subset \Omega$ such that 
$$
\sup_{n \geq 0} \int_{\Omega \setminus K'} \norm{h_n}^2 d\Leb < \liminf_{n \rightarrow \infty} \int_{\Phi_n(K)} \norm{ h_n}^2 d\Leb.
$$
However, by Corollary~\ref{cor:pseudoconvex}, $\Omega$ is pseudoconvex. Then since $\Phi_n(0) = \zeta_n \rightarrow x \in \partial \Omega$, we have 
$$
\Phi_n(K) \cap K' = \emptyset
$$ 
for $n$ large. So we have a contradiction.

\end{proof}

\subsection{Proof of Lemma~\ref{lem:alphas converge}} Define functions $f_n, J_n : \Bb^m \rightarrow \Cb$ by 
\begin{align*}
f_n(w) =\det\left( \Phi_n^\prime(w) \right) \frac{\Bf_\Omega(\Phi_n(w), \zeta_n)}{\sqrt{\Bf_\Omega(\zeta_n, \zeta_n)}} 
\end{align*}
and
\begin{align*}
J_n(w) =\ip{ \Phi_n^*(d\bar{z}^1 \wedge \dots \wedge d\bar{z}^q), d\bar{w}^1 \wedge \dots \wedge d\bar{w}^q}.
\end{align*}
Then 
\begin{align*}
\wt{\alpha}_n(w) = f_n(w) J_n(w).
\end{align*}

We will analyze $f_n$ and $J_n$ separately. 

\begin{lemma}\
\begin{enumerate}
\item For any $\delta \in (0,1)$, 
\begin{align*}
\sup_{n \geq 1} \sup_{w \in \delta \Bb^m} \abs{f_n(w)} <+\infty. 
\end{align*}
\item $\inf_{n \geq 1} \abs{f_n(0)} > 0$. 
\end{enumerate}

\end{lemma}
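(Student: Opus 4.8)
The plan is to rewrite $\abs{f_n}$ in terms of the Bergman kernel of $\Omega$ pulled back through $\Phi_n$, and then feed this into the uniform estimates of Sections~\ref{sec:bounding bergman in local coordinates} and~\ref{sec:lower bounds on bergman stuff}. First I would introduce $\beta_n : \Bb^m \times \Bb^m \to \Cb$ by
\begin{align*}
(\Phi_n \times \Phi_n)^* \Bf_\Omega = \beta_n(u,w)\, du^1 \wedge \dots \wedge du^m \wedge d\bar{w}^1 \wedge \dots \wedge d\bar{w}^m,
\end{align*}
using the differential-form description of $\Bf_\Omega$, which on a domain agrees with $\Bf_\Omega(z,w)\, dz^1 \wedge \dots \wedge dz^m \wedge d\bar{w}^1 \wedge \dots \wedge d\bar{w}^m$. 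Pulling this form back and tracking Jacobian determinants gives
\begin{align*}
\beta_n(u,w) = \Bf_\Omega(\Phi_n(u), \Phi_n(w))\, \det \Phi_n'(u)\, \overline{\det \Phi_n'(w)}.
\end{align*}
Since $\zeta_n = \Phi_n(0)$ and $\Phi_n$ is an immersion, $\det \Phi_n'(0) \neq 0$ and $\beta_n(0,0) = \Bf_\Omega(\zeta_n,\zeta_n)\abs{\det \Phi_n'(0)}^2 > 0$; a one-line manipulation then shows
\begin{align*}
f_n(w) = \frac{\abs{\det \Phi_n'(0)}}{\overline{\det \Phi_n'(0)}} \cdot \frac{\beta_n(w,0)}{\sqrt{\beta_n(0,0)}},
\end{align*}
so that $\abs{f_n(w)} = \abs{\beta_n(w,0)}/\sqrt{\beta_n(0,0)}$ for every $w \in \Bb^m$.

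It then remains to bound the numerator above and the denominator below, uniformly in $n$. For the numerator, applying Proposition~\ref{prop:upper bounds on Bergman stuff} with $M = \Omega$, $\Phi = \Phi_n$, and $r = \delta$ produces a constant $C_1 = C_1(m,\delta)$, independent of $n$, with $\abs{\beta_n(w,0)} \leq C_1$ for all $w \in \delta\Bb^m$ (this is the $a=b=0$ case, using that $0 \in \delta\Bb^m$). For the denominator, I would note that $\Omega$ is Stein by Corollary~\ref{cor:pseudoconvex}, and that since $\Bfd_\Omega = \Bf_\Omega(z,z)\, dz^1 \wedge \dots \wedge dz^m \wedge d\bar{z}^1 \wedge \dots \wedge d\bar{z}^m$, the function $\hat\Bfd$ associated to $\Phi_n$ in Theorem~\ref{thm:lower bounds on Bergman stuff}(2) is exactly $w \mapsto \beta_n(w,w)$. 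Applying Theorem~\ref{thm:lower bounds on Bergman stuff}(2) to the embedding $\Phi_n$ and the plurisubharmonic function $\phi_n$ (which satisfies $\Levi(\phi_n \circ \Phi_n) \geq s^2 g_{\Euc}$ on $\Bb^m$) yields $\epsilon = \epsilon(m,s) > 0$ with $\beta_n(0,0) = \hat\Bfd(0) \geq \epsilon$ for every $n$. Combining the two bounds gives $\abs{f_n(w)} \leq C_1/\sqrt{\epsilon}$ for all $w \in \delta\Bb^m$ and all $n$, which is statement (1); and $\abs{f_n(0)} = \sqrt{\beta_n(0,0)} \geq \sqrt{\epsilon}$ for all $n$, which is statement (2).

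The only delicate part is the bookkeeping: correctly tracking the Jacobian determinant factors introduced by $\Phi_n$, identifying $\abs{f_n}$ with $\abs{\beta_n(\cdot,0)}/\sqrt{\beta_n(0,0)}$, and matching the diagonal values $\beta_n(w,w)$ with the pulled-back diagonal Bergman kernel $\hat\Bfd$. Once these identities are pinned down, the result follows immediately from the two already-established uniform estimates, and there is no loss of uniformity in $n$ because the relevant constants depend only on $m$, $\delta$, and $s$.
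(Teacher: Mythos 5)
Your proof is correct and follows essentially the same route as the paper: both introduce $\beta_n(u,w) = \Bf_\Omega(\Phi_n(u),\Phi_n(w))\det\Phi_n'(u)\overline{\det\Phi_n'(w)}$, identify $\abs{f_n(w)} = \abs{\beta_n(w,0)}/\sqrt{\beta_n(0,0)}$ up to a unimodular factor, bound the numerator via Proposition~\ref{prop:upper bounds on Bergman stuff}, and bound the denominator from below via Theorem~\ref{thm:lower bounds on Bergman stuff} applied to the embeddings $\Phi_n$. The details check out, including the observation that $\hat\Bfd(0)$ for $\Phi_n$ equals $\beta_n(0,0)$ and that $0 \in \delta\Bb^m$ allows applying the upper-bound proposition with $r = \delta$.
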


\begin{proof} Define $\beta_n : \Bb^m \times \Bb^m \rightarrow \Cb$ by 
$$
\beta_n(z,w) = \Bf_\Omega(\Phi_n(z), \Phi_n(w)) \det \Phi_n'(z)\overline{ \det \Phi_n'(w)}. 
$$
Notice that with $\Phi = \Phi_n$, $\beta_n$ coincides with the function $\beta$ appearing in Proposition~\ref{prop:upper bounds on Bergman stuff} and $\beta_n(0,0)$ coincides with $\hat\Bfd(0)$ 
appearing in Theorem~\ref{thm:lower bounds on Bergman stuff}. Thus 
$$
\inf_{n \geq 1} \abs{\beta_{n}(0,0)} > 0
$$ 
and for any $\delta \in (0,1)$, 
\begin{align*}
\sup_{n \geq 1} \sup_{w \in \delta \Bb^m} \abs{\beta_{n}(w,0)} <+\infty.
\end{align*}

Since 
$$
f_n(w) = \frac{\beta_{n}(w,0)}{\sqrt{\beta_{n}(0,0)}}\left( \frac{\overline{\det \Phi_n'(0)}}{ \abs{\det \Phi_n'(0)} }\right)^{-1},
$$
the lemma follows. 

\end{proof}

\begin{lemma}\label{lem:basic_estimate} \ 
\begin{enumerate}
\item $\overline{J_n(w)} = \det\left[ \frac{\partial( \Phi_{n})_j}{\partial w^k}(w)\right]_{1 \leq j,k \leq q}$, in particular each $J_n$ is anti-holomorphic. 
\item For any $\delta \in (0,1)$, 
\begin{align*}
\sup_{n \geq 1} \sup_{w \in \delta \Bb^m} \abs{J_n(w)} <+\infty. 
\end{align*}
\item $\inf_{n \geq 1} \abs{J_n(0)} > 0$. 
\end{enumerate}
\end{lemma}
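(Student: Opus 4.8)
The plan is to compute the pullback $\Phi_n^*(d\bar{z}^1\wedge\dots\wedge d\bar{z}^q)$ by hand and read off $J_n$. Writing $\Phi_n=((\Phi_n)_1,\dots,(\Phi_n)_m)$, holomorphicity of $\Phi_n$ gives $\Phi_n^*(d\bar{z}^j)=\sum_{k=1}^m \overline{\partial(\Phi_n)_j/\partial w^k}\,d\bar{w}^k$. Wedging these $q$ forms, the coefficient of $d\bar{w}^1\wedge\dots\wedge d\bar{w}^q$ is exactly $\overline{\det\big[\partial(\Phi_n)_j/\partial w^k\big]_{1\le j,k\le q}}$: only the terms indexed by permutations of $\{1,\dots,q\}$ survive, and reordering the wedge factors introduces the sign of the permutation, which assembles them into a determinant (conjugated, since the entries of $\Phi_n^*d\bar z^j$ are the conjugated partials). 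The pointwise inner product against $d\bar{w}^1\wedge\dots\wedge d\bar{w}^q$ (which has coefficient $1$, so its conjugate is harmless) isolates precisely this coefficient, giving part (1): $\overline{J_n(w)}=\det\big[\partial(\Phi_n)_j/\partial w^k(w)\big]_{1\le j,k\le q}$. Since the $q\times q$ matrix of first partials of $\Phi_n$ depends holomorphically on $w$, $\overline{J_n}$ is holomorphic and so $J_n$ is anti-holomorphic.

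For part (2) I would observe that the $\Phi_n$ all map $\Bb^m$ into the bounded set $\Omega$, so Cauchy's integral formulas --- exactly as invoked in the proof of Lemma~\ref{lem:mass_along_sequence} --- give a bound on $\norm{\Phi_n'(w)}$ on $\delta\Bb^m$ that is uniform in $n$, for each fixed $\delta\in(0,1)$. In particular the entries of the $q\times q$ matrix $\big[\partial(\Phi_n)_j/\partial w^k(w)\big]_{1\le j,k\le q}$ are uniformly bounded there, and a determinant is a fixed polynomial in its entries, so $\overline{J_n(w)}$ is uniformly bounded on $\delta\Bb^m$; this is part (2).

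For part (3) I would use the normalization, set up earlier, that $\Phi_n'(0)\big(\Cb^q\times\{0_{\Cb^{m-q}}\}\big)=\Cb^q\times\{0_{\Cb^{m-q}}\}$. This forces the lower-left $(m-q)\times q$ block of the Jacobian $\Phi_n'(0)$ to vanish, so part (1) gives $\abs{J_n(0)}=\abs{\det A_n}$, where $A_n:=\big[\partial(\Phi_n)_j/\partial w^k(0)\big]_{1\le j,k\le q}$ is the matrix of the restriction $\Phi_n'(0)\big|_{\Cb^q\times\{0_{\Cb^{m-q}}\}}\colon\Cb^q\to\Cb^q$. By Lemma~\ref{lem:bounds on Phi_n at 0} there is $C>1$, independent of $n$, with $\tfrac1C\norm{Y}\le\norm{A_nY}\le C\norm{Y}$ for all $Y\in\Cb^q$, so every singular value of $A_n$ lies in $[1/C,C]$ and hence $\abs{\det A_n}\ge C^{-q}>0$ uniformly in $n$. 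The only step carrying any content is part (3), and the one small thing to notice there is that the rotation normalization makes the relevant $q\times q$ minor of $\Phi_n'(0)$ equal to the matrix of $\Phi_n'(0)$ restricted to the coordinate subspace $\Cb^q\times\{0\}$, after which Lemma~\ref{lem:bounds on Phi_n at 0} applies verbatim.
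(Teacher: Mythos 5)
Your proof is correct and follows essentially the same route as the paper: compute the pullback coefficient to obtain the $q\times q$ Jacobian determinant in part (1), invoke Cauchy's formulas and boundedness of $\Omega$ for part (2), and use the block-triangular structure of $\Phi_n'(0)$ from the rotation normalization together with Lemma~\ref{lem:bounds on Phi_n at 0} for part (3).
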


\begin{proof} Notice that 
\begin{align*}
\Phi_{n}^* d\bar{z}^j = \sum_{k=1}^m \overline{\frac{\partial (\Phi_{n})_j}{\partial w^k}} d\bar{w}^k
\end{align*}
and so part (1) follows from the definition of the determinant. For part (2), Cauchy's integral formulas imply that each $\frac{\partial (\Phi_{n})_j}{\partial w^k}$ is uniformly bounded on $\delta \Bb^m$ and hence by part (1) so is $J_n$. For part (3), recall that $\Phi_n'(0)$ maps $\Cb^{q} \times \{0_{\Cb^{m-q}}\}$ to $\Cb^{q} \times \{0_{\Cb^{m-q}}\}$. Hence 
$$
\Phi_n'(0) = \begin{pmatrix} L_n & ? \\ 0 & ? \end{pmatrix} 
$$
where $L_n : = \left[ \frac{\partial( \Phi_{n})_j}{\partial w^k}(0)\right]_{1 \leq j,k \leq q}$. Then Lemma~\ref{lem:bounds on Phi_n at 0} implies that 
\begin{equation*}
\abs{J_n(0)} = \abs{\det L_n}\geq C^{-q}. \qedhere
\end{equation*}
\end{proof} 

Applying Montel's theorem to the sequences $(f_n)_{n \geq 1}$ and $(\bar J_n)_{n \geq 1}$, we can pass to a subsequence so that $\wt{\alpha}_n$ converges locally uniformly on $\Bb^m$ to a smooth function $\wt{\alpha}$ and 
$$
\abs{\wt{\alpha}(0)} \geq \left( \inf_{n \geq 1} \abs{f_n(0)} \right) \left( \inf_{n \geq 1} \abs{J_n(0)} \right) > 0. 
$$

\bibliographystyle{alpha}
\bibliography{complex}

\begin{thebibliography}{AFGG24}

\bibitem[AFGG24]{AFGG2020}
Leandro Arosio, Matteo Fiacchi, S\'ebastien Gontard, and Lorenzo Guerini.
\newblock The horofunction boundary of a {G}romov hyperbolic space.
\newblock {\em Math. Ann.}, 388(2):1163--1204, 2024.

\bibitem[Ber10]{bern2010}
Bo~Berndtsson.
\newblock An introduction to things {$\overline\partial$}.
\newblock In {\em Analytic and algebraic geometry}, volume~17 of {\em IAS/Park
  City Math. Ser.}, pages 7--76. Amer. Math. Soc., Providence, RI, 2010.

\bibitem[BS99]{BS1999}
Harold~P. Boas and Emil~J. Straube.
\newblock Global regularity of the {$\overline\partial$}-{N}eumann problem: a
  survey of the {$L^2$}-{S}obolev theory.
\newblock In {\em Several complex variables ({B}erkeley, {CA}, 1995--1996)},
  volume~37 of {\em Math. Sci. Res. Inst. Publ.}, pages 79--111. Cambridge
  Univ. Press, Cambridge, 1999.

\bibitem[Cat83]{C1983}
David Catlin.
\newblock Necessary conditions for subellipticity of the {$\bar \partial
  $}-{N}eumann problem.
\newblock {\em Ann. of Math. (2)}, 117(1):147--171, 1983.

\bibitem[Cat89]{Catlin1989}
David~W. Catlin.
\newblock Estimates of invariant metrics on pseudoconvex domains of dimension
  two.
\newblock {\em Math. Z.}, 200(3):429--466, 1989.

\bibitem[CS01]{CS2001}
So-Chin Chen and Mei-Chi Shaw.
\newblock {\em Partial differential equations in several complex variables},
  volume~19 of {\em AMS/IP Studies in Advanced Mathematics}.
\newblock American Mathematical Society, Providence, RI; International Press,
  Boston, MA, 2001.

\bibitem[Dem96]{Dem1996}
Jean-Pierre Demailly.
\newblock {$L^2$} estimates for the $\bar \partial$-operator on complex
  manifolds.
\newblock Msanuscript on webpage at
  \url{https://www-fourier.ujf-grenoble.fr/~demailly/manuscripts/estimations_l2.pdf},
  1996.

\bibitem[DFW14]{DFW2014}
K.~Diederich, J.~E. Forn{\ae}ss, and E.~F. Wold.
\newblock Exposing points on the boundary of a strictly pseudoconvex or a
  locally convexifiable domain of finite 1-type.
\newblock {\em J. Geom. Anal.}, 24(4):2124--2134, 2014.

\bibitem[DGZ12]{DGZ2012}
Fusheng Deng, Qian Guan, and Liyou Zhang.
\newblock Some properties of squeezing functions on bounded domains.
\newblock {\em Pacific J. Math.}, 257(2):319--341, 2012.

\bibitem[DGZ16]{DGZ2016}
Fusheng Deng, Qi'an Guan, and Liyou Zhang.
\newblock Properties of squeezing functions and global transformations of
  bounded domains.
\newblock {\em Trans. Amer. Math. Soc.}, 368(4):2679--2696, 2016.

\bibitem[FK72]{FK1972}
G.~B. Folland and J.~J. Kohn.
\newblock {\em The {N}eumann problem for the {C}auchy-{R}iemann complex}.
\newblock Princeton University Press, Princeton, N.J.; University of Tokyo
  Press, Tokyo, 1972.
\newblock Annals of Mathematics Studies, No. 75.

\bibitem[Fra91]{F1991}
Sidney Frankel.
\newblock Applications of affine geometry to geometric function theory in
  several complex variables. {I}. {C}onvergent rescalings and intrinsic
  quasi-isometric structure.
\newblock In {\em Several complex variables and complex geometry, {P}art 2
  ({S}anta {C}ruz, {CA}, 1989)}, volume~52 of {\em Proc. Sympos. Pure Math.},
  pages 183--208. Amer. Math. Soc., Providence, RI, 1991.

\bibitem[FS98]{FS1998}
Siqi Fu and Emil~J. Straube.
\newblock Compactness of the {$\overline\partial$}-{N}eumann problem on convex
  domains.
\newblock {\em J. Funct. Anal.}, 159(2):629--641, 1998.

\bibitem[FS01]{FS2001}
Siqi Fu and Emil~J. Straube.
\newblock Compactness in the {$\overline\partial$}-{N}eumann problem.
\newblock In {\em Complex analysis and geometry ({C}olumbus, {OH}, 1999)},
  volume~9 of {\em Ohio State Univ. Math. Res. Inst. Publ.}, pages 141--160. de
  Gruyter, Berlin, 2001.

\bibitem[Gar87]{Gard1987}
Frederick~P. Gardiner.
\newblock {\em Teichm\"{u}ller theory and quadratic differentials}.
\newblock Pure and Applied Mathematics (New York). John Wiley \& Sons, Inc.,
  New York, 1987.
\newblock A Wiley-Interscience Publication.

\bibitem[Gri71]{G1971}
Phillip~A. Griffiths.
\newblock Complex-analytic properties of certain {Z}ariski open sets on
  algebraic varieties.
\newblock {\em Ann. of Math. (2)}, 94:21--51, 1971.

\bibitem[GW79]{GW1979}
R.~E. Greene and H.~Wu.
\newblock {\em Function theory on manifolds which possess a pole}, volume 699
  of {\em Lecture Notes in Mathematics}.
\newblock Springer, Berlin, 1979.

\bibitem[H\"65]{H1965}
Lars H\"{o}rmander.
\newblock {$L^{2}$} estimates and existence theorems for the {$\bar \partial $}
  operator.
\newblock {\em Acta Math.}, 113:89--152, 1965.

\bibitem[H\"07]{Hormander}
Lars H\"ormander.
\newblock {\em Notions of convexity}.
\newblock Modern Birkh\"auser Classics. Birkh\"auser Boston, Inc., Boston, MA,
  2007.
\newblock Reprint of the 1994 edition [of MR1301332].

\bibitem[Kli85]{K1985}
M.~Klimek.
\newblock Extremal plurisubharmonic functions and invariant pseudodistances.
\newblock {\em Bull. Soc. Math. France}, 113(2):231--240, 1985.

\bibitem[Kra92]{Krantz1992}
Steven~G. Krantz.
\newblock {\em Partial differential equations and complex analysis}.
\newblock Studies in Advanced Mathematics. CRC Press, Boca Raton, FL, 1992.
\newblock Lecture notes prepared by Estela A. Gavosto and Marco M. Peloso.

\bibitem[KZ16]{KZ2016}
Kang-Tae Kim and Liyou Zhang.
\newblock On the uniform squeezing property of bounded convex domains in
  {$\Bbb{C}^n$}.
\newblock {\em Pacific J. Math.}, 282(2):341--358, 2016.

\bibitem[LSY04]{LSY2004a}
Kefeng Liu, Xiaofeng Sun, and Shing-Tung Yau.
\newblock Canonical metrics on the moduli space of {R}iemann surfaces. {I}.
\newblock {\em J. Differential Geom.}, 68(3):571--637, 2004.

\bibitem[LSY05]{LSY2004b}
Kefeng Liu, Xiaofeng Sun, and Shing-Tung Yau.
\newblock Canonical metrics on the moduli space of {R}iemann surfaces. {II}.
\newblock {\em J. Differential Geom.}, 69(1):163--216, 2005.

\bibitem[McN94]{M1994}
Jeffery~D. McNeal.
\newblock Estimates on the {B}ergman kernels of convex domains.
\newblock {\em Adv. Math.}, 109(1):108--139, 1994.

\bibitem[MV15]{MV2015}
Jeffery~D. McNeal and Dror Varolin.
\newblock {$L^2$} estimates for the {$\overline\partial$} operator.
\newblock {\em Bull. Math. Sci.}, 5(2):179--249, 2015.

\bibitem[NA17]{NA2017}
N.~Nikolov and L.~Andreev.
\newblock Boundary behavior of the squeezing functions of {$\Bbb{C}$}-convex
  domains and plane domains.
\newblock {\em Internat. J. Math.}, 28(5):1750031, 5, 2017.

\bibitem[Ohs84]{Ohsawa1984}
Takeo Ohsawa.
\newblock Boundary behavior of the {B}ergman kernel function on pseudoconvex
  domains.
\newblock {\em Publ. Res. Inst. Math. Sci.}, 20(5):897--902, 1984.

\bibitem[Roy71]{R1971}
H.~L. Royden.
\newblock Remarks on the {K}obayashi metric.
\newblock In {\em Several complex variables, {II} ({P}roc. {I}nternat. {C}onf.,
  {U}niv. {M}aryland, {C}ollege {P}ark, {M}d., 1970)}, pages 125--137. Lecture
  Notes in Math., Vol. 185. Springer, Berlin, 1971.

\bibitem[Sib81]{Sib1981}
Nessim Sibony.
\newblock A class of hyperbolic manifolds.
\newblock In {\em Recent developments in several complex variables ({P}roc.
  {C}onf., {P}rinceton {U}niv., {P}rinceton, {N}. {J}., 1979)}, volume 100 of
  {\em Ann. of Math. Stud.}, pages 357--372. Princeton Univ. Press, Princeton,
  N.J., 1981.

\bibitem[Str10]{S2010}
Emil~J. Straube.
\newblock {\em Lectures on the {$\mathscr{L}^2$}-{S}obolev theory of the
  {$\overline{\partial}$}-{N}eumann problem}.
\newblock ESI Lectures in Mathematics and Physics. European Mathematical
  Society (EMS), Z\"{u}rich, 2010.

\bibitem[SY77]{SY1977}
Yum~Tong Siu and Shing~Tung Yau.
\newblock Complete {K}\"ahler manifolds with nonpositive curvature of faster
  than quadratic decay.
\newblock {\em Ann. of Math. (2)}, 105(2):225--264, 1977.

\bibitem[Wu67]{W1967}
H.~Wu.
\newblock Normal families of holomorphic mappings.
\newblock {\em Acta Math.}, 119:193--233, 1967.

\bibitem[WY20]{WY2020}
Damin Wu and Shing-Tung Yau.
\newblock Invariant metrics on negatively pinched complete {K}\"{a}hler
  manifolds.
\newblock {\em J. Amer. Math. Soc.}, 33(1):103--133, 2020.

\bibitem[Yau82]{Yau1982}
Shing~Tung Yau, editor.
\newblock {\em Seminar on {D}ifferential {G}eometry}, volume 102 of {\em Annals
  of Mathematics Studies}.
\newblock Princeton University Press, Princeton, N.J.; University of Tokyo
  Press, Tokyo, 1982.
\newblock Papers presented at seminars held during the academic year
  1979--1980.

\bibitem[Yeu09]{Y2009}
Sai-Kee Yeung.
\newblock Geometry of domains with the uniform squeezing property.
\newblock {\em Adv. Math.}, 221(2):547--569, 2009.

\bibitem[Zim21a]{Z2021}
Andrew Zimmer.
\newblock Compactness of the {$\overline\partial$}-{N}eumann problem on domains
  with bounded intrinsic geometry.
\newblock {\em J. Funct. Anal.}, 281(1):Paper No. 108992, 47, 2021.

\bibitem[Zim21b]{Z2018}
Andrew Zimmer.
\newblock Smoothly bounded domains covering finite volume manifolds.
\newblock {\em J. Differential Geom.}, 119(1):161--182, 2021.

\end{thebibliography}

\end{document}